\renewcommand{\epsilon}{\varepsilon}
\newcommand{\N}{\mathbb{N}}
\newcommand{\Z}{\mathbb{Z}}
\newcommand{\R}{\mathbb{R}}
\newcommand{\C}{\mathbb{C}}
\renewcommand{\P}{\mathbb{P}}
\newcommand{\Tor}{\mathbb{T}}
\renewcommand{\leq}{\leqslant}
\renewcommand{\geq}{\geqslant}
\newtheoremstyle{fancy}{}{}{\itshape}{}{\textbf\bgroup}{.\egroup}{ }{}
\newtheoremstyle{fancy2}{}{}{\rm}{}{\textbf\bgroup}{.\egroup}{ }{}
\newcounter{mtheorem}
\newtheorem{mtheorem}[mtheorem]{Theorem}
\setlist{leftmargin=*}
\numberwithin{equation}{section}
\numberwithin{equation}{section}
\theoremstyle{fancy}
\newtheorem{theorem}[equation]{Theorem}
\newtheorem{lemma}[equation]{Lemma}
\newtheorem{prop}[equation]{Proposition}
\newtheorem{corollary}[equation]{Corollary}
\newtheorem{obs}[equation]{Observation}
\theoremstyle{fancy2}
\newtheorem{definition}[equation]{Definition}
\newtheorem{ex}[equation]{Example}
\theoremstyle{remark}
\newtheorem{remark}[equation]{Remark}
\newtheorem*{remark*}{Remark}
\DeclareMathOperator{\im}{\textup{Im}}
\newcommand{\sorth}[1]{\textup{SO}(#1)}
\newcommand{\sunitary}[1]{\textup{SU$(#1)$}}
\newcommand{\sunitaryn}{\textup{SU$(n)$}}
\newcommand{\quat}[1]{\textup{Sp$(#1)$}}
\newcommand{\Hol}{\operatorname{Hol}}
\newcommand{\Sph}{\mathbb{S}}
\newcommand{\norm}[1]{\left\|#1\right\|}
\newcommand{\tsfrac}[2]{\textstyle\frac{#1}{#2}}
\newcommand{\half}{{\tsfrac{1}{2}}}
\newcommand{\bbz}{\mathbb{Z}}
\newcommand{\bbr}{\mathbb{R}}
\newcommand{\bbc}{\mathbb{C}}
\newcommand{\bbrp}{\mathbb{R}^{+}}
\newcommand{\ie}{\emph{i.e.} }
\newcommand{\eg}{\emph{e.g.} }
\newcommand{\Eg}{\emph{E.g.} }
\newcommand{\cf}{\emph{cf.} }
\newcommand{\aalg}{\mathfrak{a}}
\newcommand{\holda}[1]{C^{#1, \alpha}}
\newcommand{\holdad}[1]{C^{#1, \alpha}_{\delta}}
\newcommand{\harm}{\mathcal{H}}
\newcommand{\calr}{\mathcal{R}}
\newcommand{\calo}{\mathcal{O}}
\newcommand{\cala}{\mathcal{A}}
\newcommand{\anglen}{\theta}
\newcommand{\cyl}{\infty}
\newcommand{\dirac}{\slashed{\partial}}
\newcommand{\gen}[1]{\langle#1\rangle}
\newcommand{\contra}[1]{\textstyle\frac{\partial}{\partial #1}}
\newcommand{\pd}[2]{\frac{\partial #1}{\partial #2}}
\DeclareMathOperator{\iso}{Iso}
\newcommand{\bd}{\rm bd}
\newcommand{\acyl}{{\rm ACyl}\xspace}
\newcommand{\db}{\bar\partial}
\newcommand{\oM}{\hspace*{0pt} \mkern 4mu\overline{\mkern-4mu M\mkern-1mu}\mkern 1mu}
\newcommand{\oD}{\hspace*{0pt} \mkern 4mu\overline{\mkern-4mu D\mkern-1mu}\mkern 1mu}
\newcommand{\tM}{\tilde M}
\newcommand{\tX}{\tilde X}
\newcommand{\tomega}{\tilde{\omega}}
\begin{document}

\title{Asymptotically cylindrical Calabi-Yau manifolds}
\author[M.~Haskins]{Mark~Haskins}
\address{Department of Mathematics, Imperial College London, London SW7 2AZ, United Kingdom}
\email{m.haskins@imperial.ac.uk}
\author[H.-J.~Hein]{Hans-Joachim Hein}
\address{Department of Mathematics, University of Maryland, College Park, MD 20742--4015, USA}
\email{hein@umd.edu}
\author[J.~Nordstr\"om]{Johannes Nordstr\"om}
\address{Department of Mathematical Sciences, University of Bath, Bath BA2 7AY, United Kingdom}
\email{j.nordstrom@bath.ac.uk}
\date{\today}
\subjclass[2010]{53C25, 14J32}

\begin{abstract}
Let $M$ be a complete Ricci-flat K\"ahler manifold with one end and assume that this end converges at an exponential rate to $[0,\infty) \times X$
for some compact connected Ricci-flat manifold $X$.
We begin by proving general structure theorems for $M$; 
in particular we show that there is no loss of generality in assuming that $M$ is 
simply-connected and irreducible with $\Hol(M) = {\rm SU}(n)$, where 
$n$ is the complex dimension of $M$. If $n > 2$ we then show that there exists a projective orbifold $\oM$ and a divisor $\oD \in |{-K_{\oM}}|$ with torsion normal bundle such that $M$ is biholomorphic to $\oM \setminus \oD$, thereby
settling a long-standing question of Yau in the asymptotically cylindrical setting. 
We give examples where $\oM$ is not smooth: the existence of such examples appears not to have been noticed previously.
Conversely, for any such pair $(\oM, \oD)$ we give a short and self-contained proof of the existence and uniqueness of 
exponentially asymptotically cylindrical Calabi-Yau metrics on $\oM \setminus \oD$. 
\end{abstract}
\maketitle

\thispagestyle{empty}

\section{Introduction}\label{s:intro}

\subsection*{Background and overview} In one of their foundational papers on complete Ricci-flat
K\"ahler metrics \cite[Cor 5.1]{tianyau90} Tian and Yau proved the existence of such metrics with linear volume growth 
on smooth noncompact quasi-projective varieties of the form $M=\oM \setminus \oD$,
where $\oM$ is a smooth projective variety that fibres over a Riemann surface with generic fibre $\oD$ a connected smooth and reduced
anticanonical divisor.
In fact, the estimates of \cite{tianyau90} imply that the end of $M$ is bi-Lipschitz equivalent to one half of a metric cylinder $M_\infty = \R \times X$
where $X= \Sph^{1} \times \oD$ and $\oD$ is endowed with a Ricci-flat K\"ahler metric that exists because
$c_{1}(\oD) =0$ by adjunction \cite{yau78}.

The current paper has two principal goals:
\begin{enumerate}
\item
\label{goal1}
To give a short and self-contained proof of a generalised and refined
version of the Tian-Yau theorem; 
as one consequence of this generalisation we obtain asymptotically cylindrical Ricci-flat
K\"ahler metrics whose cross-section $X$ no longer takes the split form $\Sph^{1} \times \oD$; one
of our refinements is to establish the exponential convergence of $M$ to $[0,\infty) \times X$.
\item
\label{goal2}
To show that every complete Ricci-flat K\"ahler manifold of complex dimension $n > 2$ that is exponentially asymptotic
to a half-cylinder $[0,\infty) \times X$ 
arises from our generalisation of the Tian-Yau construction in \ref{goal1}.
\end{enumerate}

The exponential convergence in \ref{goal1} is important because 
it is used in an essential way in the so-called \emph{twisted connected sum} construction of 
compact Riemannian $7$-manifolds with holonomy group $G_{2}$ 
\cite{chnp1,chnp2,kovalev03}, 
first suggested by Donaldson and then pioneered 
by Kovalev in \cite{kovalev03}.
At present no complete proof of the existence of exponentially asymptotically cylindrical Ricci-flat K\"ahler metrics exists in the literature; \cf Section \ref{s:existence}. 
Moreover, the original existence proof with bi-Lipschitz control due to Tian and Yau \cite{tianyau90} is difficult and very general; we will show that the 
asymptotically cylindrical case allows for a short and direct treatment, bypassing most of the technicalities of \cite{tianyau90}.

\ref{goal2} fits naturally into the broader framework of complex analytic compactifications of 
complete Ricci-flat K\"ahler manifolds---a topic Yau 
raised in his 1978 ICM Address \cite[p. 246, 2nd question]{yau:ICM78}.
Indeed, under the assumption of finite topology
all currently known constructions of such manifolds yield examples that are 
complex analytically compactifiable in Yau's sense.
In other settings 
some compactification results have been proven 
by studying the section ring of the (anti-)canonical bundle
---in \cite{mok-zhong} for ${\rm Ric} < 0$ with finite volume  and
 in  \cite{mok} for ${\rm Ric} > 0$ with Euclidean volume growth---but we are not aware of any such results in the Ricci-flat case
even under additional hypotheses.

In this paper we develop a new approach to constructing compactifications by
exploiting detailed asymptotics for the metric at infinity. To state the basic idea,
let $M$ be a complete Ricci-flat K\"ahler manifold with one end that converges at an exponential rate to one half of a metric cylinder $M_{\cyl} = \R \times X$. We begin by proving that after passing to a finite cover and splitting off compact factors we can assume that $M$ is simply-connected of holonomy SU$(n)$ with $n = \dim_\C M$. If $n > 2$, we will then prove that $M_\cyl$ has a finite cover that splits as a K\"ahler product $\R \times \Sph^{1}\times D$,
where $D$ is compact Ricci-flat  K\"ahler. The cylinder $M_{\infty}$ now admits a natural orbifold
compactification, so we can try to use the fact that $M$ is asymptotic to $M_{\infty}$ 
to build an orbifold compactification of $M$. This is indeed possible but requires significant technical work: see Section \ref{sec:cptfy}.

\subsection*{Basic terminology} Before proceeding to a more detailed description of the main results and the organisation of the paper, we begin with a few basic definitions and remarks.

\begin{definition}\label{d:ACyl}
A complete Riemannian manifold $(M, g)$ is called \emph{asymptotically cylindrical}
(\emph{ACyl}) if there exist a bounded domain $U \subset M$, a closed 
(not necessarily connected) Riemannian manifold $(X, h)$, and a diffeomorphism $\Phi: [0,\infty) \times X \to M \setminus U$ such that $|\nabla^k(\Phi^*g - g_\infty)| = O(e^{-\delta t})$ with respect to the product metric $g_\infty \equiv dt^2 + h$ for some $\delta > 0$ and all $k \in \N_0$. Here $t$ denotes projection onto the $[0,\infty)$ factor; we often extend the function $t \circ \Phi^{-1}$ by zero and refer to this extension as a \emph{cylindrical coordinate function} on $M$.
We call the connected components of $M_{\infty}\equiv \R \times X$ endowed with the
product metric $g_\infty$ the \emph{asymptotic cylinders} (or sometimes the
\emph{cylindrical ends}), $(X,h)$ the \emph{cross-section}, and $\Phi$ the \emph{ACyl diffeomorphism} or 
\emph{ACyl map} of the ACyl manifold $(M,g)$.
\end{definition}

We will often suppress the map $\Phi$ in our notation, or tacitly replace it by $\Phi \circ [(t,x) \mapsto (t+t_0, x)]$ for some large constant $t_0$.
Also, it will be irrelevant whether we measure norms of tensors on $M \setminus U$
with respect to $g$ or $g_{\infty}$. Finally, we remark that exponential asymptotics are a priori more natural than 
polynomial or even weaker ones because solutions to linear elliptic equations on cylinders tend to behave exponentially. The Calabi-Yau condition is not linear, but we obtain a consistent theory within the exponential setting; see also
the Concluding Remarks at the end of this section.

\begin{remark}\label{r:basic}
We will mainly be interested in ACyl manifolds that are \emph{Ricci-flat}. In this case:
\label{R:acyl:rf}
\begin{enumerate}
\item
\label{it:split}
$M$ has only a single end except when it is isometric to a product cylinder. This is an immediate consequence of the Cheeger-Gromoll splitting theorem \cite[Thm 2]{cheeger71}, and holds even if we assume only ${\rm Ric} \geq 0$.
From now on in this remark, assume $M$ is not a product cylinder.
\item
\label{it:toruscover}
The end $M_{\infty}$ is a Ricci-flat cylinder, so the
cross-section $X$ is compact connected and Ricci-flat.
We recall a basic structure result: there exists a finite Riemannian covering \mbox{$\Tor \times X' \rightarrow X$} where $\Tor$ is a flat torus with $\dim \Tor \ge b^{1}(X)$ and $X'$ is compact simply-connected  and Ricci-flat
\mbox{\cite[Thm 4.5]{fischer}}. 
This is deduced from a more general theorem for ${\rm Ric} \geq 0$
\cite[Thm~3]{cheeger71},  but uses the inequality ${\rm Ric} \leq 0$ in an
essential way to ascertain that all Killing fields are parallel.
\end{enumerate}
\end{remark}

We also need to recall some terminology related to holonomy groups. 
We say that $(M,g)$ is \emph{locally irreducible} if the representation of the
restricted holonomy group $\Hol_0(M)$ on the tangent space of any point of $M$ is irreducible; by de Rham's theorem
this is equivalent to $M$ being locally irreducible in the sense of isometric product decompositions.
We call $(M^{2n},g)$ \emph{Calabi-Yau} if $\Hol(M) \subseteq \sunitaryn$ and \emph{hyper-K\"ahler} if $n$ is even and $\Hol(M) \subseteq \quat{\tfrac{n}{2}} \subset \sunitaryn$. The Calabi-Yau condition implies that $M$ is Ricci-flat Kähler. Conversely, if $M$ is Ricci-flat Kähler then $\Hol(M) \subseteq {\rm U}(n)$ and 
$\Hol_0(M) \subseteq \sunitaryn$, so if $M$ is simply-connected
then it is Calabi-Yau, and if additionally $M$ is irreducible then---by Berger's classification---either $\Hol(M) = \sunitaryn$, or $n$ is even and $\Hol(M) = \quat{\tfrac{n}{2}}$.

A final point of notation: $\Sph^k$ will denote a round $k$-sphere and $\Tor^k$ a flat $k$-torus (not necessarily a product of $k$ circles). Thus $\Sph^1 = \Tor^1$ is a circle but we do not specify its radius. However, we always identify $\Sph^1 = \R/2\pi\Z$ topologically and denote the resulting angular coordinate on $\Sph^1$ by $\theta$.

\subsection*{Killing the fundamental group} Our first main result gives an ACyl analogue of the structure theorem
for compact Ricci-flat manifolds of Remark \ref{R:acyl:rf}\ref{it:toruscover}. 
This again follows from
 a structure result for (ACyl) manifolds with nonnegative Ricci curvature: Theorem \ref{thm:acyl_nonneg}.

\begin{mtheorem}
\label{thm:acyl_structure}
Every Ricci-flat {\rm ACyl} manifold has a finite normal
covering space that splits as the isometric product of a flat torus and
a simply-connected Ricci-flat {\rm ACyl} manifold.
\end{mtheorem}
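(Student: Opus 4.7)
The plan is to mimic the compact Fischer--Wolf strategy recalled in Remark~\ref{R:acyl:rf}(\ref{it:toruscover}): first extract a flat splitting on a finite cover from the nonnegative-Ricci analogue (Theorem~\ref{thm:acyl_nonneg}), then promote the flat factor to a compact torus using the Ricci-flat condition $\mathrm{Ric}\leq 0$ via the Bochner identity.

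If $(M,g)$ is itself isometric to a product cylinder $\R\times X$, the argument is direct: apply Remark~\ref{R:acyl:rf}(\ref{it:toruscover}) to the compact Ricci-flat cross-section $X$ to obtain a finite normal cover $\Tor^k\times X'\to X$ with $X'$ simply-connected. Regrouping the factors produces
\[\Tor^k\times(\R\times X')\;\longrightarrow\;\R\times X\;=\;M,\]
and the second factor $\R\times X'$ is a simply-connected Ricci-flat ACyl manifold, as required.

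From now on assume $M$ is not a product cylinder, so by Remark~\ref{R:acyl:rf}(\ref{it:split}) it has exactly one end. Invoking Theorem~\ref{thm:acyl_nonneg} produces a finite normal cover $\widehat M\to M$ whose universal cover $\widetilde M$ splits isometrically as $\widetilde M\cong\R^k\times\widetilde N$, with $\widetilde N$ simply-connected and carrying no nontrivial parallel vector field. By the uniqueness of the de~Rham decomposition, the deck action of $\pi_1(\widehat M)$ on $\widetilde M$ preserves this splitting and decomposes as
\[\rho_1\colon\pi_1(\widehat M)\to\mathrm{Iso}(\R^k)=O(k)\ltimes\R^k,\qquad \rho_2\colon\pi_1(\widehat M)\to\mathrm{Iso}(\widetilde N).\]
The Ricci-flatness enters now through $\mathrm{Ric}\leq 0$: by the Bochner identity every Killing field on $\widehat M$ is parallel, so the connected component of $\mathrm{Iso}(\widetilde N)$ seen by $\rho_2$ is trivial and the rotational part of $\rho_1$ (modulo translations) is forced to be finite. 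Intersecting the kernels of the resulting finite quotients yields a finite-index normal subgroup $\Gamma\leq\pi_1(\widehat M)\leq\pi_1(M)$ that acts on $\R^k$ by a translation lattice $\Lambda$ and trivially on $\widetilde N$. The corresponding finite normal cover of $M$ is then
\[\widetilde M/\Gamma\;\cong\;(\R^k/\Lambda)\times\widetilde N\;\cong\;\Tor^k\times\widetilde N,\]
with $\widetilde N$ simply-connected Ricci-flat ACyl, which is the required splitting.

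The main obstacle is the upgrade step. Unlike in the compact Ricci-flat setting, the deck action on $\R^k$ is not cocompact, so Bieberbach's theorem is unavailable as a black box; finiteness of the rotational part of $\rho_1$ (and of the image of $\rho_2$) has to be extracted by combining the Bochner identity with the ACyl geometry, and in particular by relating the flat factor $\R^k$ of $\widetilde M$ to the torus factor produced by Remark~\ref{R:acyl:rf}(\ref{it:toruscover}) applied to the compact cross-section along the cylindrical end, where the classical Bieberbach-type argument does apply.
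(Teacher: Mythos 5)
Your overall strategy---split the universal cover via Theorem~\ref{thm:acyl_nonneg}, then cut $\pi_1(M)$ down to a finite-index normal subgroup acting by a lattice of translations on the $\R^k$ factor and trivially on the other factor---is the same route the paper takes (Corollary~\ref{t:acyl_top}). But the decisive step is not actually carried out, and the one sentence where you claim to carry it out is wrong as stated. The Bochner identity (Proposition~\ref{p:acyl:hodge}\ref{it:killing}) only gives that Killing fields are parallel, hence that $\iso(\widetilde N)$ is \emph{discrete}; it does not make the image of $\rho_2$ finite (a discrete group can have infinite discrete image), and it says nothing about the rotational part of $\rho_1$ being finite---a discrete subgroup of $\iso(\R^k)$ can have infinite rotational part (e.g.\ the group generated by a screw motion through an irrational angle). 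The paper closes the first gap by noting that $\iso(\widetilde N)$ has bounded orbits because $\widetilde N$ has a single end, hence is compact, and compact plus discrete gives finite. For the second gap one needs the image of $\pi_1(M)$ in $\iso(\R^k)$ to be not only discrete (which follows from compactness of $\iso(\widetilde N)$ together with discreteness of $\pi_1(M)$) but \emph{cocompact}, so that the first Bieberbach theorem yields both finiteness of the rotational part and full rank of the translation lattice $\Lambda$; without full rank, $(\R^k/\Lambda)\times\widetilde N$ retains a Euclidean factor and is not of the required form. Your last paragraph correctly identifies cocompactness as the missing ingredient, but naming it is not supplying it: in the paper it is extracted from the proof of Theorem~\ref{thm:acyl_nonneg}, where $\pi_1(M)$ is shown to act cocompactly on the cross-section $\widetilde X=\R^k\times X'$ with $X'$ compact, compatibly with the splitting of $\widetilde M$. (Alternatively, one can argue a posteriori that a finite cover of $M$ is ACyl, hence has linear volume growth, which rules out a surviving $\R$-factor.) As written, the proof therefore has a genuine hole exactly at the point you flag.

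Two smaller inaccuracies: Theorem~\ref{thm:acyl_nonneg} applies directly to $M$ and splits its universal cover---it does not first produce an intermediate finite normal cover $\widehat M$, and its statement does not assert that $\widetilde N$ has no parallel vector fields (though this follows, since a parallel field would split a line off the single-ended ACyl manifold $\widetilde N$). Also, its first alternative is that $M$ is a $\Z_2$-quotient of a cylinder, not only a cylinder; that case must be dispatched too, though it goes exactly as your cylinder case via Remark~\ref{r:basic}\ref{it:toruscover}.
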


In particular, if $M$ is ACyl Ricci-flat {K\"ahler}, then $M$ has a finite normal covering space $\tilde{M}$ such that $\tilde{M} = M' \times N$, where $M'$ is simply-connected irreducible ACyl, $N$ is compact, and both $M'$ and $N$ are K\"ahler except in the trivial case where $M' = \R$. Thus, for almost all purposes we can assume without any loss that the full holonomy of $M$ is either SU$(n)$ or Sp$(\frac{n}{2})$ (some care
must be taken \eg in establishing projectivity 
of complex analytic compactifications in Theorem \ref{t:cptfy} because of the potential presence of  non-projective compact factors in the splitting above.)

\subsection*{Holonomy and the asymptotic cylinder} We will assume from now on that our Ricci-flat ACyl manifold $M$ is K\"ahler of complex dimension $n$.
Our next main result---Theorem \ref{T:acyl:holo}, to be proved in Section \ref{s:acyl_holonomy}---shows that $\R \times X$ being the asymptotic cylinder of a Ricci-flat K\"ahler manifold imposes strong additional restrictions on $X$ beyond $\R \times X$ being Ricci-flat K\"ahler; see \ref{T:acyl:holo}\ref{cylstruc}. In particular, $b^1(X) = 1$ if $n > 2$. This is consistent with \ref{T:acyl:holo}\ref{nohk} because ${\rm Hol}(M) = {\rm Sp}(\frac{n}{2})$ implies that $b^1(X) \geq 3$. However, we will prove Theorem \ref{T:acyl:holo} by treating the two cases ${\rm Hol}(M) = {\rm Sp}(\frac{n}{2})$ and $\Hol(M)$ $=$ $\sunitaryn$ in parallel, using the same type of argument to derive restrictions on $X$ in both cases.

\begin{mtheorem}
\label{T:acyl:holo}
Let $M$ be simply-connected irreducible {\rm ACyl} Calabi-Yau with $n = \dim_\C M > 2$.
\begin{enumerate}
\item\label{nohk}
$M$ is not hyper-K\"ahler, or in other words $\Hol(M) = \sunitaryn$.
\item\label{cylstruc}
There exists a compact Calabi-Yau manifold $D$ with a K\"ahler isometry $\iota$ of finite order $m$ such that the cross-section $X$ of $M$ can be written as $X = (\Sph^1 \times D)/\langle \iota\rangle$,
where $\iota$ acts on the product via $\iota(\theta,x) = (\theta + \frac{2\pi}{m}, \iota(x))$. Moreover, $\iota$ preserves the holomorphic volume form on $D$ but no other holomorphic forms of positive degree. In particular, $b^1(X) = 1$.
\end{enumerate}
\end{mtheorem}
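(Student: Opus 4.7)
The plan is to analyze the asymptotic cylinder $M_\cyl = \R\times X$ using the fact that the complex structure $J$ and the holomorphic volume form $\Omega$ of $M$ pass to the limit as parallel tensors on $M_\cyl$. The starting observation is that $V:=J_\cyl\partial_t$ is a parallel vector field on $M_\cyl$ pointwise orthogonal to $\partial_t$, so $V$ restricts to a parallel unit vector field on the cross-section $X$. (In the hyper-K\"ahler case $\Hol(M)=\quat{n/2}$, the additional complex structures give two further parallel unit vector fields $I\partial_t$ and $K\partial_t$ on $X$, orthogonal to $V$ and to each other.)

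Next I would produce the cover $\Sph^1\times D$. By Fischer's theorem \cite[Thm 4.5]{fischer} recalled in Remark \ref{R:acyl:rf}\ref{it:toruscover}, $X$ admits a finite normal Riemannian cover $\Tor^k\times Y$ with $Y$ compact simply-connected Ricci-flat, and $V$ corresponds to a constant vector in the $\Tor^k$ factor. Passing to a further finite cover to rationalise $V$ against the lattice, I split $\Tor^k=\Sph^1\times\Tor^{k-1}$ with the first factor generated by $V$, producing a finite Riemannian cover $\tilde X=\Sph^1\times D$ with $D:=\Tor^{k-1}\times Y$ compact of complex dimension $n-1$. Since $V^\perp\subset TM_\cyl$ is $J_\cyl$-invariant, the complex structure descends to $D$, and the pullback of the asymptotic limit of $\Omega$ to $\R\times\tilde X=\C\times D$ decomposes as $(dt+id\theta)\wedge\Omega_D$, exhibiting $D$ as Calabi-Yau. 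A short calculation shows that any isometry of $\Sph^1\times D$ preserving $\partial_\theta$ has product form $(\theta,x)\mapsto(\theta+c,\phi(x))$, so the deck group $\Gamma$ of $\tilde X\to X$ sits inside the group of pairs $(c,\phi)$ with $c\in\Sph^1$ and $\phi$ a K\"ahler isometry of $D$ preserving $\Omega_D$. Quotienting $D$ by the (normal, freely acting) kernel of $\Gamma\to\Sph^1$ makes $\Gamma$ cyclic of some order $m$, with generator $\iota$ acting as $(\theta,x)\mapsto(\theta+\frac{2\pi}{m},\iota(x))$ for a K\"ahler isometry of $D$ of order exactly $m$.

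The crux is to show that $\iota$ preserves no nonzero holomorphic form on $D$ of positive degree other than $\Omega_D$. I would argue by contradiction: if $\eta$ is such a form of degree $p$ with $0<p<n-1$, then Bochner on the compact Ricci-flat K\"ahler $D$ forces $\eta$ to be parallel, and $\iota$-invariance lets it descend to a parallel $(p,0)$-form $\eta_\cyl$ on $M_\cyl$ that does \emph{not} involve the unbounded direction $dz=dt+id\theta$. The plan is to use ACyl Fredholm theory for the Laplacian on weighted H\"older spaces to produce a bounded harmonic $(p,0)$-form $\eta_M$ on $M$ asymptotic to $\eta_\cyl$; then the Bochner identity integrated over $M$, with boundary terms at infinity killed by the exponential decay of the correction, forces $\eta_M$ to be parallel. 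This contradicts $\Hol(M)=\sunit{n}$, which admits no parallel $(p,0)$-form in degrees $0<p<n$. The special case $p=1$ then yields $b^1(X)=1$, since every parallel $1$-form on $X$ beyond the one dual to $V$ corresponds to an $\iota$-invariant holomorphic $1$-form on $D$.

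Part \ref{nohk} follows from the same mechanism. If $\Hol(M)=\quat{n/2}$ then $n\ge 4$ is even, and the additional parallel vector fields $I\partial_t,K\partial_t$ on $X$ already force $b^1(X)\ge 3$, directly contradicting $b^1(X)=1$; equivalently, the parallel holomorphic symplectic form $\sigma$ of $M$ limits to a parallel $(2,0)$-form on $M_\cyl$ whose cover decomposition $dz\wedge\alpha+\beta$ involves nonzero $\iota$-invariant holomorphic forms $\alpha$ or $\beta$ on $D$ of degrees $1$ or $2$, which, since $\deg\Omega_D=n-1\ge 3$, violates the uniqueness proven in \ref{cylstruc}. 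The substantive obstacle throughout is the ACyl extension step in \ref{cylstruc}: it is genuinely \emph{not} true that every parallel form on the cylinder extends to a parallel form on $M$---the form $dz$ notably does not, consistent with $H^1(M;\C)=0$---so the analysis must carefully distinguish the ``transverse'' parallel forms on the cylinder that do extend from the ``longitudinal'' ones that need not.
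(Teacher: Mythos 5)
Your construction of the cover $\Sph^1 \times D$ (splitting off the circle generated by $J\partial_t$ via the Calabi--Fischer--Wolf theorem and reducing the deck group to a cyclic one) is essentially the paper's use of Proposition \ref{p:cfw}. The gap is at your crux step: the claim that a ``transverse'' $\iota$-invariant parallel $(p,0)$-form $\eta_\cyl$ on $M_\cyl$ extends to a bounded harmonic form on $M$. This is false, and your own argument shows why it must be: the form $\Omega_D$ itself is transverse, $\iota$-invariant and parallel of degree $n-1$, yet it cannot extend to a bounded harmonic---hence, by Bochner, parallel---$(n-1,0)$-form on $M$, since that would contradict $\Hol(M)=\sunit{n}$. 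If your extension mechanism were valid it would apply verbatim to $\Omega_D$ and ``prove'' that no irreducible ACyl Calabi--Yau manifold exists. Indeed the transverse/longitudinal dichotomy goes the wrong way in both directions: in the split case the four parallel $(p,0)$-forms on the cylinder are $1$, $dz$, $\Omega_D$ and $dz\wedge\Omega_D$, and the two that extend are $1$ and $dz\wedge\Omega_D$ (the latter longitudinal---it is the holomorphic volume form of $M$), while $dz$ and the transverse $\Omega_D$ do not. Which translation-invariant solutions extend is a Lagrangian-type condition coming from the boundary pairing in the APS index theory and depends on the global geometry of $M$; it cannot be read off from the form on the cylinder.

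The paper avoids this entirely by never asking \emph{which} forms extend, only \emph{how many}: Proposition \ref{prop:spcount} gives $\dim(\harm^S_{\rm bd}/\harm^S_{L^2})=\tfrac12\dim\harm^S_\cyl$, hence $s(M)=\tfrac12\,s(M_\infty)$ for the numbers of parallel spinors (equivalently, parallel $(0,p)$-forms). Since $s(M)=2$ when $\Hol(M)=\sunit{n}$, and $s(M)=\tfrac n2+1$ in the $\quat{\tfrac n2}$ case, this pins down $s(M_\infty)$ exactly; one then bounds $s(M_\infty)$ from \emph{below} by exhibiting deck-invariant parallel forms on the cover $\R\times\Tor^k\times X'$, and the resulting inequality forces $k=1$, rules out further invariant holomorphic forms on $D$, and excludes the hyper-K\"ahler case. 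Your part \ref{nohk} inherits the same gap, since it rests on the uniqueness statement of \ref{cylstruc}. To repair the proof you should replace the extension step by this counting argument.
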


The case $n = 2$ is exceptional in several respects---the main reason being that  SU$(2)$ = Sp$(1)$, so that Calabi-Yau and hyper-K\"ahler coincide in complex dimension $2$---and we will not say very much about it here. ACyl examples do exist but their asymptotic cylinders need not be finite quotients of a product $\R \times \Sph^1 \times D$; see Remark \ref{r:4dimrem} for some more details in this direction.

For another immediate clarification, let us point out that the order $m$ of the K\"ahler isometry $\iota$ of \ref{T:acyl:holo}\ref{cylstruc} really can be greater than $1$ even though $\pi_1(M) = 0$; see Examples \ref{ex:end} and \ref{ex:dihedral}, both of which are $3$-dimensional. This possibility seems not to have been observed previously. In particular, such
examples do not fit within the remit of the known constructions \cite{kovalev03,kovalev-lee08} based on \cite{tianyau90}.

\begin{remark}\label{r:endrestrictions}
We now take a closer look at the restrictions on $M_\cyl$ imposed by \ref{T:acyl:holo}\ref{cylstruc}.
\vspace{-0.1em}
\begin{enumerate}
\item
\label{it:3d}
If $n = 3$ then $D$ could be $\Tor^4$ or $K3$, but not a finite quotient of either; in Examples \ref{ex:end} and \ref{ex:dihedral} we show that both occur (with $m > 1$). 
In both cases there are strong a priori restrictions on the possible values of $m$:
if $D=\Tor^{4}$ then $m\in \{2, 3, 4, 6\}$ by \cite[Lemma 3.3]{fujiki}, 
while if $D=K3$ then $m \le 8$ (and the number of fixed points of $\iota$ depends only on $m$) by \cite[\S 0.1]{mukai:finite:k3} or \cite{nikulin80}.
\item\label{it:end_restr_2}
If $m = 1$, then $h^{p,0}(D) = 1$ for $p \in \{0, n-1\}$ but $h^{p,0}(D) = 0$ otherwise.  
Thus, if $n = 3$ then $D = K3$.  
Also if $\pi_1(D) = 0$ then $\Hol(D) = {\rm SU}(n-1)$; 
in general $D$ could be locally reducible though: 
$D = (K3 \times K3)/\Z_2$ is not ruled out if $\Z_2$ acts anti-symplectically on each factor, \ie
as a holomorphic involution of $K3$ that changes the sign of the holomorphic volume form.
\end{enumerate}
\end{remark}

Theorem \ref{T:acyl:holo}\ref{cylstruc} is important for the compactification problem in view of the following
\begin{quotation}
\emph{Compactification ansatz}: A complex product cylinder
$\bbr \times \Sph^1 \times D \cong \bbc^* \times D$ can be compactified as 
$\bbc \times D$. If $D$ has a holomorphic volume form $\Omega_D$, then 
$(dt + id\theta) \wedge \Omega_D$ extends to a meromorphic volume form with a
simple pole along $\{0\} \times D$. 
\end{quotation}
Thus \ref{T:acyl:holo}\ref{cylstruc} implies that $M_{\infty}$ is biholomorphic to
the complement of $(0 \times D)/\bbz_m$ in $(\bbc \times D)/\bbz_m$. It is therefore natural to allow for \emph{orbifold} compactifications: if $n$ is odd and if $D$ has no holomorphic forms except in degrees $0$ and $n-1$, then the holomorphic Lefschetz formula tells us that $\iota$
acting on $D$ must have fixed points,
so the compactification of $M_{\infty}$ is definitely not smooth if $m > 1$.

If $M$ is an \emph{arbitrary} ACyl K\"ahler manifold, then the orbits of the parallel vector field $J\partial_t$ on $M_\infty$ have no 
reason to split off as isometric $\Sph^1$-factors in any finite cover, so the compactification ansatz above may not apply.
This does not mean that $M_\cyl$ is not holomorphically compactifiable, but the construction of a compactification could then be much more complicated; \cf Remark
\ref{r:4dimrem}.

\subsection*{A compactification theorem} In Section \ref{sec:cptfy} we will prove that any ACyl K\"ahler manifold $M$ that satisfies the conclusion of Theorem \ref{T:acyl:holo}\ref{cylstruc} has an orbifold holomorphic compactification $\oM$ modelled
on the holomorphic compactification of $M_\infty$ discussed above. Somewhat surprisingly, this is \emph{not} an immediate consequence of the ACyl asymptotics and indeed requires significant technical work; \cf the introduction to Section \ref{s:hol_cptf}. Further technical work shows that $\oM$ is K\"ahler, and if $M$ is Calabi-Yau then $\oM$ is projective. 
Thus, our results are most comprehensive if $M$ satisfies the assumptions of Theorem \ref{T:acyl:holo}; for simplicity we give the statement only in this case.

\begin{mtheorem}\label{t:cptfy}
Let $M$ be simply-connected irreducible {\rm ACyl} Calabi-Yau
of complex dimension $> 2$. Let $X$, $D$,  $\iota \in {\rm Isom}({D})$, and $m$ be as in Theorem \textup{\ref{T:acyl:holo}\ref{cylstruc}} and define $\oD = D/\langle \iota\rangle$. Then with respect to either of the two parallel complex structures on $M$ we have: 
\begin{enumerate}
\item There exists a projective orbifold $\oM$ with $h^{p,0}(\oM) = 0$ for all $p > 0$ and vanishing plurigenera such that $\oD \in |{-K_{\oM}}|$ is an orbifold divisor and $M$ is biholomorphic to $\oM \setminus \oD$. The orbifold normal bundle to $\oD$ in $\oM$ is biholomorphic to $(\C \times D)/\langle \iota\rangle$ as an orbifold line bundle. Thus, if
$m = 1$ then $\oM$ is smooth and  the normal bundle of $\oD$ is holomorphically trivial.

\item The {\rm ACyl} K\"ahler form is
cohomologous to the restriction to $M$ of a K\"ahler form on $\oM$.

\item\label{it:fibre} If $b^1({D}) = 0$ then the linear system $|m\oD|$ is a pencil on $\oM$, defining a fibration $\oM \to \mathbb{P}^1$ with $\oD$ as an $m$-fold fibre. In particular this holds for $m = 1$ since $b^1(X) = 1$ by Theorem \textup{\ref{T:acyl:holo}\ref{cylstruc}}.
\end{enumerate}
\end{mtheorem}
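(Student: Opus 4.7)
The strategy is to build $\oM$ by complex-analytically gluing the orbifold model for infinity to $M$ along the cylindrical end, and then upgrade $\oM$ successively to K\"ahler and then to projective. Theorem \ref{T:acyl:holo}\ref{cylstruc} identifies $M_\infty$ biholomorphically with $(\bbc^* \times D)/\gen\iota$ via the $\iota$-equivariant map $w = e^{-(t+i\theta)}$, and the compactification ansatz produces the orbifold model $(\bbc \times D)/\gen\iota$ with divisor at infinity $\oD = (\{0\} \times D)/\gen\iota$ and meromorphic volume form $-\frac{dw}{w} \wedge \Omega_D$ having a simple pole along $\oD$.

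The main technical hurdle is promoting the ACyl diffeomorphism $\Phi$ near infinity to a biholomorphism. The pullback $\Phi^* w$ is only approximately holomorphic on the end of $M$; since the complex structure on $M$ converges exponentially fast to the cylindrical one, $\bar\partial(\Phi^* w)$ decays like $e^{-(1+\delta)t}$. I would set up Fredholm theory for $\bar\partial$ on $\R \times X$ in exponentially weighted H\"older spaces: the indicial set is discrete, so a weight slightly above $1$ avoids criticality, and $\bar\partial u = \bar\partial(\Phi^* w)$ becomes solvable on the end with $|u| = O(e^{-(1+\delta')t})$ for some $\delta' > 0$. Then $z := \Phi^* w - u$ is holomorphic and asymptotic to $\Phi^* w$ to subleading order; the same correction applied fibrewise to a chart of $D$-coordinates assembles into a holomorphic embedding of a neighborhood of infinity in $M$ onto a deleted neighborhood of $\oD$ in $(\bbc \times D)/\gen\iota$. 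Gluing along this biholomorphism produces $\oM$ as a compact complex orbifold with $\oD$ a divisor and $\oM \setminus \oD \simeq M$.

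Claim (i) then follows quickly: the parallel holomorphic volume form $\Omega$ extends through the gluing to a meromorphic volume form whose pole divisor is exactly $\oD$, so $\oD \in |{-K_\oM}|$, and the local model realizes the orbifold normal bundle as $(\bbc \times D)/\gen\iota$. For $h^{p,0}(\oM) = 0$ with $p > 0$, any such form restricts to a bounded holomorphic form on $M$ whose leading asymptotic coefficient is an $\iota$-invariant holomorphic $p$-form on $D$; by Theorem \ref{T:acyl:holo}\ref{cylstruc} such coefficients vanish for $0 < p < n$, and a further expansion combined with Bochner on the end forces the subleading corrections to vanish as well. For $p = n$ the extension would contradict the pole of $\Omega$ along $\oD$. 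The same reasoning handles the plurigenera.

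For (ii), the cylindrical K\"ahler form $dt \wedge d\theta + \omega_D$ and the smooth form $\frac{i}{2} dw \wedge d\bar w + \omega_D$ on the orbifold model differ by a $\partial\bar\partial$-exact term with potential explicit in $\log|w|$, so a cut-off interpolation yields a K\"ahler form on $\oM$ cohomologous on $M$ to the ACyl K\"ahler form. Projectivity then follows from $h^{2,0}(\oM) = 0$ since rational $(1,1)$-classes are dense near the K\"ahler class and orbifold Kodaira embedding applies. For (iii), the $\iota$-invariant coordinate $z = w^m$ extends to a meromorphic function on $\oM$ with pole divisor exactly $m\oD$, providing a section of $\mathcal{O}(m\oD)$ independent of the constant section. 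When $b^1(D) = 0$, $D$ carries no holomorphic $1$-forms and its only holomorphic functions are constants; a Laurent expansion at $\oD$ combined with the indicial analysis at the end then shows $h^0(\oM, \mathcal{O}(m\oD)) = 2$ with empty base locus off $\oD$, and the induced map $\oM \to \bbp^1$ is the claimed fibration with $\oD$ as an $m$-fold fibre.
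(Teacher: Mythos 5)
There is a genuine gap, and it sits exactly where the paper warns the real work is. Your central step asserts that solving $\bar\partial u = \bar\partial(\Phi^*w)$ on the end of $M$ in weighted H\"older spaces, and doing "the same correction fibrewise" to $D$-coordinates, assembles into a holomorphic embedding onto a deleted neighbourhood of $\oD$. First, for $n>2$ the operator $\bar\partial$ on \emph{functions} on the $n$-dimensional end is overdetermined, not elliptic, so the Lockhart--McOwen indicial/Fredholm framework you invoke does not apply to it directly; the paper instead first performs a gauge-fixing step, constructing $J$-holomorphic cylinders asymptotic to $\R^+\times\Sph^1\times\{x\}$ by a fixed-point argument for the genuinely elliptic one-variable $\bar\partial$-equation \emph{along each cylinder}, and all subsequent $\bar\partial$-analysis is slicewise. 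Second, and more seriously: under $w=e^{-t-i\theta}$ the pushed-forward complex structure is not even bounded near $\{w=0\}$ (its $T^*\Delta\otimes TD$ components blow up like $|w|^{-1+\delta}$), and the natural corrections to the $D$-coordinates decay only like $O(e^{-\delta t})=O(|w|^\delta)$, so your candidate chart is a priori only H\"older at the divisor. Showing that the complex structure nevertheless extends — via the Nijenhuis torsion equation $\partial_t\tilde J+\tilde J\partial_\theta\tilde J=0$, a partial expansion $\tilde J - J_\infty = w\tilde K_1+O(|w|^{1+\alpha})$, the $C^{1,\alpha}$ Newlander--Nirenberg theorem, and a further power-series bootstrap — is the heart of the paper's Theorem \ref{thm:cptfy}, and nothing in your sketch plays that role. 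A similar issue affects (ii): your interpolation compares the two \emph{model} forms, but the actual ACyl K\"ahler form differs from the cylindrical model by $O(e^{-\delta t})$ terms which have no reason to extend smoothly across $\oD$ and which can swamp the capped-off potential $O(e^{-2t})=O(|w|^2)$; the paper needs either a slicewise $\bar\partial$-correction making the errors smooth at infinity, or an edge-metric/Varouchas regularisation.

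For (iii) your argument is circular. A nonconstant section of $\mathcal{O}_{\oM}(m\oD)$ restricts to a holomorphic function on $M$ growing like $e^{mt}$ (leading term $w^{-m}$), and by the general half-extends principle (cf.\ Proposition \ref{prop:spcount} and the remark after it) only half of the translation-invariant indicial solutions extend to $M$; deciding \emph{whether this particular one does} is precisely the question. Saying "$w^m$ extends to a meromorphic function with pole divisor $m\oD$" assumes the answer. The paper identifies the obstruction via $0\to\calo_{\oM}\to\calo_{\oM}(m\oD)\to\calo_{m\oD}(m\oD)\to 0$ together with the lifting sequences $0\to\calo_{k\oD}(\ell\oD)\to\calo_{(k+1)\oD}((\ell+1)\oD)\to\calo_{\oD}((\ell+1)\oD)\to 0$, and uses $b^1(D)=0$ to kill $H^1(\calo_{kD}(\ell D))$ by induction, hence the $\Z_m$-invariant obstruction in $H^1(\calo_{(m-1)\oD}((m-1)\oD))$; for $m>1$ this vanishing is genuinely nontrivial (see Remark \ref{r:fibering_obstructions} for a case where the formal obstruction space is nonzero when $b^1(D)>0$). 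Your treatment of $h^{p,0}(\oM)=0$, the plurigenera, and projectivity via the Kovalev--Lee argument is in line with the paper, but those are the easy parts once the compactification exists.
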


Before discussing the statement of Theorem \ref{t:cptfy} in more detail, let us indicate the basic strategy of the proof when $m = 1$.
Given a smooth divisor $\oD$ in a complex manifold $\oM$ whose normal bundle is trivial as a smooth complex line bundle, there exist exponential maps sending the fibres of the normal bundle to holomorphic disks in $\oM$. In proving Theorem \ref{t:cptfy}, we first  construct a ``punctured version'' of such an exponential map purely within $M$.
By studying $\bar{\partial}$-equations along the resulting punctured holomorphic disks in $M$, we will then be able to prove that the complex structure of $M$ is sufficiently regular at infinity to admit a holomorphic compactification $\oM$.

\begin{ex}
\label{ex:end}
To further illustrate the $m > 1$ case of Theorem \ref{t:cptfy}, we describe a simply-connected 
irreducible ACyl Calabi-Yau $3$-fold where $D$ is a torus and $m = 2$. This space is closely related to a Kummer construction due to Joyce; see \cite[7.3.3(iv)]{jnthesis}.

Let $E$ be an elliptic curve and let
$\oM_0 = (\P^1 \times E \times E)/\gen{\alpha,\beta}$,
where $\alpha$ and $\beta$ act on $\P^1$ as the 
commuting holomorphic involutions $z \mapsto \frac{1}{z}$ and $z \mapsto -\frac{1}{z}$,
and on $E \times E$ as $(-1,1)$ and $(1,-1)$. Let $\oM$ be the blow-up of
$\oM_0$ at the fixed sets of $\alpha$ and $\beta$ (these have complex
codimension~2). The fixed points of $\iota = \alpha\beta$ become orbifold
singularities in $\oM$ contained in the image $\oD \cong (E {\times} E)/\{\pm 1\}$
of $\{0, \infty\} \times E \times E$. Since $\{0, \infty\}$ is an anticanonical
divisor on $\P^1$ and the blow-up is crepant, $\oD$ is an anticanonical orbifold
divisor on $\oM$ (``two cylindrical ends folded into one'').

We can deduce from Theorem \ref{t:geom-exi} that $M = \oM \setminus \oD$ admits ACyl Calabi-Yau metrics. 
However, we can also think of $M$ as a blow-up of the flat orbifold
\[
M_0 = (\bbr \times \Sph^1 \times E \times E)/\gen{\alpha,\beta}
\] and obtain
ACyl Calabi-Yau metrics by a generalised Kummer construction \cite[7.3.3(iv)]{jnthesis}.
Because $\gen{\alpha,\beta}$ is generated by elements with fixed points,
the argument of \cite[\S 12.1.1]{joyce00} can be used to prove that
$\pi_1(\bbr \times \Sph^1 \times E \times E) \to \pi_1(M_0)$ is surjective,
and that $M_0$ and $M$ are simply-connected.
This model for $M$ also makes it easy to see that
the cross-section $X$ is the quotient of
$\Sph^1 \times E \times E$ by the fixed-point free involution
$(\theta, x, y) \mapsto (\theta+\pi, -x, -y)$; in particular, $b^1(X) = 1$ in accordance with Theorem \ref{T:acyl:holo}\ref{cylstruc}
 since the only $\Z_2$-invariant parallel $1$-form upstairs is $d\theta$.
\end{ex}

\begin{remark}\label{r:fibration}
We now make some basic comments about the fibration in Theorem \ref{t:cptfy}\ref{it:fibre}.
\begin{enumerate}
\item
No compact complex manifold with finite fundamental group can fibre over
a Riemann surface with non-zero genus, since then the lift of the fibering map
to the universal cover would be a non-constant holomorphic function from a
compact complex manifold to $\bbc$.
\item 
We can compare the conclusions of Theorems \ref{T:acyl:holo}\ref{nohk} and \ref{t:cptfy}\ref{it:fibre} with the following observation due to Matsushita \cite[Lemma 1(2)]{matsushita}: if $M$ is a \emph{compact} K\"ahler manifold of holonomy ${\rm Sp}(\frac{n}{2})$, $n = \dim_\C M$, and if  $f: M \to B$ is a surjective holomorphic map onto a K\"ahler manifold $B$ of complex dimension $0 < b < n$, then $b = \frac{n}{2}$. 
(In this situation, a much more difficult result due to Hwang \cite{hwang} then asserts that $B$ is projective space if both $M$ and $B$ are algebraic; these algebraicity hypotheses have very recently been removed by Greb and Lehn \cite{greb13}.)
\item\label{it:nonfibration} We do
not know whether or not $|m\oD|$ still defines a fibration of $\oM$ over $\P^1$
if $b^1({D}) > 0$ (hence necessarily $m > 1$). In this direction, observe that composing the projection $\P^1 \times E \times E \to \P^1$ in Example \ref{ex:end} with a degree $4$ map
$\P^1 \to \P^1$ invariant under $\gen{\alpha,\beta}$ yields a fibration 
$\oM \to \P^1$ corresponding to $|2\oD|$.
Now $M$ admits nontrivial ACyl Calabi-Yau
deformations with the same cylindrical end as $M$; it is not clear to us
whether or not these are still fibred by $|2\oD|$.
\end{enumerate}
\end{remark}

\begin{remark}\label{r:4dimrem}
The compactification question for $n = 2$ is more subtle. To begin with, we have $X = \Tor^3$ since
$\Hol(\R \times X) \not\subseteq \sunitary2$ if $X$ is a proper quotient of
$\Tor^3$ (but all orientable proper quotients of $\Tor^3$ do arise as
cross-sections of \emph{locally} hyper-K\"ahler ACyl $4$-manifolds with
nontrivial $\pi_1$ \cite[Thm 0.2]{BM}). By \cite[Thm 1.10]{hein}, $X$ need not be an isometric product $\Sph^1 \times \Tor^2$, and by extending the construction of \cite{hein} one can show that \emph{every} flat
torus $\Tor^3$ occurs as a cross-section. Thus, for a generic choice of
hyper-K\"ahler metric or parallel complex structure $J$, the orbits of
$J\partial_t$ do not split off as isometric $\Sph^1$-factors in any finite
cover of $X$, and our compactification ansatz does not apply.

It is nevertheless possible to compactify $M_\infty$ holomorphically, strongly suggesting that $M$ itself can be compactified so that $\oM$ is 
$\P^2$ blown up in $9$ {general} points, $\oD$ is the proper transform of the \emph{unique} cubic passing through these points, and $|\oD|$ is trivial. By contrast, the 
construction in \cite{hein} is based on \emph{pencils} of cubics in $\mathbb{P}^2$. We plan to discuss the details of this picture elsewhere.
\end{remark}

\subsection*{Existence and uniqueness of ACyl Calabi-Yau metrics} 
Our final main result both extends the Tian-Yau existence theorem for Ricci-flat K\"ahler metrics of linear volume growth \cite[Cor 5.1]{tianyau90} to a natural level of generality and establishes exponential asymptotics for these metrics. We also have a basic uniqueness result in this context (Theorem \ref{t:uniqueness}).

\begin{mtheorem}\label{t:geom-exi}
Let $\oM$ be a compact K\"ahler orbifold of complex dimension $n \geq 2$. Let $\oD \in |{-}K_{\oM}|$ be an effective orbifold divisor satisfying the following two conditions:
\begin{enumerate}
\item The complement $M = \oM\setminus\oD$ is a smooth manifold. 
\item The orbifold normal bundle of $\oD$ is biholomorphic to
$(\C \times D)/\langle \iota \rangle$ as an orbifold line bundle, where $D$ is a connected compact
complex manifold and $\iota$ is a complex automorphism of $D$ of order $m < \infty$ acting on the product via $\iota(w,x) = (\exp(\frac{2\pi i}{m})w, \iota(x))$.
\end{enumerate}
Let $\Omega$ be a meromorphic $n$-form on $\oM$ with a simple pole along $\oD$. For every orbifold K{\"a}hler class $\mathfrak{k}$ on $\oM$ there exists an {\rm ACyl} Calabi-Yau
metric $\omega$ on $M$ such that $\omega \in \mathfrak{k}|_M$ and
$\omega^n = i^{n^2}\Omega \wedge \bar{\Omega}$.
\end{mtheorem}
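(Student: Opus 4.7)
The plan is a Calabi ansatz plus Monge--Amp\`ere correction, adapted to the ACyl setting via weighted analysis on $M$.

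\textbf{Step 1. Background metric.} By adjunction on an orbifold chart, $K_D \cong \calo_D$, so Yau's theorem produces a Ricci-flat K\"ahler metric $\omega_D$ on $D$ in any given class; by averaging the class over $\langle\iota\rangle$ and invoking uniqueness, $\omega_D$ can be chosen $\iota$-invariant. On the model end $(\bbc^* \times D)/\langle\iota\rangle \cong [0,\infty)\times X$, using $w = e^{-t-i\theta}$, the $\iota$-invariant product K\"ahler form $\omega_{\rm cyl} = dt\wedge d\theta + \omega_D$ matches $i^{n^2}\Omega\wedge\bar{\Omega}$ as a volume form on the end after a normalisation absorbing a positive constant into $\omega_D$ or equivalently into the Poincar\'e residue of $\Omega$. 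Gluing $\omega_{\rm cyl}$ to a smooth K\"ahler representative of $\mathfrak{k}$ on the compact part of $\oM$ by a cutoff applied to K\"ahler potentials (using the local form $\omega_{\rm cyl} = \tfrac{i}{2}\partial\bar{\partial}(\log|w|)^2 + \omega_D$) produces an ACyl K\"ahler form $\omega_0 \in \mathfrak{k}|_M$ with $|\nabla^k(\omega_0 - \omega_{\rm cyl})| = O(e^{-\delta t})$ for some $\delta > 0$ and all $k$.

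\textbf{Step 2. Monge--Amp\`ere reduction.} Define $f \in C^\infty(M)$ by $\omega_0^n = e^f \, i^{n^2}\Omega\wedge\bar{\Omega}$. By construction $|\nabla^k f| = O(e^{-\delta t})$, and a Stokes' theorem argument on truncations $\{t\leq T\}$ (using exponential decay of boundary contributions and $d$-exactness of $\omega_{\rm cyl}^n - \omega_0^n$ on the end) gives $\int_M (e^f - 1)\omega_0^n = 0$. We seek $u \in \holdad{2}$ solving
\[
(\omega_0 + i\partial\bar{\partial} u)^n = e^f \omega_0^n, \qquad \omega_0 + i\partial\bar{\partial} u > 0,
\]
with positive weight $\delta$ smaller than both the decay rate of $f$ and the first positive indicial root of the cylindrical Laplacian on $X$.

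\textbf{Step 3. Continuity method and weighted analysis.} Run the continuity method along a path $f_s$, $s \in [0,1]$, with $f_0 = 0$ and $f_1 = f$, chosen to preserve the compatibility integral at each step (for instance $f_s = sf + c(s)$ with $c(0) = c(1) = 0$). \emph{Openness}: by Lockhart--McOwen theory, for $\delta > 0$ below the first positive indicial root, the linearisation $\Delta_{\omega_s}$ maps $\holdad{2}$ isomorphically onto the codimension-one subspace of $\holdad{0}$ of functions with zero $\omega_s^n$-integral (the kernel is zero since constants do not lie in $\holdad{2}$; the cokernel is one-dimensional, spanned by constants via the dual pairing). \emph{Closedness}: the standard unweighted a priori estimates transfer to ACyl manifolds thanks to the uniform Sobolev inequality---Yau's $C^0$ bound by Moser iteration, the Aubin--Yau pointwise $C^2$ bound via the maximum principle, and $C^{2,\alpha}$ by Evans--Krylov---giving a bounded $C^{2,\alpha}$ solution. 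To upgrade this to $\holdad{2}$, a barrier argument for $\Delta_{\omega_0}$ on $[T,\infty)\times X$ first yields $u = o(1)$; then treating the equation as a perturbation of its linearisation and applying weighted elliptic regularity places $u \in \holdad{2}$, and bootstrapping gives smoothness.

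\textbf{Main obstacle.} The delicate step is promoting the bounded $C^0$ Monge--Amp\`ere solution to an exponentially decaying one, which does not come automatically from the unweighted continuity method. The strategy combines a barrier construction on the cylindrical end with Fredholm theory and indicial root analysis for the cylindrical Laplacian on $X$. The orbifold singularities of $\oM$ lie entirely along $\oD$ and never enter the PDE problem on the smooth manifold $M$, so they introduce no additional analytic complications.
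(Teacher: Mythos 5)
Your overall strategy---glue a cylindrical Calabi ansatz onto a K\"ahler representative of $\mathfrak{k}$ and then solve an ACyl complex Monge--Amp\`ere equation by a continuity method in weighted H\"older spaces---is the same as the paper's, but two steps as written would fail. The most serious is the claim that $\int_M(e^f-1)\omega_0^n=0$ follows from a Stokes argument. It does not: the integrand is an exact top-degree form whose natural primitive does not decay, and for a naive gluing at radius $r$ the paper computes this integral to be comparable to $-|\log r|\neq 0$. (Stokes only gives the \emph{necessity} of the condition once a decaying solution $u$ exists, since then $\omega_u^n-\omega_0^n=i\partial\bar\partial u\wedge\eta$ with $du$ decaying and $\eta$ bounded.) To arrange the condition one must move the background metric inside $\mathfrak{k}|_M$ by a compactly supported exact $(1,1)$-form---the bump term $t\,i\partial\bar\partial B$ of \textup{(\ref{e:newform2})}---and solve a balancing equation for $t$, checking that the forced value $t\sim\frac{1}{rs}|\log r|$ is still compatible with positivity of the glued form in the gluing annulus; positivity there is itself not free, because $i\partial\bar\partial(\chi(\log|w|)^2)$ has large terms of indefinite sign where $\chi$ varies. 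Your sketch omits both the balancing mechanism and the positivity check, and without the former Theorem \ref{t:ana-exi} simply cannot be applied.

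Second, invoking ``the uniform Sobolev inequality'' to transfer Yau's Moser-iteration $C^0$ bound is exactly the trap the paper flags: a Euclidean-type global inequality $\|v\|_{2n/(n-2)}\lesssim\|\nabla v\|_2$ fails on any manifold of subquadratic (here linear) volume growth, and this is the error in \cite{kovalev03}. One needs the weighted substitute of Proposition \ref{p:wsob}, with a weighted average subtracted and the weight $e^{-\mu t}$ on the left, and the iteration must be redone accordingly. A smaller but real gap: when $m>1$ or $\oM$ is not fibred by $|\oD|$, the tubular neighbourhood of $\oD$ is a product only \emph{smoothly}, so $\omega_D$ and $(\log|w|)^2$ pulled back by the smooth projection are not closed $(1,1)$-forms; one needs $\bar\partial w=O(|w|^2)$ from holomorphic triviality of the normal bundle (Appendix \ref{app:trivial_nb}) together with the error estimates of Appendix \ref{s:nonfibred_errors}. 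Your final step (continuity method in $\holdad{k+2}$, openness via weighted Fredholm theory, then promoting the bounded solution to a decaying one) does match the paper's architecture; the paper's decay upgrade uses an energy/differential-inequality argument for $\int_{t>T}|\nabla u|^2$ rather than barriers, but either route should work once the gaps above are filled.
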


\begin{remark}
\label{r:geom-exi:orbi}
We can describe the ACyl geometry of $(M,\omega)$ more precisely.
\begin{enumerate}
\item The cross-section of $(M,\omega)$ is isometric to 
$(\Sph^1 \times D)/\langle \iota \rangle$. Here $D$ is equipped with the unique $\iota$-invariant Ricci-flat K\"ahler metric representing the pullback of $\mathfrak{k}|_{\oD}$, where we observe that $\oD$ has trivial canonical bundle by adjunction so that the Calabi-Yau theorem \cite{yau78} applies. The
 length of the $\Sph^{1}$-factor is determined by the choice of a meromorphic volume form $\Omega$, which is unique only up to a scalar factor (and is independent of the choice of a K\"ahler class $\mathfrak{k}$).

\item The ACyl map $\Phi : \bbrp \times (\Sph^1 \times D)/\langle \iota\rangle \to M$
is obtained by composing a suitable exponential map, $\exp$, on the normal bundle of $\oD$ with the complex exponential function $\R^+ \times \Sph^1 \to \C^*$. The precise construction of $\exp$ is somewhat involved and relies on Appendix \ref{app:trivial_nb}.
\end{enumerate}
\end{remark}

\begin{remark}
The original Tian-Yau construction \cite{tianyau90} concerns the special case of Theorem \ref{t:geom-exi} where
$\oM$ is a projective manifold fibred by the linear system $|\oD|$. This is not general enough to cover all
possible pairs 
$(\oM,\oD)$ arising from Theorem \ref{t:cptfy}.
If $m = 1$, then $\oM$ is necessarily smooth and fibred by $|\oD|$ by \ref{t:cptfy}\ref{it:fibre}, but even in this case our proof makes no use of the
fibration and our result is more precise: Tian-Yau make no statement about which K\"ahler classes on $M$ contain complete
Ricci-flat metrics, nor do they prove that these metrics converge to cylinders at infinity.
\end{remark}

Projective \emph{manifolds} $\oM$ satisfying the hypotheses of Theorem \ref{t:geom-exi}
were first constructed by Kovalev \cite{kovalev03} as blow-ups of Fano $3$-folds; this construction 
yields around one hundred families of ACyl Calabi-Yau $3$-folds with split cross-section $\Sph^1 \times D$.
In \cite{chnp1} so-called \emph{weak} Fano manifolds are used instead; 
the weak Fano construction yields hundreds of thousands of families of split ACyl Calabi-Yau $3$-folds. 

Kovalev-Lee \cite{kovalev-lee08} describe a different class of manifolds $\oM$
satisfying the hypotheses of Theorem~\ref{t:geom-exi} based on $K3$ surfaces with anti-symplectic involutions.
This leads to around 70 further families of 
split ACyl Calabi-Yau $3$-folds. 
By modifying the construction of \cite{kovalev-lee08}, we can find admissible \emph{orbifolds} $\oM$ with $m > 1$, as follows. (The
cross-section of the resulting non-split ACyl Calabi-Yau $3$-fold will be the mapping torus 
of a finite order symplectic automorphism of $K3$.)

\begin{ex}
\label{ex:dihedral}
Let $D$ be a $K3$ surface with a group $G = \gen{\iota, \tau}$ of holomorphic automorphisms where $\iota$ is symplectic of order $m$ and $\tau$ is an anti-symplectic
involution with non-empty fixed set such that $\tau\iota\tau = \iota^{-1}$; in particular, $G$ is isomorphic to the dihedral group with $2m$ elements.

Let $\iota$ act on $\P^1$ by
$z \mapsto e^{2\pi i/m}z$, and $\tau$ by $z \mapsto \frac{1}{z}$.
Let $\oM_0 = (\P^1 \times D)/G$ and let $\oM$ be the blow-up of $\oM_{0}$ at the fixed sets of
the reflections $\tau\gen{\iota} \subset G$ (which are disjoint).
$\oM$ has orbifold singularities from the
fixed points of the rotations $\gen{\iota}$, which all lie in the image
$\oD = D/\Z_m$ of $\{0, \infty\} \times D$. 

By Theorem \ref{t:geom-exi}, $M = \oM \setminus \oD$ admits
ACyl Calabi-Yau metrics with cross-section $X = (\Sph^1 \times D)/\Z_m$.
Moreover, we can construct a fibration $\oM \to \P^1$ with $\oD$ as an $m$-fold fibre as
in Example \ref{ex:end}, though in this case the existence of the fibration is
also guaranteed by Theorem \ref{t:cptfy}\ref{it:fibre} since $b^{1}(D)=0$.

Here we choose not to pursue a systematic study of such examples  and instead content ourselves with 
exhibiting a few concrete ones. 
As in Remark \ref{r:endrestrictions}\ref{it:3d} we have the a priori bound $m\le 8$. 
\mbox{\cite[\S 3]{koz05}} describes a $K3$ surface with an automorphism group
$A_6 \rtimes \bbz_4$ containing $G$ of the
required kind for $2 \leq m \leq 6$; see also \cite[\S 7]{frantzenthesis}.
For $m = 2, 3,4$ one can also use Kummer surface constructions.
\end{ex}

To round off our discussion we state a uniqueness theorem. Given some facts from ACyl Hodge 
theory, the proof is fairly straightforward. See also \cite[Thm 1.9]{hein} and the surrounding discussion.

\begin{mtheorem}\label{t:uniqueness}
Let $M$ be an open complex manifold with only one end and let $\omega_1,\omega_2$ be {\rm ACyl} K\"ahler metrics on $M$ such that $\omega_1-\omega_2$ is exponentially decaying with respect to either $\omega_1$ or $\omega_2$. If $\omega_1,\omega_2$ represent the same class in $H^2(M)$ and have the same volume form, then $\omega_1 = \omega_2$.
\end{mtheorem}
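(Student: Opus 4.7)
The plan is to reduce the uniqueness statement to a Monge--Amp\`ere argument for a decaying K\"ahler potential, exactly as in the compact case of Calabi's original uniqueness theorem, but with the boundary terms controlled by the exponential decay hypothesis.

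First I would invoke ACyl Hodge theory for the complex structure on $M$ to find a K\"ahler potential. The form $\eta = \omega_2 - \omega_1$ is a real closed $(1,1)$-form which is exponentially decaying with respect to the ACyl metric $\omega_1$ (or equivalently $\omega_2$, since the two metrics are uniformly equivalent at infinity), and its cohomology class in $H^2(M;\R)$ vanishes by assumption. The ACyl $\partial\bar\partial$-lemma in weighted spaces---a standard consequence of the Hodge-theoretic decomposition of exponentially decaying forms on cylindrical ends that is surveyed in \cite{hein}---then produces a real function $\varphi$ on $M$, decaying exponentially together with all its derivatives, such that $\omega_2 - \omega_1 = i\partial\bar\partial\varphi$. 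This is the step I expect to be the main obstacle, in the sense that it is the step that genuinely uses the ACyl geometry and the analytic machinery built up earlier in the paper; once it is in hand, the rest of the proof is classical.

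Next I would exploit the equal-volume condition. Set
\[
T = \sum_{j=0}^{n-1} \omega_1^{j} \wedge \omega_2^{n-1-j},
\]
a smooth strictly positive $(n-1,n-1)$-form on $M$. The algebraic identity
\[
\omega_2^n - \omega_1^n = (\omega_2 - \omega_1) \wedge T = i\partial\bar\partial\varphi \wedge T
\]
together with the hypothesis $\omega_1^n = \omega_2^n$ gives $i\partial\bar\partial\varphi \wedge T \equiv 0$ on $M$. I would now multiply by $\varphi$ and integrate over $M$. Since $dT = 0$ (each summand is closed) and $\varphi$, $\nabla\varphi$ decay exponentially while $T$ is at most polynomially large on the cylindrical end, Stokes' theorem applied on the exhaustion by sublevel sets of the cylindrical coordinate, together with a standard cutoff argument, yields
\[
0 = \int_M \varphi \, i\partial\bar\partial\varphi \wedge T = -\int_M i\partial\varphi \wedge \bar\partial\varphi \wedge T,
\]
with vanishing boundary contributions.

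Finally, the integrand $i\partial\varphi \wedge \bar\partial\varphi \wedge T$ is pointwise nonnegative because $T$ is strictly positive and $i\partial\varphi \wedge \bar\partial\varphi$ is a nonnegative $(1,1)$-form. Hence it vanishes identically, which forces $\bar\partial\varphi = 0$ pointwise; since $\varphi$ is real this gives $d\varphi = 0$, so $\varphi$ is locally constant. Exponential decay at infinity then forces $\varphi \equiv 0$, and thus $\omega_1 = \omega_2$, completing the proof.
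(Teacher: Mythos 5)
Your Steps 2 and 3 are the classical Calabi uniqueness argument and would be fine if Step 1 delivered what you claim, but Step 1 contains a genuine gap---and it is exactly the step you flagged as the main obstacle. The ACyl $i\partial\bar\partial$-lemma does \emph{not} produce an exponentially decaying potential: the sharp statement only gives a potential of \emph{linear growth}. The reason is the same index phenomenon as in Proposition \ref{p:invert_laplacian}: the Laplacian (and likewise $\bar\partial^*\bar\partial$) maps exponentially decaying functions onto a codimension-one subspace, so solving the relevant equation for a decaying right-hand side generically forces the solution to contain a term asymptotic to a multiple of the cylindrical coordinate $t$. Concretely, $t$ is asymptotically pluriharmonic, so $i\partial\bar\partial(\chi t)$ (with $\chi$ a cutoff equal to $1$ near infinity) is an exponentially decaying exact real $(1,1)$-form with \emph{no} decaying potential: such a potential would produce a pluriharmonic, hence harmonic, function asymptotic to $t$, which is impossible since $\int_M \Delta(\chi t)\,d{\rm vol} = {\rm Vol}(X) \neq 0$ pairs nontrivially with the constants. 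With a potential of the form $\varphi = ct + O(e^{-\delta t})$ your Stokes computation collapses: writing $\eta$ for your form $T$, the boundary term $\int_{\{t=T\}}\varphi\, d^c\varphi\wedge\eta$ is of order $c^2T$, and so is $\int_{\{t<T\}} i\partial\varphi\wedge\bar\partial\varphi\wedge\eta$, so the identity carries no information unless you first prove $c=0$.

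The paper's proof accepts the linear growth and replaces the integration by parts with a Liouville argument. It writes $\omega_2-\omega_1 = d\alpha$ with $\alpha$ asymptotic to a translation-invariant harmonic $1$-form, solves $\bar\partial^*\bar\partial\gamma = \bar\partial^*\alpha^{0,1}$ with $\gamma$ of linear growth, and obtains $i\partial\bar\partial u = \omega_2-\omega_1$ with $u$ of linear growth. Since $\eta$ is a closed strictly positive $(n-1,n-1)$-form, one can write $\eta = \omega^{n-1}$ for a \emph{balanced} Hermitian metric $\omega$, and the identity $i\partial\bar\partial u\wedge\eta = 0$ then says precisely that $u$ is harmonic for the associated Riemannian metric. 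The one-dimensional-cokernel fact now works in one's favour: every subexponentially growing harmonic function defines a direction in the cokernel of the Laplacian on decaying functions, and that cokernel is one-dimensional by Proposition \ref{p:invert_laplacian}, so the only such functions are the constants; hence $u$ is constant. You could instead repair your route by first proving $c=0$ (integrate $i\partial\bar\partial u \wedge \eta = 0$ against $1$ over $\{t<T\}$ using that $\eta$ is closed), after which $\varphi$ decays and your argument goes through---but that computation is essentially the paper's cokernel argument in disguise.
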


Our main reason for including this result is that it allows us to see that Theorems \ref{t:cptfy} and \ref{t:geom-exi} are
inverse to each other---at least in the simply-connected $n > 2$ case. Indeed, if we start with an ACyl Calabi-Yau $n$-fold $M$ with metric
$\omega$, apply Theorem \ref{t:cptfy} to
compactify it to $\oM$, and apply Theorem \ref{t:geom-exi} to $\oM$ to
construct another ACyl Calabi-Yau metric $\omega'$ on $M$ in the same Kähler
class as $\omega$, then $\omega - \omega'$
will be exponentially decaying and so Theorem \ref{t:uniqueness} implies that
$\omega = \omega'$.

\subsection*{Concluding remarks}

We have now come full circle in our theory if the complex dimension is at least $3$: there exists
a natural generalisation and refinement of the Tian-Yau construction of K\"ahler Ricci-flat metrics of linear volume growth, 
and we have proved that this construction exhausts all possible examples of exponentially asymptotically
cylindrical Calabi-Yau manifolds 
that are simply-connected and irreducible. In this section we wish to point out a few open questions.

At a rather basic level we do not currently know whether ACyl Calabi-Yau $n$-folds with non-split 
cross-section $(\Sph^1 \times D)/\langle \iota \rangle$, ${\rm ord}(\iota) = m > 1$, are scarce or plentiful. All the examples we know of are fibred over $\C$, though we have been unable to prove the existence of such a fibration in general and unlike in \cite{tianyau90} our constructions do not rely on it.
There exist formal obstructions to fibering over $\C$ (see \ref{r:fibering_obstructions}), and we suspect that the existence of a fibration is not stable under deformations.

Even in the split case ($m = 1$) it remains to classify the possible projective manifolds $\oM$
satisfying the hypotheses of Theorem \ref{t:geom-exi}.  In three dimensions the
vast majority of known examples \cite{chnp1, kovalev03} (but not all \cite{kovalev-lee08}) arise by blowing up the base loci of smooth
anticanonical pencils in smooth weak Fano $3$-folds. The weak Fano construction produces
a very large but provably finite number of deformation families of split ACyl Calabi-Yau
3-folds. Is it possible to prove that there exist only finitely many 
deformation families of split ACyl Calabi-Yau $3$-folds?

Another (metric) question that remains is whether there exist asymptotically cylindrical Calabi-Yau manifolds with \emph{slower} than exponential convergence. However, applying the methods of Cheeger-Tian \cite{cheeger:tian} should rule this out---if the Gromov-Hausdorff distance of a complete Calabi-Yau manifold to a cylinder goes to zero at infinity, then the convergence should automatically be exponential in $C^\infty$ because 
the cross-section of the cylinder is always integrable as an Einstein manifold.

For a potentially more interesting analytic question, recall that complete Riemannian manifolds of nonnegative Ricci curvature always have at least linear volume growth. The case of \emph{precisely} linear volume growth would therefore seem to be somewhat rigid; but
examples due to Sormani show that numerous pathologies can occur \cite{sor}. Does the Calabi-Yau condition impose further restrictions? Is a complete Calabi-Yau of linear volume growth necessarily Gromov-Hausdorff asymptotic to 
$\R \times X$ for some geodesic metric space $X$? If so, then could $X$ be 
non-compact or singular? 

Finally, we would like to mention some closely related papers that have appeared since this paper was first posted to the arXiv. Li \cite{ChiLi} proved a compactification theorem for asymptotically \emph{conical} complex manifolds similar to Theorem \ref{thm:cptfy} and gave some interesting applications. Li's result was used in \cite{AC3} to prove an asymptotically conical analogue of Theorem \ref{t:cptfy} and a number of uniqueness theorems for asymptotically conical Calabi-Yau manifolds. In a different direction, \cite{CMR} establishes a complete picture of the \emph{deformation and moduli theory} of ACyl Calabi-Yau manifolds.

\subsection*{Acknowledgments} 
MH would like to thank the EPSRC for their continuing support of his research
under Leadership Fellowship EP/G007241/1, which also provided postdoctoral
support for HJH and JN. JN also thanks the ERC for postdoctoral support under
Grant 247331.
The authors would like to thank Martijn Kool and Richard Thomas for help with the proof of Theorem \ref{t:cptfy}\ref{it:fibre}, and Tommaso Pacini for useful comments on a previous draft of this paper.

\section{Basic properties of ACyl Calabi-Yau manifolds}

This section discusses the basic analysis, geometry, and topology of ACyl
Calabi-Yau manifolds. In particular, it provides the technical tools necessary for
the rest of the paper. The results stated in Theorems
\ref{thm:acyl_structure} and \ref{T:acyl:holo} will be proved as we go along: 
see Corollary \ref{t:acyl_top} for \ref{thm:acyl_structure} and
\S\ref{s:acyl_holonomy} for \ref{T:acyl:holo}.

\subsection{Linear analysis and Hodge theory on ACyl manifolds}
\label{s:acyl_analysis}

We review some analytic facts for elliptic operators on manifolds with
cylindrical ends from Lockhart-McOwen \cite{lockhart85}, with applications to
the scalar and Hodge Laplacians and the Dirac operator on ACyl manifolds.

Suppose that $M = U \cup ([0,\infty) \times X)$ topologically for a bounded domain $U \subset M$ and a compact (but not necessarily connected) manifold $X$. A differential operator $\cala : \Gamma(E) \to \Gamma(F)$ on sections
of tensor bundles on $M$ is called \emph{asymptotically translation-invariant} if
there is a translation-invariant operator $\cala_\cyl$ on sections of the
corresponding bundles on $\R_t \times X$ such that the difference between
the coefficients of $\cala$ and $\cala_\cyl$ goes to zero in $C^\infty$ uniformly as $t \to \infty$. 
Now even if $\cala$ is elliptic, then since $M$ is noncompact we cannot expect $\mathcal{A}$ to induce a Fredholm operator on ordinary H\"older or Sobolev spaces. To fix this, it is helpful to introduce Hölder norms with exponential weights.

\begin{definition}
Extend $t$ smoothly to the whole of $M$. For $u \in C^\infty_0(E)$ define
\begin{equation}
\norm{u}_{C^{k,\alpha}_\delta(E)} \equiv \|e^{\delta t} u\|_{\holda{k}(E)},
\end{equation}
and let $C^{k,\alpha}_\delta(E)$ denote the associated Banach space completion of
$C^\infty_0(E)$. Thus, $\holdad{k}$ sections are exponentially decaying for $\delta > 0$, and at worst exponentially growing for $\delta < 0$. We will occasionally use the notation $C^\infty_\delta(E) \equiv \bigcap C^{k,\alpha}_\delta(E)$. 
\end{definition}

We now assume that $\cala$ is elliptic, \ie that the principal symbol
of $\mathcal{A}$ is an isomorphism in every cotangent direction. Then $\delta$
is called a \emph{critical weight} if there exists a non-zero solution of
\begin{equation}
\label{eq:poly}
\cala_\cyl(e^{i\lambda t}u) = 0,
\end{equation}
where $\im \lambda = \delta$ and $u$ is a section of $E \to \R \times X$ that
is polynomial in $t$. The set of critical weights is a discrete subset of
$\bbr$. We then have the following basic result {\cite[Thm 6.2]{lockhart85}}:

\begin{prop}
\label{thm:lockhart}
Let $\cala : \Gamma(E) \to \Gamma(F)$ be asymptotically
translation-invariant elliptic of order $r$.
If $\delta$ is not a critical weight then
the induced linear map $\cala : \holdad{k+r}(E) \to \holdad{k}(F)$ is Fredholm.
\end{prop}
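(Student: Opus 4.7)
The plan is to combine a parametrix construction on the model cylinder $\R \times X$ with the usual interior elliptic parametrix on $M$, and deduce Fredholmness from the fact that all error terms are compact. The key analytic input comes from Fourier analysis along the cylinder axis, which reduces the problem to an elliptic family on the compact cross-section.

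First I would reduce to the translation-invariant model. Pick a cutoff supported on the cylindrical end and write $\cala = \cala_\cyl + \mathcal{B}$ where $\mathcal{B}$ has coefficients tending to zero in $C^\infty$ as $t \to \infty$. A Rellich-type argument, combining this decay with the local compactness of the inclusion $\holda{k+r}(K) \hookrightarrow \holda{k}(K)$ on any compact $K$, shows that $\mathcal{B} : \holdad{k+r}(E) \to \holdad{k}(F)$ is compact on the end, and the contribution from $U$ is compact for the usual interior reasons. It therefore suffices to show that (an extension of) $\cala_\cyl$ between weighted Hölder spaces on the full cylinder $\R \times X$ admits a bounded two-sided inverse.

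Second, analyse $\cala_\cyl$ by Fourier transform in $t$. Conjugation by $e^{\delta t}$ identifies $\cala_\cyl : \holdad{k+r} \to \holdad{k}$ with a translation-invariant elliptic operator $\cala_\cyl^\delta := e^{\delta t}\cala_\cyl e^{-\delta t}$ acting on unweighted Hölder spaces; its full symbol is that of $\cala_\cyl$ with the cotangent variable to $t$ shifted by $i\delta$. Taking the Fourier transform in $t$ produces a holomorphic family $\{\widehat{\cala_\cyl}(\lambda)\}_{\lambda \in \C}$ of elliptic operators on the compact manifold $X$. Each $\widehat{\cala_\cyl}(\lambda)$ is Fredholm of index $0$, and holomorphic Fredholm theory shows that $\lambda \mapsto \widehat{\cala_\cyl}(\lambda)^{-1}$ extends meromorphically in $\lambda$; the poles on the line $\im\lambda = \delta$ are precisely the solutions of \eqref{eq:poly}, which by hypothesis are absent. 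Parameter-dependent pseudodifferential calculus on $X$ furnishes uniform invertibility of $\widehat{\cala_\cyl}(\xi + i\delta)^{-1}$ for $|\xi|$ large (the principal symbol of $\cala_\cyl$ dominates when $\xi^2 + |\eta|^2 \to \infty$), and compactness of the remaining interval in $\xi$ upgrades this to a uniformly bounded family on all of $\R$.

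Third, the inverse Fourier transform in $\xi$ then gives a bounded right inverse for $\cala_\cyl^\delta$ on translation-invariant Hölder spaces; undoing the conjugation yields a bounded inverse $\cala_\cyl : \holdad{k+r} \to \holdad{k}$ on the cylinder. Patching this with an interior parametrix via a partition of unity produces a parametrix for $\cala$ on all of $M$ whose remainders, by the reductions above, are compact on weighted Hölder spaces. Hence $\cala : \holdad{k+r}(E) \to \holdad{k}(F)$ is Fredholm.

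The hard part will be transferring the Fourier/parametrix estimates, which live most naturally in the $L^2$ or Sobolev framework, to the $\holda{}$ scale. Uniform bounds on $\widehat{\cala_\cyl}(\xi + i\delta)^{-1}$ as $|\xi| \to \infty$ in $L^2$ are an immediate consequence of symbol estimates, but the corresponding Hölder bounds demand Schauder estimates with constants that are explicit in the parameter $\xi$; one either works in the Hölder category throughout (keeping careful track of the $\xi$-dependence in the parameter-dependent pseudodifferential calculus) or passes through Sobolev embedding together with commutator bounds to recover the Hölder statement. This is the technical heart of the Lockhart--McOwen paper \cite{lockhart85}, and I would follow their approach verbatim in this step rather than attempting to shortcut it.
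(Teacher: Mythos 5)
Your proposal is correct and follows essentially the same route as the paper: the paper also inverts the translation-invariant model $\cala_\cyl$ on the full cylinder via the Fourier transform in $t$ (citing Maz'ya--Plamenevski\u{\i} for the weighted H\"older isomorphism, with the non-critical weight hypothesis ruling out poles on $\im\lambda = \delta$), and then patches with interior Schauder theory, the Fredholm property following because multiplication by a cut-off and the commutator $[\cala,\psi]$ are compact on $\holdad{k+r}$. The only cosmetic difference is that the paper handles the discrepancy between $\cala$ and $\cala_\cyl$ on the end by a Neumann-series perturbation yielding an exact right inverse there (Proposition \ref{p:mazya}, which it needs again later), whereas you absorb that discrepancy into a compact error; both are fine for Fredholmness.
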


We mention some ingredients of the proof---partly because the result is stated for Sobolev rather than H\"older spaces in
\cite{lockhart85}, and partly because we will need Remark \ref{r:right_inverse} repeatedly in Section \ref{sec:cptfy}.
The first step is to invert $\cala$ along the cylindrical end.

\begin{prop}
\label{p:mazya}
If $\delta$ is not critical then there exists $\calr : \holdad{k}(F) \to \holdad{k+r}(E)$ linear and bounded such that
$\cala \circ \calr = {\rm id}$ on the complement of a bounded subset of $M$.
\end{prop}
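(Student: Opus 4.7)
The plan is to first construct a bounded right inverse $\calr_\cyl$ for the model operator $\cala_\cyl$ on the full cylinder $\R\times X$ via the Fourier--Laplace transform in $t$, and then graft it onto $M$ using the ACyl diffeomorphism $\Phi$ and cutoffs, exploiting the decay of $\cala-\cala_\cyl$ at infinity.

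First I would write $\cala_\cyl=\sum_{j=0}^{r} a_j\,\partial_t^{j}$ with each $a_j$ a translation-invariant differential operator on $X$, and consider the analytic family $\hat A(\lambda)=\sum_{j=0}^{r} a_j(i\lambda)^{j}$, $\lambda\in\C$, of operators on the compact manifold $X$. Ellipticity of $\cala_\cyl$ makes this family parameter-elliptic in Agmon's sense, so each $\hat A(\lambda)$ is Fredholm of index zero and, by analytic Fredholm theory, invertible outside a discrete subset $\Sigma\subset\C$. A direct calculation with the ansatz \eqref{eq:poly} identifies the critical weights with $\{\im\lambda:\lambda\in\Sigma\}$, so for a non-critical $\delta$ the horizontal contour $\im\lambda=\delta$ misses $\Sigma$ and one can define
\begin{equation*}
\calr_\cyl f(t,x)\;=\;\tfrac{1}{2\pi}\int_{\im\lambda=\delta} e^{i\lambda t}\hat A(\lambda)^{-1}\hat f(\lambda,x)\,d\lambda,\qquad \hat f(\lambda,x)=\int_{\R} e^{-i\lambda t} f(t,x)\,dt.
\end{equation*}
Formal Fourier inversion yields $\cala_\cyl\calr_\cyl=\mathrm{id}$, and conjugation by $e^{\delta t}$ reduces the weighted estimate to the unweighted case ($\delta=0$, contour on the real line). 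Boundedness $\holdad{k}(F_\cyl)\to\holdad{k+r}(E_\cyl)$ then follows by combining uniform parameter-elliptic bounds on $\hat A(\lambda)^{-1}$ with interior Schauder estimates applied slab-by-slab on $[t_0-1,t_0+1]\times X$.

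To transfer the inverse to $M$, fix a large constant $T$ and smooth cutoffs $\chi_1,\chi_2$ on $M$ with $\chi_i\equiv 0$ for $t\leq T+i-1$ and $\chi_i\equiv 1$ for $t\geq T+i$. For $f\in\holdad{k}(F)$, view $\chi_2 f$ as a section on $\R\times X$ by extending by zero through $\Phi^{-1}$, set $u=\calr_\cyl(\chi_2 f)$, and define $\calr_0 f=\chi_1\cdot u$ on $M$. Then $\calr_0$ is bounded $\holdad{k}(F)\to\holdad{k+r}(E)$, and on the region $\{t\geq T+2\}$, where both cutoffs equal $1$, a direct calculation yields
\begin{equation*}
\cala\calr_0 f\;=\;\cala_\cyl u+(\cala-\cala_\cyl)u\;=\;f+\mathcal{E}f,\qquad \mathcal{E}f:=(\cala-\cala_\cyl)\,\calr_\cyl(\chi_2 f).
\end{equation*}
Since the coefficients of $\cala-\cala_\cyl$ decay in $C^\infty$ at some rate $e^{-\mu t}$, we have $\|\mathcal{E}\|_{\holdad{k}\to\holdad{k}}\leq C e^{-\mu T}$. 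For $T$ sufficiently large $\mathcal{E}$ is a strict contraction on sections supported on the subend $\{t\geq T+2\}$, so a Neumann-series argument corrects $\calr_0$ to a bounded $\calr$ satisfying $\cala\calr=\mathrm{id}$ off the bounded set $\{t<T+2\}\subset M$.

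The principal obstacle is the boundedness of $\calr_\cyl$ in the middle step: one needs resolvent bounds $\hat A(\lambda)^{-1}=O(|\lambda|^{-r})$ between appropriate spaces on $X$, uniform along the contour as $|\re\lambda|\to\infty$. These are standard parameter-elliptic estimates in the Sobolev category (this is the setting of Lockhart--McOwen), but the passage to weighted H\"older norms---which is what is needed in Section \ref{sec:cptfy}---requires either establishing parameter-elliptic Schauder estimates for $\hat A(\lambda)$ directly (treating $\lambda$ as a Douglis--Nirenberg weight) or bootstrapping from the Sobolev version via Morrey embeddings on the compact cross-section $X$.
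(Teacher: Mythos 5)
Your argument is correct and follows essentially the same route as the paper: invert the translation-invariant operator on the full cylinder via the Fourier transform in $t$ (the paper simply cites Maz'ya--Plamenevski\u{\i} for this isomorphism, stated directly in weighted H\"older spaces, so the parameter-elliptic Schauder estimates you flag as the "principal obstacle" are outsourced to that reference), then patch with cutoffs and absorb the exponentially small error $\cala-\cala_\cyl$ by a Neumann series. The only cosmetic difference is that the paper perturbs the \emph{operator} (inverting $(1-\rho)\cala_\cyl+\rho\cala$, which is close to $\cala_\cyl$ in operator norm) rather than perturbing the \emph{right inverse} as you do; the two are the same contraction argument.
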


\begin{proof}
Maz'ya-Plamenevski\u{\i} \cite[Thm 5.1]{mazya78} show that
$\cala_\infty : \holdad{k+r}(E) \to \holdad{k}(F)$
is an isomorphism by using the Fourier transform. The condition on $\delta$ ensures
that if $v \in \Gamma(F)$ is translation-invariant and $\im \lambda = \delta$,
then $\mathcal{A}_\infty(e^{i \lambda t}u) = e^{i \lambda t}v$ has a unique
translation-invariant solution $u \in \Gamma(E)$.

Let $t_0 \gg 1$ and let $\rho: \bbrp \to \bbr$ be a cut-off function that is $0$ for $t < t_0-1$
and $1$ for $t > t_0$. Set $\cala' \equiv (1-\rho)\cala_\cyl + \rho \cala$
on $X \times \bbr$. Then $\cala'$ is close to $\cala_\cyl$ in operator norm, so it has an
inverse $\calr' : \holdad{k}(E) \to \holdad{k+r}(E)$.
If we define $\calr(u) \equiv \calr'(\rho u)$ on $M$, then $\cala(\calr(u)) = u$ for $t > t_0$.
\end{proof}

\begin{remark}\label{r:right_inverse}
What is proved here is that $\mathcal{A}$ has a right inverse defined on $C^{k,\alpha}_\delta(F)$ over $[t_0,\infty) \times X$ provided that $t_0$ is large enough depending on $k,\alpha,\delta$ and on the rate of convergence of $\mathcal{A}$ to $\mathcal{A}_\cyl$. Since right inverses are not unique, it is not immediately clear whether or not the one constructed here is independent of $k,\alpha$, \ie compatible with the obvious inclusions $C^{\ell,\beta}_\delta \subseteq C^{k,\alpha}_\delta$ for $\ell \geq k$ and $\beta \geq \alpha$.
But this is clear from the proof, provided that the same cut-off function $\rho$ is used.
\end{remark}

Now let $\psi \in C^\infty_0(M)$ be a cut-off function which is equal to $1$ for
$t < t_0$. Proposition \ref{thm:lockhart} can be deduced from Proposition \ref{p:mazya} together
with local Schauder theory and the fact that multiplication
by $\psi$ and the commutator $[\mathcal{A},\psi]$ define compact maps
$\holdad{k+r}(E) \to \holdad{k}(E)$; see \cite[\S 2]{lockhart85}.

In \cite[Thm 6.2]{lockhart85}, Lockhart-McOwen also provide a formula to compute the change in
index of $\mathcal{A}$ as $\delta$ passes a critical weight, by counting the
number of solutions of \eqref{eq:poly}. In \cite[Thm 7.4]{lockhart85},
this is used to compute the indices of formally self-adjoint
operators for $|\delta| \ll 1$. One application is

\begin{prop}\label{p:invert_laplacian}
If $X$ is connected and $\delta > 0$ is smaller than the square root of the first eigenvalue of the scalar Laplacian on $X$, then the scalar Laplacian on $M$ maps $C^{k+2,\alpha}_\delta(M)$ isomorphically onto the subspace $C^{k,\alpha}_\delta(M)_0$ of functions of mean value zero.
\end{prop}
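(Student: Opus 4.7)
The plan is first to apply Proposition \ref{thm:lockhart} to obtain Fredholmness, then to check injectivity and containment of the image in the mean-zero subspace directly by integration by parts, and finally to pin down the codimension of the image via an index computation that exploits the formal self-adjointness of $\Delta$.

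On the cylinder $\R_t \times X$ the equation \eqref{eq:poly} for the scalar Laplacian $\Delta_\cyl = -\partial_t^2 + \Delta_X$ becomes, after the ansatz $u = p(t)f(x)$, the system $-p''(t)f(x) + p(t)\Delta_X f(x) = 0$. Expanding $f$ in an orthonormal basis of eigenfunctions of $\Delta_X$ with eigenvalues $0 = \mu_0 < \mu_1 \le \cdots$ (here I use connectedness of $X$ for simplicity of $\mu_0$), one finds the set of critical weights is exactly $\{\pm\sqrt{\mu_j} : j \ge 0\}$. Any $\delta \in (0, \sqrt{\mu_1})$ therefore avoids this set, and Proposition \ref{thm:lockhart} shows that $\Delta : C^{k+2,\alpha}_\delta(M) \to C^{k,\alpha}_\delta(M)$ is Fredholm.

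For injectivity, a harmonic function $u \in C^{k+2,\alpha}_\delta(M)$ is exponentially decaying together with $\nabla u$, so integration by parts over the exhausting domains $\{t \le R\}$ gives $\int_M |\nabla u|^2 = \int_M u\,\Delta u = 0$, with boundary contributions dying like $e^{-\delta R}$; this forces $u$ to be constant and hence zero. The same integration by parts applied with $u$ paired against the constant function $1$ yields $\int_M \Delta u = 0$, so the image of $\Delta_\delta$ is contained in the closed codimension-one subspace $C^{k,\alpha}_\delta(M)_0$.

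To upgrade this inclusion to an equality I compute the cokernel dimension via the formal self-adjointness of $\Delta$. The index-jump formula of \cite[Thm 6.2]{lockhart85} combined with \cite[Thm 7.4]{lockhart85} gives $\ind(\Delta_\delta) = -\tfrac12 d(0)$, where $d(0)$ is the dimension of the space of $t$-polynomial solutions of $\Delta_\cyl u = 0$. Since $X$ is connected, this space is spanned by $1$ and $t$, so $d(0) = 2$ and $\ind(\Delta_\delta) = -1$. Together with the vanishing of the kernel this forces $\dim \coker(\Delta_\delta) = 1$, so the image is closed of codimension one; sitting inside the codimension-one subspace $C^{k,\alpha}_\delta(M)_0$, it must coincide with it. The main technical input is the self-adjoint index formula, which I quote verbatim from Lockhart-McOwen; everything else is routine.
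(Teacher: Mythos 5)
Your proof is correct and follows essentially the same route as the paper: integration by parts gives injectivity and shows the image lies in the mean-zero subspace, and the index $-1$ (from the Lockhart--McOwen self-adjoint index formula, with $d(0)=2$ coming from the solutions $1$ and $t$ on the cylinder) then forces the image to be all of $C^{k,\alpha}_\delta(M)_0$. The paper states this argument in two sentences; you have simply filled in the details of the critical-weight and index computations.
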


\begin{proof}
Integration by parts shows that the kernel of $\Delta : C^{k+2,\alpha}_\delta(M) \to C^{k,\alpha}_\delta(M)$ is trivial, and that functions in the image have mean value zero. But the index of $\Delta$ on these spaces is $-1$.
\end{proof}

The proof of the index formula uses asymptotic expansions for the elements
in the kernel of $\cala$. If we assume that $\cala$ is asymptotic to $\cala_\cyl$ at
an exponential (rather than just uniform) rate, these can be described more
simply. This often makes it possible to imitate Hodge theoretic arguments on compact manifolds that are based on integration by parts and Weitzenb\"ock formulas.

For example, if $M$ is ACyl in the sense of Definition \ref{d:ACyl}, then every bounded harmonic form $\alpha$ on $M$ has an asymptotic limit
$\alpha_\cyl$, which is itself a harmonic form on $M_\cyl$, such that
$\alpha - \alpha_\cyl \in C^{k,\alpha}_\delta$ on $M_\infty$ for all $k,\alpha$ and some $\delta > 0$.
The bounded harmonic forms with $\alpha_\infty = 0$ are precisely the $L^2$-integrable
ones. We denote the space of all bounded harmonic $k$-forms by $\harm^k_{\bd}(M)$.

\begin{prop}
\label{p:acyl:hodge}
Let $M$ be an \acyl Riemannian manifold.
\begin{enumerate}
\item
\label{it:harm:abs:dr}
The natural map $\harm^{k}_{\bd}(M) \to H^k(M)$ to the de Rham cohomology
of $M$ is surjective.
\item
\label{it:harm:dr:1end}
If $M$ has a single end then $\harm^{1}_{\bd}(M) \to H^1(M)$ is an isomorphism. 
\item
\label{it:acyl:bochner}
If $M$ has nonnegative Ricci curvature then any bounded harmonic $1$-form on $M$ is parallel.
\item
\label{it:killing}
If $M$ has nonpositive Ricci curvature then any Killing vector field on $M$ is
parallel.
\end{enumerate}
\end{prop}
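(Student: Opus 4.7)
The plan is to treat the four parts in a slightly rearranged order: the Bochner-type items (iii) and (iv) first via an integration-by-parts argument with cylindrical cut-offs, then the cohomological items (i) and (ii) via Lockhart--McOwen Fredholm theory. A common ingredient will be a smooth cut-off $\psi_R$ equal to $1$ on $\{t \leq R\}$, supported in $\{t \leq 2R\}$, with $|d\psi_R| = O(1/R)$; since the cylindrical annulus $\{R \leq t \leq 2R\}$ has volume $O(R)$, this will give $\int |d\psi_R|^2 \cdot (\text{any bounded integrand}) = O(1/R) \to 0$.

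For (iii), I apply the Weitzenb\"ock identity $\tfrac{1}{2}\Delta |\alpha|^2 = \langle \Delta \alpha, \alpha\rangle - |\nabla\alpha|^2 - \mathrm{Ric}(\alpha, \alpha)$ to a harmonic $1$-form $\alpha$ with $\mathrm{Ric} \geq 0$, obtaining $\Delta |\alpha|^2 \leq 0$. Multiplying by $\psi_R^2$, integrating by parts and absorbing the cross-term via Cauchy--Schwarz yields $\int \psi_R^2 (|\nabla\alpha|^2 + \mathrm{Ric}(\alpha, \alpha)) \leq C \int |d\psi_R|^2 |\alpha|^2 \to 0$, whence $\nabla \alpha \equiv 0$. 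For (iv), I use the identity $\nabla^*\nabla V = \mathrm{Ric}(V)$ for a Killing field $V$, a consequence of the Killing equation and the formula $\nabla^2_{X, Y} V = R(V, X)Y$. The preliminary step is boundedness of $V$: the flow of $V$ consists of isometries of $M$, which must asymptote to isometries $(t,x) \mapsto (t+a, \sigma(x))$ of the cylinder $\R \times X$, whose infinitesimal generators $a\partial_t + U$ are bounded; alternatively one can apply Lockhart--McOwen analysis (Proposition \ref{thm:lockhart}) to the elliptic equation $\nabla^*\nabla V = \mathrm{Ric}(V)$. Once $V$ is bounded, the Bochner identity $\tfrac{1}{2}\Delta |V|^2 = \mathrm{Ric}(V,V) - |\nabla V|^2 \leq -|\nabla V|^2$ and the same cut-off argument give $\int |\nabla V|^2 = 0$, hence $\nabla V \equiv 0$.

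For (i), I take a smooth bounded closed representative $\alpha$ of the given class and, using Hodge theory on the compact cross-section $X$ together with a cut-off, arrange that $\alpha$ coincides with a translation-invariant harmonic form on the end for $t \geq t_0$; then $d^*\alpha$ has compact support and lies in every weighted space $\holdad{k}$. For small non-critical $\delta > 0$ the Lockhart--McOwen theorem makes $\Delta : \holdad{k+2} \to \holdad{k}$ Fredholm, and self-adjointness identifies its cokernel with the kernel at the dual weight $-\delta$, i.e.\ with the space of bounded harmonic $(k-1)$-forms. The solvability obstruction is that for every bounded harmonic $(k-1)$-form $h$ one needs $\int \langle d^*\alpha, h\rangle = \int \langle \alpha, dh\rangle = 0$, which holds because $dh$ is exponentially decaying ($h$ being asymptotic to a closed translation-invariant form). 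I can therefore solve $\Delta \eta = -d^*\alpha$ for some exponentially decaying $\eta$; applying $d^*$ yields $\Delta(d^*\eta) = 0$ with $d^*\eta$ exponentially decaying, which forces $d^*\eta = 0$ by $L^2$-triviality of exact harmonic forms. Then $\alpha + d\eta$ is a bounded harmonic representative of $[\alpha]$. Part (ii) surjectivity is immediate from (i); for injectivity, if $\omega = df \in \harm^1_{\bd}(M)$ then $f$ is harmonic with bounded gradient, so on the single connected end $f \sim ct + h_\infty$ with $h_\infty$ bounded. A Stokes flux computation $0 = \int_{M_R}\Delta f = -\int_{\partial M_R} *df \to -c \cdot \mathrm{vol}(X)$ forces $c = 0$, and then $f$ is bounded harmonic with a single asymptotic value, hence constant by the maximum principle, giving $\omega \equiv 0$.

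The principal obstacle I expect is the Hodge-theoretic step in (i): identifying the Lockhart--McOwen cokernel at weight $\delta$ and verifying the orthogonality of $d^*\alpha$ against every bounded harmonic form, while standard in spirit, requires some care with boundary terms on truncated domains. The boundedness of Killing fields used in (iv) is the other slightly delicate ingredient: the heuristic that isometries of $M$ asymptote to isometries of the cylinder is natural, but turning this into a rigorous pointwise bound on $|V|$ is most cleanly done through the Lockhart--McOwen asymptotic decomposition of solutions to the elliptic Killing equation.
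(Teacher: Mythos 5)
The paper itself disposes of this proposition almost entirely by citation (Melrose for \ref{it:harm:abs:dr}, \cite[Cor 5.13]{jn1} for \ref{it:harm:dr:1end}, \cite[Prop 6.22]{jn1} for the exponential asymptotics of Killing fields in \ref{it:killing}), so you are attempting something more self-contained. Your treatments of \ref{it:acyl:bochner}, \ref{it:killing} and of injectivity in \ref{it:harm:dr:1end} are correct: the cut-off Bochner argument works precisely because the annulus $\{R\leq t\leq 2R\}$ has volume $O(R)$, and the flux computation killing the linear term of $f$ is the standard one. For \ref{it:killing} you correctly isolate the real issue (boundedness of $V$), and the asymptotic decomposition of solutions of the overdetermined elliptic Killing system is exactly what the paper invokes from \cite{jn1}; note that to feed $V$ into the Lockhart--McOwen machinery you first need an a priori subexponential growth bound, which you can get from the Killing ODE $\nabla^2_{X,Y}V=R(V,X)Y$ along geodesics together with bounded curvature, so this route does close.

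The genuine gap is in \ref{it:harm:abs:dr}, at the solvability of $\Delta\eta=-d^*\alpha$. The cokernel of $\Delta:\holdad{k+2}\to\holdad{k}$ for small non-critical $\delta>0$ is dual to $\ker(\Delta)$ at the weight $-\delta$, and this kernel is \emph{not} the space of bounded harmonic $(k-1)$-forms: since $0$ is a critical weight with indicial solutions of the form $h_0+th_1$ ($h_i$ translation-invariant harmonic), it contains harmonic forms growing linearly in $t$. For such an $h$ one has $dh\sim dt\wedge\beta_1$ with $\beta_1$ harmonic on $X$, which is bounded but \emph{not} exponentially decaying, so your justification of $\int_M\langle d^*\alpha,h\rangle=\int_M\langle\alpha,dh\rangle=0$ collapses at both steps: the integration by parts needs the boundary terms $\int_{\{t=T\}}\langle\iota_{\partial_t}\alpha,h\rangle$ controlled against a linearly growing $h$ (this part is salvageable, since your normalised $\alpha=\pi^*\beta$ has no $dt$-component at infinity, so the boundary term and the integrand $\langle\alpha,dh\rangle$ are both $O(e^{-\delta t})$ up to metric errors), but the resulting convergent integral $\int_M\langle\alpha,dh\rangle$ is then not shown to vanish — the identity you would like to use, moving $d$ back onto $\alpha$, just reproduces the original pairing. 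Some additional input is needed here (e.g.\ the index computation showing which translation-invariant and linearly growing modes actually extend to $M$, or a direct argument that the pairing of a closed bounded form against the exact bounded harmonic form $dh$ vanishes). This bookkeeping is precisely what \cite[Thm 6.18]{melrose94} packages, and is why the paper cites it rather than arguing directly.
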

\begin{proof}
For (i), see Melrose \cite[Thm 6.18]{melrose94}. For (ii), see
\cite[Cor 5.13]{jn1}. (iii) is proved by the Bochner method. For (iv), first note that
 every Killing field of $M$ converges exponentially to a Killing field of $M_\infty$ \cite[Prop 6.22]{jn1}. Thus, the Bochner method applies again.
\end{proof}

Another application, which will be very significant for us, is to the Dirac operator of an ACyl spin manifold $M$.
Let $\harm^S_\cyl$ be the space of translation-invariant solutions of the
Dirac equation $\dirac s = 0$ on~$M_\infty$, and let $\harm^S_{\bd}$ and
$\harm^S_{L^2}$ denote the bounded and $L^2$ solutions on $M$. In analogy with harmonic forms, every element of $\harm^S_{\bd}$ is asymptotic at an exponential rate to an
element of $\harm^S_\cyl$.

\begin{prop}\label{prop:spcount}
Let $M$ be an \acyl spin manifold.
\begin{enumerate}
\item
$\dim (\harm^S_{\rm bd}/\harm^S_{L^2}) = \half \dim \harm^S_\cyl$.
\item If $M$ has nonnegative scalar curvature, then every element of
$\harm^S_{\rm bd}$ is parallel.
\end{enumerate}
\end{prop}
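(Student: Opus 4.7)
For part (i), I would apply the Lockhart-McOwen Fredholm and index-jump theorems to the Dirac operator $\dirac$, which is asymptotically translation-invariant, elliptic, first order, and formally self-adjoint. Its translation-invariant cylindrical model $\dirac_\cyl$ is related to the intrinsic Dirac operator $D_X$ on the cross-section $X$ in the standard way, so that translation-invariant elements of $\ker \dirac_\cyl$ correspond to elements of $\ker D_X$; thus $\harm^S_\cyl \cong \ker D_X$. Since $D_X$ is self-adjoint on the compact manifold $X$, its spectrum is real and discrete, and the critical weights of $\dirac$ are therefore real and discrete with $0$ isolated. Fix $\delta > 0$ smaller than the distance from $0$ to the next critical weight.

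The kernel of $\dirac$ in $C^{k+1,\alpha}_{+\delta}$ consists of exponentially decaying solutions, which are precisely the $L^2$ solutions (any $L^2$ harmonic spinor on an ACyl manifold decays exponentially), so it equals $\harm^S_{L^2}$. In $C^{k+1,\alpha}_{-\delta}$, the asymptotic expansion theorem underlying Proposition \ref{p:acyl:hodge} represents every kernel element as a bounded translation-invariant piece plus an exponentially decaying remainder; conversely every bounded harmonic spinor lies in this space, so the kernel equals $\harm^S_\bd$. Formal self-adjointness of $\dirac$ combined with the weighted $L^2$ pairing yields the duality $\coker(\dirac_{+\delta}) \cong \ker(\dirac_{-\delta})$, so
\begin{equation*}
\ind(\dirac_{+\delta}) = \dim \harm^S_{L^2} - \dim \harm^S_\bd = -\ind(\dirac_{-\delta}).
\end{equation*}
The Lockhart-McOwen jump formula equates $\ind(\dirac_{-\delta}) - \ind(\dirac_{+\delta})$ with the dimension of the space of polynomial-in-$t$ solutions at the critical weight $0$; self-adjointness of $D_X$ forbids genuine generalised eigenvectors at $0$, so this dimension equals $\dim\ker D_X = \dim \harm^S_\cyl$. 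Combining these identities gives $2\dim(\harm^S_\bd/\harm^S_{L^2}) = \dim \harm^S_\cyl$, which is (i).

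For part (ii) I would apply the Bochner technique adapted to the ACyl setting. From the Lichnerowicz formula $\dirac^2 = \nabla^*\nabla + \tfrac{1}{4}R$ applied to $s \in \harm^S_\bd$, the standard pointwise computation gives
\begin{equation*}
\tfrac{1}{2}\Delta|s|^2 + |\nabla s|^2 + \tfrac{1}{4}R|s|^2 = 0, \qquad \Delta = d^*d.
\end{equation*}
Integrating over $M_T \equiv M \cap \{t \leq T\}$ via Green's identity yields
\begin{equation*}
\int_{M_T}\bigl(|\nabla s|^2 + \tfrac{1}{4}R|s|^2\bigr) = \tfrac{1}{2}\int_{\{T\}\times X}\partial_t|s|^2,
\end{equation*}
so it suffices to show the right-hand side vanishes as $T \to \infty$. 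Applying the very same Lichnerowicz argument on the compact cross-section $X$ to the asymptotic limit $s_\cyl \in \ker D_X$, and using $R_X = R|_X \geq 0$, forces $\nabla_X s_\cyl = 0$; hence $s_\cyl$ is parallel on the entire cylinder and $|s_\cyl|^2$ is constant in $t$. Exponential decay of $s - s_\cyl$ then gives $\partial_t|s|^2 = O(e^{-\delta t})$, so the boundary integral vanishes in the limit. The two nonnegative integrands on the left must therefore vanish identically, proving $\nabla s = 0$. The main technical point is the boundary term control at infinity, which reduces to first establishing that the asymptotic spinor $s_\cyl$ is \emph{parallel} on $X$ rather than merely harmonic with respect to $D_X$.
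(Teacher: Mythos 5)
Your proof is correct and takes essentially the same route the paper indicates: for (i) the paper simply cites APS (3.25) or, equivalently, the Lockhart--McOwen index-jump formula together with the weighted duality $\coker(\dirac_{+\delta}) \cong \ker(\dirac_{-\delta})$, which is exactly the argument you spell out; for (ii) it states only ``Lichnerowicz plus integration by parts'', which you carry out. The one minor remark is that your detour through proving $s_\cyl$ parallel is not actually needed to kill the boundary term: $s_\cyl$ is translation-invariant by definition of $\harm^S_\cyl$, so $|s_\cyl|^2$ is automatically independent of $t$ and $\partial_t|s|^2 = O(e^{-\delta t})$ already follows from the exponential convergence $s \to s_\cyl$ in $C^1$.
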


\begin{proof}
(i) is essentially an instance of (3.25) in Atiyah-Patodi-Singer
\cite{atiyah75}. It can also be deduced from the previously mentioned index
formula \cite[Thm 7.4]{lockhart85}; see \cite[\S 2.3.5]{jnthesis} for
details.
(ii) follows from the Lichnerowicz formula and integration by parts.
\end{proof}

\begin{remark}
Proposition \ref{prop:spcount}(i) has a rather simple intuitive meaning. Let $\mathcal{A}$ be an asymptotically translation-invariant elliptic differential operator. Given any subexponentially growing solution to $\mathcal{A}_\infty(u_\infty) = 0$ on $\R \times X$, we can try to find a solution to $\mathcal{A}(u) = 0$ on $M$ with
asymptotic limit $u_\infty$. Obstructions arise by taking the $L^2$ inner product of the equation $\mathcal{A}(u) = 0$ with subexponentially growing elements of ${\rm ker}(\mathcal{A}^*)$ and integrating by parts. Thus, 
if $\mathcal{A}= \mathcal{A}^*$, then we expect that exactly half of all possible solutions $u_\infty$ can be extended in this way. For instance, if $\mathcal{A}$ is the Laplacian on scalars and if $X$ is connected, then clearly the constant functions on $\R \times X$ extend harmonically to $M$ but $t$ does not because otherwise $0 = \int_M \Delta u = \lim_{T \to \infty} \int_{X} \frac{\partial u}{\partial t}(T,x)\,dx = {\rm Vol}(X)$. 
\end{remark}

The strength of Proposition \ref{prop:spcount} is well-illustrated by the following
``positive mass theorem'', which is an immediate consequence by \cite{wang89} (but will not be used in the rest of this paper).

\begin{corollary}
Let $M$ be an {\rm ACyl} spin manifold of nonnegative scalar curvature. If the end $M_\infty$ is Ricci-flat of special holonomy, then so is $M$.
\end{corollary}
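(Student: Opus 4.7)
The plan is to produce a nonzero parallel spinor on $M$ and then appeal to the classification of Riemannian holonomies admitting parallel spinors due to Wang \cite{wang89}. All of the analytic work has already been done in Proposition \ref{prop:spcount}; the argument is essentially a packaging exercise.

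First I would observe that $\harm^S_\cyl \neq 0$. Each Ricci-flat special holonomy group on Berger's list ($\sunit{n}$, $\quat{n}$, $G_2$, $\mathrm{Spin}(7)$) stabilises a nonzero spinor, so the cross-section $X$ carries a nonzero parallel spinor $\psi_X$; its pullback along the projection $\R \times X \to X$ is translation-invariant, parallel, and solves the Dirac equation on $M_\infty$, so $0 \neq \psi_X \in \harm^S_\cyl$.

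Next, Proposition \ref{prop:spcount}(i) gives
\[
\dim(\harm^S_{\rm bd}/\harm^S_{L^2}) = \half \dim \harm^S_\cyl > 0,
\]
so the asymptotic-limit map $\harm^S_{\rm bd} \to \harm^S_\cyl$ (whose kernel is precisely $\harm^S_{L^2}$) has image of positive dimension. Pick a bounded harmonic spinor $\psi$ on $M$ whose asymptotic limit is nonzero. Proposition \ref{prop:spcount}(ii) combined with the nonnegativity of the scalar curvature of $M$ then forces $\psi$ to be parallel; its pointwise norm is therefore a constant, and nonzero because it matches that of the asymptotic limit.

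Finally, from a nowhere-vanishing parallel spinor on $M$ I would conclude simultaneously that $M$ is Ricci-flat (the Weitzenb\"ock identity $\textup{Ric}(Y) \cdot \psi = -2\sum_i e_i \cdot R(e_i,Y)\psi = 0$ together with injectivity of Clifford multiplication by nonzero covectors gives $\textup{Ric} \equiv 0$) and that $\Hol(M)$ is contained in the stabiliser of $\psi$, which is a special holonomy group by Wang's classification \cite{wang89}. There is no substantive obstacle here: the only point worth a sentence of explanation is ensuring that $\psi$ can be chosen with nonzero asymptotic limit, and this is immediate from the image-dimension statement in Proposition \ref{prop:spcount}(i).
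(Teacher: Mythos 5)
Your proof is correct and is exactly the argument the paper has in mind: the paper records this corollary as an "immediate consequence" of Proposition \ref{prop:spcount} together with \cite{wang89}, and your write-up simply makes that chain explicit (nonzero translation-invariant parallel spinor on $M_\infty$, hence $\harm^S_\cyl \neq 0$, hence by (i) a bounded harmonic spinor with nonzero asymptotic limit, which (ii) forces to be parallel and of constant nonzero norm, whence Ricci-flatness and special holonomy by Wang). No gaps.
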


\subsection{Structure of Ricci-flat ACyl manifolds}
The goal here is to extend the structure theorem for compact Ricci-flat manifolds of
Remark \ref{r:basic}\ref{it:toruscover} to the ACyl setting, proving Theorem \ref{thm:acyl_structure}. 
As in the compact case, this will be a relatively easy consequence of a more general result 
(Theorem \ref{thm:acyl_nonneg}) for manifolds with ${\rm Ric} \geq 0$. At the end of this section, we also collect some closely related remarks that will not be used in rest of this paper, but are useful in \cite[\S 2]{chnp1} and
\cite[\S 3]{chnp2}. All coverings in this section will be Riemannian, and all deck transformations are isometries.

The theory in the compact case rests on a subtle observation due to Cheeger-Gromoll in the proof of \cite[Thm 3]{cheeger71}. The following proposition states a slight extension of their idea that we require for our ACyl structure theorem. We give the proof for convenience.

\begin{prop} 
\label{prop:cpt_nonneg}
A complete Riemannian manifold $Z$ with ${\rm Ric} \geq 0$ admits a cocompact
isometric group action if and only if $Z$ splits as the isometric product of
$\R^k$ and some compact manifold.
In this case, every cocompact and discrete subgroup $\Gamma \subset \iso(Z)$
contains a normal subgroup $\Psi$ of finite index such that $[\Psi,\Psi]$ is
finite and $\Psi/[\Psi,\Psi]$ is a free abelian group of rank $k$.
\end{prop}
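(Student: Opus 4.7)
The ``if'' direction is immediate, as $\Z^k$ acts cocompactly by translations on the $\R^k$-factor, so the plan is to focus on the ``only if'' direction. First I would apply the Cheeger-Gromoll splitting theorem to write $Z = \R^k \times Y$ isometrically, where $Y$ contains no line. Because $Y$ has no line (equivalently, no Euclidean de Rham factor), the $\R^k$-summand admits an intrinsic description---namely as the linear span of all lines through any given point---and so is preserved by every isometry of $Z$, yielding $\iso(Z) = \iso(\R^k) \times \iso(Y)$. Letting $G \subset \iso(Z)$ denote the given cocompact group with image $G_Y$ in $\iso(Y)$, the surjection $G \backslash Z \twoheadrightarrow G_Y \backslash Y$ shows that $G_Y$ acts cocompactly on $Y$.

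The main geometric step is then to prove that $Y$ itself is compact. I would argue by contradiction: if $Y$ were noncompact, then cocompactness of $G_Y$ would force the orbit through any basepoint $p\in Y$ to be unbounded, so I can find a sequence $h_n\in G_Y$ with $L_n\equiv d(p,h_n(p))\to\infty$. Let $\gamma_n$ be a minimising unit-speed geodesic from $p$ to $h_n(p)$ and $m_n$ its midpoint, and pick $g_n\in G_Y$ with $d(g_n(m_n),p)\le R$, where $R$ is the uniform diameter constant associated with the cocompact action. Then $g_n\circ\gamma_n$ is a minimising geodesic of length $L_n$ whose midpoint lies in the fixed compact ball $B_R(p)$, so Arzel\`a-Ascoli extracts a subsequential limit that is a complete minimising geodesic, i.e.\ a line in $Y$. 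This contradicts the no-line property of $Y$, and so $Y$ is compact.

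For the algebraic statement, I would take a discrete cocompact $\Gamma \subset \iso(Z) = \iso(\R^k) \times \iso(Y)$. Since $Y$ is now compact, $\iso(Y)$ is a compact Lie group; thus the kernel $K$ of the projection $\pi\colon\Gamma \to \iso(\R^k)$ is discrete in $\iso(Y)$, hence finite, while $\pi(\Gamma)$ is discrete (bounded sets in $\iso(\R^k)$ pull back to sets with compact closure in the product and therefore contain only finitely many $\Gamma$-elements) and cocompact in $\iso(\R^k)$ via the surjection $\Gamma\backslash Z \twoheadrightarrow \pi(\Gamma) \backslash \R^k$. Applying Bieberbach's first theorem to the crystallographic group $\pi(\Gamma)$ produces a normal finite-index subgroup $T\triangleleft\pi(\Gamma)$ of pure translations with $T\cong\Z^k$, so that $\Psi_0\equiv\pi^{-1}(T)$ is normal of finite index in $\Gamma$ and fits into $1\to K \to \Psi_0 \to T \to 1$ with $K$ finite. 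Since $T$ consists of translations in $\R^k$ while $K$ lies in $\iso(Y)$, commutators of $\Psi_0$-elements have trivial $\R^k$-component, so $[\Psi_0,\Psi_0]\subset K$ is finite. A standard algebraic refinement---passing to a further characteristic finite-index subgroup to kill the finite torsion part of the abelianization $\Psi_0/[\Psi_0,\Psi_0]$---then yields the required $\Psi$, still normal in $\Gamma$, with $[\Psi,\Psi]$ finite and $\Psi/[\Psi,\Psi]$ free abelian of rank $k$.

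The hardest step will be the geodesic-limit argument establishing compactness of $Y$. The midpoint trick is essential: it ensures that the translated minimising geodesics $g_n\circ\gamma_n$ themselves, not just their endpoint sequences, remain in a fixed compact region, so that Arzel\`a-Ascoli produces a limit defined on all of $\R$ rather than merely on a ray; this is where the no-line characterisation of $Y$---and hence the Cheeger-Gromoll splitting hypothesis---genuinely enters. Once compactness of $Y$ is in hand, the remaining algebraic manipulations are routine and reduce to Bieberbach theory together with the basic structure of crystallographic groups.
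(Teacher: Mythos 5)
Your proof is correct and follows essentially the same route as the paper's: split off the line-free factor $Y$ via Cheeger--Gromoll, show $Y$ is compact by translating an escaping geodesic back into a fixed compact set and extracting a line (the paper shifts a ray where you take midpoints of long orbit segments --- a cosmetic difference), and then run the Bieberbach argument on the projection of $\Gamma$ to $\iso(\R^k)$. One small caveat: your final ``kill the torsion'' refinement is not automatic, since passing to a finite-index subgroup also shrinks the commutator subgroup and the new abelianisation can acquire torsion; but the paper's own proof likewise only establishes that $\Psi/[\Psi,\Psi]$ is abelian of rank $k$, which is all that is used later (Remark \ref{r:grouptheory}).
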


\begin{proof}
By the splitting theorem, $Z = \R^k \times Z'$, where $Z'$ contains no lines, and we must show that $Z'$ is necessarily compact. Notice that ${\rm Iso}(Z) = {\rm Iso}(\R^k) \times {\rm Iso}(Z')$ because $Z'$ is line-free. Since ${\rm Iso}(Z)$ acts cocompactly on $Z$,
there exists a compact set $F' \subset Z'$ whose translates under ${\rm Iso}(Z')$ 
cover $Z'$. If $Z'$ itself was noncompact, then there would exist a nontrivial ray $\gamma: [0,\infty) \to Z'$. For each $n \in \N$ there exists
$g_n \in {\rm Iso}(Z')$ with $g_n(\gamma(n)) \in F'$. We can assume that $g_n(\gamma(n))$ has a limit as $n \to \infty$ because $F'$ is compact. But then the shifted rays $\gamma_n: [-n,\infty) \to Z'$ defined by $\gamma_n(t) = g_n(\gamma(t + n))$ subconverge to a line locally uniformly in $t$, which contradicts the definition of $Z'$. 

Let $\Gamma'$ be the kernel of the projection of $\Gamma$ to ${\rm Iso}(\R^k)$. Then $\Gamma'$ is a discrete subgroup of ${\rm Iso}(Z')$, hence finite. On the other hand, the \emph{image} $\Gamma''$ of the projection of $\Gamma$ to ${\rm Iso}(\R^k)$ acts cocompactly
on $\R^k$, and is discrete because ${\rm Iso}(Z')$ is compact and $\Gamma$ is discrete. Thus $\Gamma''$ is a Bieberbach group. In other words, we have an exact sequence $1 \to \Gamma' \to \Gamma \to \Gamma'' \to 1$ with $\Gamma'$ finite, and a split exact sequence $1 \to \Z^k \to \Gamma'' \to \Gamma''' \to 1$ with $\Gamma'''$ a finite subgroup of O$(k)$ acting on $\Z^k$ in the standard fashion. The preimage $\Psi$ of $\Z^k$ under $\Gamma \to \Gamma''$ is then normal of finite index in $\Gamma$. Also, we have an exact sequence
$1 \to \Psi' \to \Psi \to \Z^k \to 1$, so that $[\Psi,\Psi] \subset \Psi' \subset \Gamma'$ must be finite.
\end{proof}

\begin{remark}\label{r:grouptheory}
Given a finitely generated group $\Gamma$ with a finite index normal subgroup $\Psi$ such that $[\Psi,\Psi]$ is
finite, the rank $k < \infty$ of the abelian group $\Psi/[\Psi,\Psi]$ only depends on $\Gamma$; in fact, $k$ is equal to the volume growth exponent of the Cayley graph of $\Gamma$. 
\end{remark}

By applying Proposition \ref{prop:cpt_nonneg} to various normal covers of the cross-section of an ACyl manifold
and bringing in some ACyl Hodge theory from Section \ref{s:acyl_analysis}, we will prove the following key

\begin{theorem}
\label{thm:acyl_nonneg}
Let $M$ be {\rm ACyl} with ${\rm Ric} \geq 0$ and a single end. Then either $M$
is a $\Z_2$-quotient of a cylinder, or its universal cover is isometric
to $\bbr^k \times {M}'$, where ${M}'$ is {\rm ACyl} with a single end.
\end{theorem}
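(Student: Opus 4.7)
The plan is to apply the Cheeger-Gromoll splitting theorem iteratively to the universal cover $\tilde M$---which inherits ${\rm Ric} \geq 0$---to obtain $\tilde M = \R^k \times N$, with $N$ simply-connected, complete, line-free, and of nonnegative Ricci curvature. The key dichotomy is whether the cylindrical $t$-direction of the asymptotic end lifts (possibly modulo an $X$-direction) to a parallel vector field on $\tilde M$.

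To formalise this, I would note that every parallel $1$-form on $\tilde M$ has a well-defined asymptotic limit on the end, which is a cover of $M_\infty = \R \times X$; such a limit is automatically parallel and so has the shape $c\,dt + \alpha_X$ for some $c \in \R$ and parallel $\alpha_X$ on the cover of $X$. Let $L$ be the linear functional sending a parallel $1$-form on $\tilde M$ to its cylindrical coefficient $c$. In \textbf{Case B} (when $L \not\equiv 0$), pick a parallel $\alpha$ with $c \neq 0$; then $\alpha = df$ for some $f$ on the simply-connected $\tilde M$, and the universal bound $|f(p) - f(q)| \leq d(p,q)$ combined with $|f(\gamma(s)) - f(\gamma(0))| = s$ along the unit-speed integral curves $\gamma$ of $\alpha^\sharp$ shows each such $\gamma$ to be a \emph{line} in $\tilde M$. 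By Cheeger-Gromoll, $\tilde M$ then contains an $\R$-factor asymptotically aligned with $\partial_t$ and hence two cylindrical infinities; since $M$ has one end, some element of $\pi_1(M)$ must exchange them by a reflection $t \mapsto -t + c$, and a routine analysis---ruling out nontrivial discrete translations along the cylindrical direction, which would close it into an $S^1$ in contradiction with the ACyl hypothesis---identifies $M$ as a $\Z_2$-quotient of a cylinder.

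In \textbf{Case A} (when $L \equiv 0$), every parallel $1$-form on $\tilde M$ is asymptotically tangent to $X$, so the $\R^k$-factor is transverse to the cylindrical direction and $N$ retains the end. The cross-section of the end of $\tilde M = \R^k \times N$ is simultaneously $\R^k \times Y$ (with $Y$ the cross-section of $N$) and a cover $\tilde X / H$ of $X$; using $\tilde X = \R^{k_0} \times \bar X$ and the Bieberbach structure of $\pi_1(X)$ from Proposition \ref{prop:cpt_nonneg}, $H$ is virtually a subgroup of the translation lattice $\Z^{k_0}$, and the quotient splits as $\R^k \times (\text{compact})$ after identifying $\R^k$ with the $H$-free flat subspace. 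Hence $Y$ is compact and $N$ is ACyl, giving $\tilde M = \R^k \times M'$ with $M' = N$. The main technical obstacle is to ensure that the $\R^k$-factor of $\tilde M$ actually realises the full flat factor of the cross-section modulo $H$: this uses Proposition \ref{p:acyl:hodge}\ref{it:harm:dr:1end}--\ref{it:acyl:bochner} (bounded harmonic $1$-forms are parallel and compute $H^1$) together with an index-theoretic ``half-extension'' argument in the spirit of Proposition \ref{prop:spcount}, applied on the finite normal cover of $M$ corresponding to the Bieberbach subgroup $\Psi \lhd \pi_1(X)$ produced by Proposition \ref{prop:cpt_nonneg}.
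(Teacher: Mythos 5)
Your skeleton matches the paper's: dispose of the $\Z_2$-quotient case first, split $\tM$ off the parallel $1$-forms via de Rham/Cheeger--Gromoll, and apply Proposition \ref{prop:cpt_nonneg} to the lifted cross-section. But there is a genuine gap in Case A, at exactly the point you flag as ``the main technical obstacle'': nothing in your argument aligns the $\R^k$-factor of $\tM$ with the flat factor of $\tX = \R^\ell \times X'$. Matching the \emph{ranks} $k = \ell$ --- which is what Proposition \ref{p:acyl:hodge} together with the group theory of Proposition \ref{prop:cpt_nonneg} and Remark \ref{r:grouptheory} buys you, and what an index count in the spirit of Proposition \ref{prop:spcount} would also give --- does not show that the asymptotic limits of the $k$ parallel fields on $\tM$ are tangent to the $\R^\ell$-factor of $\tX$. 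A priori these limits are merely parallel fields on $\R^\ell \times X'$, and the compact line-free factor $X'$ can still carry parallel fields (a compact manifold has no lines but may well be a torus), so the limits could have nontrivial $X'$-components: think of the field $\partial_x + \partial_\theta$ on $\R \times \Sph^1$, whose orbits wind around and are not lines. If that happened, the cross-section of $M'$ would not be $X'$, and you could not conclude that it is compact, i.e.\ that $M'$ is ACyl. The paper closes this gap geometrically rather than Hodge- or index-theoretically: the orbits of the parallel fields in $\tM$ \emph{are} lines, the ACyl diffeomorphism lifts to an almost-isometry that is almost equivariant for the two actions (equation (\ref{e:approx_equiv_lifted})), and for $t$ large an orbit in $\tX$ with a nontrivial $X'$-component cannot map to an almost-line. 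This step is the heart of the proof and is absent from your proposal.

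Two secondary gaps. First, your Case B needs more than ``routine analysis'': working on the universal cover you must rule out the backward gradient flow of the potential $f$ escaping to infinity inside the (possibly unbounded) preimage of the bounded set $U$; the paper sidesteps this by running the two-ends argument on the cover corresponding to ${\rm im}(\pi_1(M_\infty) \to \pi_1(M))$, where Lemma \ref{l:top} and the splitting theorem immediately identify that cover with the full cylinder. Second, even once the cross-section $Y$ of $N$ is known to be compact, you still must show that $N$ minus its cylindrical end is \emph{bounded}, so that $N$ is genuinely ACyl with a single end; the paper does this by observing that otherwise $N$ would split as a cylinder, producing a deck-invariant unbounded Lipschitz function that descends to the bounded set $U$ --- an argument that crucially uses connectedness of the preimage of the end in $\tM$, which your Case A hypothesis gives you only after an extra appeal to the Case B analysis.
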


\begin{remark}
We will see in the proof that $k \geq b^1(M)$, but the inequality can be strict; this already happens in the compact case if 
$M$ is any compact flat $k$-manifold other than $\Tor^k$. However, $k$ equals $b^1$ of a certain finite normal cover of $M$ whose fundamental group has finite derived group.
\end{remark}

The structure theorem for Ricci-flat ACyl manifolds (Theorem \ref{thm:acyl_structure}) follows from this.

\begin{corollary}
\label{t:acyl_top}
Every Ricci-flat {\rm ACyl} manifold has a finite normal cover 
that splits isometrically as the product of a flat torus and a simply-connected Ricci-flat {\rm ACyl} manifold.
\end{corollary}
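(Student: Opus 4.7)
The plan is to deduce the corollary from Theorem \ref{thm:acyl_nonneg} via a Bieberbach-type analysis of the deck group action on the universal cover of $M$. First, if $M$ is a product cylinder (the exceptional case of Remark \ref{r:basic}\ref{it:split}), I can write $M = \R \times X$ and apply the compact structure theorem (Remark \ref{R:acyl:rf}\ref{it:toruscover}) to $X$: this yields a finite normal cover $\Tor^d \times X' \to X$ with $X'$ compact simply-connected, and the corresponding cover $\Tor^d \times (\R \times X') \to M$ is immediately of the required form.

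From now on assume $M$ has exactly one end, so Theorem \ref{thm:acyl_nonneg} applies. In both its alternatives I can write $\tilde M = \R^k \times M'$ uniformly, where $M'$ is simply-connected, Ricci-flat and line-free: in the main alternative $M'$ is itself one-ended ACyl (line-free because any line would split an extra $\R$-factor by Cheeger-Gromoll and produce a second end), and in the $\Z_2$-quotient-of-cylinder alternative the universal cover of the cylinder combined with Remark \ref{R:acyl:rf}\ref{it:toruscover} applied to its compact cross-section lets me take $M'$ to be compact simply-connected with no flat de Rham factor, which is line-free in the same sense. The pivotal step is then to show that \emph{${\rm Iso}(M')$ is finite} in either scenario. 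Discreteness follows from Proposition \ref{p:acyl:hodge}\ref{it:killing}: Killing fields are parallel in the Ricci-flat setting, and a parallel vector field on a line-free manifold vanishes. When $M'$ is compact, compactness of ${\rm Iso}(M')$ is automatic. When $M'$ is one-ended ACyl, I use that the Busemann function $b$ of a ray going out the unique end is proper on $M'$ (sublevels are compact) and that every isometry $\phi$ of $M'$ acts on $b$ by a constant translation, $b \circ \phi = b + c(\phi)$. Iterating $\phi^{\pm n}$ and using that $b$ is bounded below but unbounded above forces $c(\phi) = 0$, so every ${\rm Iso}(M')$-orbit lies in a compact level set of $b$; combined with the usual properness of the isometric action this gives ${\rm Iso}(M')$ compact, and hence finite.

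Once ${\rm Iso}(M')$ is known to be finite, the rest is essentially formal. The canonical de Rham splitting gives ${\rm Iso}(\tilde M) = {\rm Iso}(\R^k) \times {\rm Iso}(M')$, so the projection $\rho_2 : \Gamma := \pi_1(M) \to {\rm Iso}(M')$ has finite image and its kernel $\Gamma_1$ is a finite-index normal subgroup of $\Gamma$ embedding discretely into ${\rm Iso}(\R^k) = \R^k \rtimes {\rm O}(k)$. Since ${\rm O}(k)$ is compact, the translation subgroup $L := \Gamma_1 \cap \R^k$ has finite index in $\Gamma_1$, and since $\R^k$ is normal in ${\rm Iso}(\R^k)$, conjugation by any element of $\Gamma$ preserves $L$; hence $L$ is in fact normal in $\Gamma$. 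The finite normal cover $\tilde M / L = (\R^k/L) \times M'$ must itself be ACyl because it covers the ACyl manifold $M$, which forces $\R^k/L = \Tor^k$ in the one-ended $M'$ case (producing $\Tor^k \times M'$) or $\R^k/L = \Tor^{k-1} \times \R$ in the compact $M'$ case (producing $\Tor^{k-1} \times (\R \times M')$). Either way this is a finite normal cover of $M$ of the required form.

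The main difficulty I expect is the finiteness of ${\rm Iso}(M')$ in the one-ended ACyl case: the Busemann function argument is where single-endedness and ACyl asymptotics enter in an essential way. Everything else is a standard combination of the semidirect structure of ${\rm Iso}(\R^k)$ with classical Bieberbach theory.
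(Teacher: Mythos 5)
Your overall route is the paper's: reduce to the one-ended case, apply Theorem \ref{thm:acyl_nonneg} to get $\tilde M = \R^k \times M'$, show $\iso(M')$ is finite (discreteness from Proposition \ref{p:acyl:hodge}\ref{it:killing} plus compactness from bounded orbits), and then run a Bieberbach argument on the $\iso(\R^k)$-part of the deck group. Your Busemann-function proof that isometries of a one-ended ACyl manifold have bounded orbits is a reasonable self-contained substitute for the paper's citation of \cite[Lemma 3.6]{kovalev05}, and folding the cylinder/$\Z_2$-quotient cases into the same lattice argument is harmless.

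There is, however, one genuine gap. The sentence ``Since $\mathrm{O}(k)$ is compact, the translation subgroup $L := \Gamma_1 \cap \R^k$ has finite index in $\Gamma_1$'' is false for a general discrete subgroup of $\iso(\R^k) = \R^k \rtimes \sorth{k}$: the infinite cyclic group generated by a screw motion of $\R^3$ (translation composed with an irrational rotation) is discrete and free, yet contains no nontrivial translation, so $L$ is trivial of infinite index. What Bieberbach's first theorem actually requires is that $\Gamma_1$ act \emph{cocompactly} on $\R^k$, and nothing in your argument up to that point supplies cocompactness; worse, your only mechanism for detecting it --- ``$\tilde M/L$ must be ACyl because it covers $M$'' --- is invoked downstream, and a cover of an ACyl manifold is ACyl only when the cover is finite, which is exactly what is at stake. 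The repair is short: since $\iso(M')$ is finite, $\Gamma_1 = \ker(\Gamma \to \iso(M'))$ already has finite index in $\Gamma$, so $\tilde M/\Gamma_1 = (\R^k/\Gamma_1) \times M'$ is a finite cover of $M$ and hence ACyl; this forces $\R^k/\Gamma_1$ to be compact (otherwise the cross-section at infinity would be noncompact, or the volume growth superlinear), so $\Gamma_1$ is crystallographic and Bieberbach's first theorem then gives that $L$ is a full-rank lattice of finite index, normal in $\Gamma$ as you say. (The paper's own phrase ``discrete, hence a Bieberbach group'' is equally terse, but there the cocompactness is available from Proposition \ref{prop:cpt_nonneg} applied to the cross-section $\tilde X = \R^k \times X'$ in the proof of Theorem \ref{thm:acyl_nonneg}.) With that one step corrected, your argument goes through.
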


\begin{proof}
If $M$ is a cylinder or a $\Z_2$-quotient of one, then the claim follows from Remark \ref{r:basic}\ref{it:toruscover} applied to the cross-section. If not, then Theorem \ref{thm:acyl_nonneg} shows that the universal
cover $\tM$ of $M$ splits as an isometric product $\R^k \times M'$, where $M'$ is
ACyl with a single end. Thus, $\iso(\tM) = \iso(\R^k) \times \iso(M')$.
As $M'$ has a single end, the orbits of $\iso(M')$ are bounded, which
implies that $\iso(M')$ is compact. Therefore the projection of $\pi_1(M)$ to
$\iso(\bbr^k)$ is discrete, hence a Bieberbach group, so its projection to 
$\sorth{k} = \iso(\bbr^k)/\R^k$ is finite. Since 
$M'$ is simply-connected Ricci-flat, Proposition
\ref{p:acyl:hodge}\ref{it:killing} tells us that $\iso(M')$ is discrete, hence finite. The kernel
$\Gamma$ of the projection $\pi_1(M) \to \sorth{k} \times \iso(M')$ is therefore a finite
index normal subgroup of $\pi_1(M)$ whose image in $\iso(\R^k)$
acts on $\R^k$ as a full rank lattice of translations.
Thus $(\R^k/\Gamma) \times M'$ is a cover of the required form. 
\end{proof}

\begin{ex}
To appreciate the role that the Ricci-flat condition plays in this proof, it is helpful to consider the following (compact) example \cite[p. 440]{cg1}. Let $M$ be the mapping torus of a rotation of $\Sph^2$ by an irrational angle. Then $M$ is diffeomorphic to
$\Sph^1 \times \Sph^2$, ${\rm Ric}\, M \geq 0$, but no finite cover of $M$ 
splits
isometrically as $\Sph^1 \times \Sph^2$. The proof of Corollary \ref{t:acyl_top}
fails at the point where one uses that the isometry group of $M'$ is finite: the kernel of $\pi_1(M) \to \sorth k \times \iso(M')$ is trivial here.
\end{ex}

We preface the proof of Theorem \ref{thm:acyl_nonneg} with a simple lemma that
will be applied twice.

\begin{lemma}\label{l:top}
Let $Y$ be a connected manifold
and $i: X \to Y$ the inclusion of a connected open set. Let $G$ be a subgroup
of $\pi_1(Y)$ and $p:\widetilde{Y} \to Y$ the covering space with
characteristic group $G$. Then the number of connected components of
$p^{-1}(X)$ is equal to the index of $\langle G, i_*(\pi_1(X))\rangle$ in
$\pi_1(Y)$, and each such connected component is a covering of $X$ with
characteristic group $i_*^{-1}(G) \subset \pi_1(X)$.
\end{lemma}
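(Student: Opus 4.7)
The plan is to apply standard covering space theory. Fix a basepoint $y_0 \in X$ and a lift $\tilde y_0 \in p^{-1}(y_0)$, and set $H = i_*(\pi_1(X, y_0))$. Two preliminary observations are standard: the fiber $p^{-1}(y_0)$ is in canonical bijection with the right coset space $G \backslash \pi_1(Y, y_0)$, and under this identification the monodromy action of $\pi_1(Y, y_0)$ on the fiber is by right multiplication on cosets.

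For the component count, I would use path-lifting: two points of $p^{-1}(y_0)$ lie in the same connected component of $p^{-1}(X)$ if and only if they can be joined by a path in $p^{-1}(X)$, which projects to a loop in $X$ based at $y_0$. So the components of $p^{-1}(X)$ correspond bijectively to the $H$-orbits on $p^{-1}(y_0)$, equivalently to the double cosets in $G \backslash \pi_1(Y, y_0) / H$. Now---and this is the step that uses normality of $G$ in $\pi_1(Y)$, which is the hypothesis implicit in all applications of this lemma---$GH$ is a subgroup equal to $\langle G, H \rangle$, and $GgH = g \cdot GH$, so the double cosets become right cosets of $\langle G, H\rangle$, yielding the stated index formula.

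For the second assertion, pick any component $C$ of $p^{-1}(X)$ together with a basepoint $\tilde y_C \in C \cap p^{-1}(y_0)$. Then $p|_C \colon C \to X$ is a covering, and its characteristic group at $\tilde y_C$ consists by definition of those $[\alpha] \in \pi_1(X, y_0)$ whose $\tilde y_C$-lift closes up, i.e.\ those with $i_*[\alpha]$ in the stabilizer of $\tilde y_C$ under the monodromy action on $G \backslash \pi_1(Y, y_0)$. This stabilizer is a conjugate of $G$, which collapses to $G$ itself by normality, giving the characteristic group as $i_*^{-1}(G)$. The main obstacle in the proof is really just keeping careful track of which subgroups one obtains at each choice of basepoint in the cover---normality of $G$ is exactly what ensures both the clean index formula and the basepoint-independence of the characteristic group across different components of $p^{-1}(X)$.
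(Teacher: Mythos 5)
The paper states this lemma without proof (it is introduced only as ``a simple lemma''), so there is no argument of the authors' to compare yours against; your proof is the standard covering-space argument that was surely intended, and it is correct. The identification of the components of $p^{-1}(X)$ with the double cosets $G\backslash\pi_1(Y,y_0)/H$, where $H=i_*\pi_1(X,y_0)$, and of the characteristic group of a component with $i_*^{-1}$ of the monodromy stabiliser of a fibre point in that component, is exactly right. You are also right to flag that the collapse of double cosets to cosets of $\langle G,H\rangle$, and the basepoint-independence of the characteristic group across components, genuinely require an extra hypothesis such as normality of $G$: as literally stated the lemma fails for non-normal $G$ (take $\pi_1(Y)=S_3$, $G=\langle(12)\rangle$, $H=\langle(13)\rangle$, realised by a loop in a manifold with fundamental group $S_3$: there are two double cosets, while $\langle G,H\rangle=S_3$ has index one). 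Your claim that normality holds in ``all applications'' is not quite accurate, but the applications survive anyway: in the proof of Theorem \ref{thm:acyl_nonneg} one has $G=\{1\}$, and in Lemma \ref{lem:cyl_cover} one has $H\subseteq G$, where only the facts that $p^{-1}(X)$ has at least two components when $[\pi_1(Y):G]\geq 2$ (the identity double coset $GeH=G$ is then proper) and that the component through the basepoint has characteristic group $i_*^{-1}(G)$ are used. One cosmetic slip: for $G$ normal, $GgH=g\,GH$ is a \emph{left} coset of $\langle G,H\rangle$, not a right coset; the count is of course unaffected.
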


The first application deserves separate mention since it will itself be applied repeatedly.

\begin{lemma}
\label{lem:cyl_cover}
If $M$ is {\rm ACyl} with ${\rm Ric} \geq 0$ and a single end, then either $\pi_1(M_\cyl) \to \pi_1(M)$ is onto and every finite cover of $M$
has a single end, or else the image has index $2$ and
$M = M_\cyl/\Z_2$.
\end{lemma}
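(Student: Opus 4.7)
The plan is to analyse the intermediate cover of $M$ corresponding to the subgroup $H := i_*(\pi_1(M_\cyl)) \subset \pi_1(M)$ and to deduce the dichotomy from a count of its ends. First, applying Lemma \ref{l:top} with $Y = M$, with $X$ equal to the cylindrical end, and with $G = H$ identifies the preimage of the end in the cover $\hat{M} \to M$ as $[\pi_1(M):H]$ disjoint copies of the half-cylinder $[0,\infty) \times X$, since the characteristic group of each component is $i_*^{-1}(H) = \pi_1(M_\cyl)$ itself. Consequently $\hat{M}$ is again ACyl with the same cross-section $X$, and has exactly $[\pi_1(M):H]$ ends.

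The crux is to show $[\pi_1(M):H] \in \{1,2\}$. Assuming $\hat{M}$ has at least two ends, I would pick sequences of points escaping to infinity in two distinct ends and extract a line as a subsequential limit of minimising geodesics between them. The Cheeger-Gromoll splitting theorem then yields $\hat{M} \cong \R \times N$ isometrically for some complete $N$. If $N$ were non-compact then $\R \times N$ would have only a single end, so $N$ must be compact; matching this global product structure with the ACyl asymptotic model forces $N$ to be isometric to $X$, whence $\hat{M} \cong M_\cyl$ and this has exactly two ends. Therefore $[\pi_1(M):H] = 2$, and since any index-$2$ subgroup is automatically normal the covering is Galois with deck group $\Z_2$, giving $M = M_\cyl/\Z_2$.

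In the remaining case $H = \pi_1(M)$, for any finite-index subgroup $K \subset \pi_1(M)$ a second application of Lemma \ref{l:top} computes the number of components of the preimage of the end in the cover $M_K$ as $[\pi_1(M):\langle K,H\rangle] = 1$. Since $K$ has finite index in $\pi_1(M)$, the group $i_*^{-1}(K)$ has finite index in $\pi_1(X)$, so this unique preimage component is a half-cylinder over a finite, hence compact, cover of $X$ and constitutes the sole end of $M_K$.

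The main obstacle I anticipate is the identification $\hat{M} \cong M_\cyl$ after Cheeger-Gromoll splitting: one must verify that the abstract $\R$-factor produced by the splitting theorem is compatible with the cylindrical coordinate inherited from the ACyl structure, so that the two product decompositions coincide up to isometry. Compactness of $N$ reduces this to the observation that an ACyl manifold which is globally a Riemannian product of $\R$ with a compact manifold has no choice but to be isometric to its own model cylinder.
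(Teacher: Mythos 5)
Your proof is correct and follows essentially the same route as the paper's: pass to the intermediate cover whose characteristic group is the image of $\pi_1(M_\infty)$, apply Lemma \ref{l:top} to identify one cylindrical end per coset, and invoke the Cheeger--Gromoll splitting theorem to force that cover to be the full cylinder $M_\infty$ (hence index exactly $2$) whenever there is more than one end. You are in fact slightly more thorough than the paper, which leaves the ``every finite cover has a single end'' half of the surjective case implicit.
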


\begin{proof}
If $\pi_1(M_\cyl) \to \pi_1(M)$ is not surjective, consider the cover
$\tM \to M$ with characteristic group equal to the image. By Lemma \ref{l:top},
$\tM$ has at least two cylindrical ends on which the covering map is a
diffeomorphism onto $M_\infty$. Thus, by the splitting theorem,
$\tM = M_\infty$, and $M = M_\infty/\Z_2$.
\end{proof}

\begin{proof}[Proof of Theorem \ref{thm:acyl_nonneg}]
Write $M_\infty = \R \times X$ for the end of $M$. By Lemma
\ref{lem:cyl_cover}, we can assume that $\pi_1(M_\cyl) \to \pi_1(M)$ is
surjective. By Proposition
\ref{prop:cpt_nonneg} applied to the universal cover of $X$, 
$\pi_1(M_\infty)$ contains a finite index normal subgroup whose
derived group is {finite}.
Since $\pi_1(M_\infty)$ surjects onto $\pi_1(M)$, the image
$\Psi$ of this subgroup in $\pi_1(M)$ is still normal of finite index and has finite derived group.
Replacing $M$ by its finite normal cover with characteristic group $\Psi$, which is still ACyl with a single end, we can thus assume without loss that $\pi_1(M)$ itself has finite derived group. 

Let $k \in \N_0$ denote the rank of the abelianisation of $\pi_1(M)$.
Then in particular $b^1(M) = k$, and so Proposition
\ref{p:acyl:hodge}\ref{it:harm:dr:1end}-\ref{it:acyl:bochner} tells us that
$k$ is also the number of parallel vector fields on $M$. Thus, by de Rham's
theorem, the universal cover $\tM$ splits as an isometric product $\tM = \R^k
\times M'$, where $M'$ is complete and simply-connected. A priori $M'$ could
split off further line factors, but our goal is to show that this does not
happen and moreover that $M'$ is ACyl with a single end.

The parallel vector fields on $M$ form a $k$-dimensional abelian Lie algebra
$\aalg$ of Killing fields on $M$.
Sending each element of $\aalg$ to its asymptotic
limit under the inverse ACyl map $\Phi^{-1}$ of Definition \ref{d:ACyl},
we obtain an isomorphism $\phi: \aalg_\infty \to \aalg$ with an abelian
Lie algebra $\aalg_\infty$ of parallel Killing fields on
$M_\infty = \R \times X$. The elements of $\aalg_\infty$ have no $\partial_t$-components (or in other words, are tangent to $X$) since otherwise
$\iso(M)$ would have unbounded orbits, which is not possible since $M$ has only one end \cite[Lemma 3.6]{kovalev05}.
Notice also that $\Phi$ is asymptotically $\phi$-equivariant: we have
\begin{equation}\label{e:approx_equiv}
{\rm dist}_M ( \Phi(t,\exp(a)x), \, \exp(\phi(a))\Phi(t,x) )
\leq C|a|e^{-\delta t}
\end{equation}
for all $a \in \mathfrak{a}_\infty$, simply by how $\phi$ was defined.

Elements of $\aalg$ pull back to parallel Killing fields on $\tM$. By
construction, the Lie algebra $\tilde\aalg$ of all such pull-backs consists of the
parallel vector fields tangent to the $\bbr^k$ factor in $\tM = \bbr^k \times M'$. 
We can assume that the domain $U$ of Definition \ref{d:ACyl} is $\mathfrak{a}$-invariant. Put $E \equiv M \setminus U$ and let $\tilde{E}$ be the
preimage of $E$ under the covering map $\tM \to M$. By $\tilde{\mathfrak{a}}$-invariance, we have $\tilde{E} = \bbr^k \times E'$ with
$E' \subset M'$.

Lemma \ref{l:top} tells us that $\tilde{E}$ is a connected normal covering space of $E$ with characteristic group $\ker \left(\pi_1(M_\infty) \to \pi_1(M) \right)$ and deck group $\pi_1(M)$. There certainly exists a connected normal covering space $\tilde{X} \to X$ such that there exists a diffeomorphism $\tilde{\Phi}: [0,\infty) \times \tX \to \tilde{E}$ covering $\Phi$. Let $\tilde{\aalg}_\infty$ be the pull-back of $\aalg_\cyl$ to $\tX$. Then $\tilde{\aalg}_\cyl$ is an abelian Lie algebra of parallel Killing fields on $\tX$, $\phi$ induces an isomorphism $\tilde{\phi}: \tilde{\aalg}_\cyl \to \tilde{\aalg}$, 
and (\ref{e:approx_equiv}) implies that
\begin{equation}\label{e:approx_equiv_lifted}
{\rm dist}_{\tM}(\tilde{\Phi}(t,\exp(\tilde{a})\tilde{x}), \,
\exp(\tilde{\phi}(\tilde{a}))\tilde{\Phi}(t,\tilde{x}))
\leq C|\tilde{a}|e^{-\delta t}
\end{equation}
for all $\tilde{a} \in \tilde{\mathfrak{a}}_\infty$; to prove (\ref{e:approx_equiv_lifted}), fix $N \gg 1$ depending only on $\tilde{a}$ such that, for every $\tilde{y} \in \tilde{X}$, $\exp(\frac{\tilde{a}}{N})\tilde{y}$ is closer to $\tilde{y}$ than any deck group translate of $\tilde{y}$, and then apply (\ref{e:approx_equiv}) $N$ times.

We now wish to use these preparations to argue that $\tX = \R^k \times X'$ with $X'$ \emph{compact}, and that $\tilde{\Phi}$ induces an ACyl diffeomorphism $\Phi': [0,\infty) \times X' \to E'$ in the sense of Definition \ref{d:ACyl}. The key point of this argument is the following: $\pi_1(M)$ acts isometrically on $\tX$ with compact quotient $X$. Thus, Proposition \ref{prop:cpt_nonneg} tells us that 
 $\tilde{X} = \R^\ell \times X'$ with $X'$ {compact} for some $\ell \in \N_0$, and that $\pi_1(M)$ has a finite index normal subgroup with finite derived group whose abelianisation has rank $\ell$. But recall that we arranged for $\pi_1(M)$ itself to have finite derived group; thus, $\ell = k$ by Remark \ref{r:grouptheory}.

Now the basic idea for splitting off $\Phi'$ from $\tilde{\Phi}$ is as follows. 
Since $\tilde{\Phi}$ is an almost isometry, it sends lines to almost lines. But the lines in $\tM$ are $\tilde{\aalg}$-orbits and $\tilde{\Phi}$ is almost equivariant, so the lines in $\tX$ are $\tilde{\aalg}_\cyl$-orbits (approximately---hence precisely) even though a priori we only knew that $\tilde{\aalg}_\cyl$ consisted of parallel vector fields and $X'$ might have parallel vector fields too. Using the
approximate isometry and equivariance properties of $\tilde{\Phi}$ again, it quickly follows that $\tilde{\Phi}$ acts as an almost isometry on the $\R^k$ factor and as an ACyl diffeomorphism on the $[0,\infty) \times X'$ factor.

In fact we will argue slightly differently. If $\tilde{a} \in \tilde{\aalg}_\cyl$ had a nontrivial $X'$-component, then the curves $\gamma_t(s) \equiv (t,\exp(s\tilde{a})\tilde{x})$ would not be lines, \ie there exist $s_0 > 0$ and $\theta < 1$ independent of $t$ such that the distance between $\gamma_t(0)$ and $\gamma_t(s_0)$ is $\theta s_0$. But $\tilde{\aalg}$ is tangent to the $\R^k$ factor in $\tilde{E}$, so (\ref{e:approx_equiv_lifted}) shows that $\tilde{\Phi} \circ \gamma_t: [0,s_0] \to \tilde{E}$ remains $O(s_0 e^{-\delta t})$ close to a line segment of length $s_0$. This means that if $\sigma$ is any other curve in $\tX$ connecting $\gamma_t(0)$ and $\gamma_t(s_0)$, then $\tilde{\Phi} \circ \sigma$ has length at least $s_0 - O(s_0e^{-\delta t})$. Now $\tilde\Phi^* g_{\tilde M} =  dt^2 + g_{\tX} + O(e^{-\delta t})$, so the length of $\sigma$ itself is at least $s_0 - O(s_0e^{-\delta t})$.  Taking $\sigma$ to be distance minimising and $t$ sufficiently large relative to $\theta$ and $s_0$, we get a contradiction. 

Now we know that the $\tilde{\mathfrak{a}}_\infty$-orbits are the lines in $\tX = \R^k \times X'$. Fixing linear coordinates $y$ on $\R^k$ and writing $x$ for points in $X'$ for simplicity, (\ref{e:approx_equiv_lifted}) then implies that
\begin{equation}\label{e:splitting_phi}
\tilde{\Phi}(t, y, x) = (\tilde{\Phi}(t,0,x)_{\R^k} + \tilde{\phi}(y), \tilde{\Phi}(t,0,x)_{M'}) + O(|y|e^{-\delta t}).
\end{equation}
Here we have decomposed the target $\tM = \R^k \times M'$. Notice that (\ref{e:approx_equiv_lifted}) provides $O(|y|e^{-\delta t})$ control on the errors only in a distance sense; we will take it for granted that if $|y| \ll 1$ and $t \gg 1$ then this can be upgraded to $C^\infty$ control in local charts (alternatively we could arrange for $\tilde{\Phi}$ to be \emph{precisely} equivariant but this requires similar technical work to make precise). It then follows from (\ref{e:splitting_phi}) and the almost isometry property $\tilde{\Phi}^*[dy^2 + g_{M'}] = [dt^2 + dy^2 + g_{X'}] + O(e^{-\delta t})$ that
\begin{equation}
\tilde{\Phi}(t,0,x)_{\R^k}  = const + O(e^{-\delta t}), \;\, ({\Phi}')^*[g_{M'}] = [dt^2 + g_{X'}] + O(e^{-\delta t}),
\end{equation}
where we have defined $\Phi'(t,x) \equiv \tilde{\Phi}(t,0,x)_{M'}$.

To conclude that $M'$ is an ACyl manifold in the sense of Definition
\ref{d:ACyl}, it remains to prove that $M'\setminus E'$ is bounded. If not, then $M'$ would be a cylinder by the splitting theorem, \ie there exists a function $t': M' \to \R$ with $\nabla^2t' = 0$ which is exponentially asymptotic to $t: E' \to [0,\infty)$ on $E'$. Notice that the trivial extension of $t'$ to $\tM = \R^k \times M'$ is deck group invariant because $\tilde{E}$ and $t$ are. But then $t'$ pushes down to an unbounded Lipschitz function on the bounded region $U \subset M$. (This whole argument crucially exploits that $\tilde{E}$ is connected by our initial reductions.)\end{proof}

With the proof of the main theorem of this section out of the way, we now
explain some related but more elementary observations that are needed in
\cite[\S 2]{chnp1} and \cite[\S 3]{chnp2}.

\begin{prop}
\label{prop:acyl_cy_top}
Let $M$ be {\rm ACyl} Calabi-Yau and let $n = \dim_{\C}{M}$.
\begin{enumerate}
\item \label{it:ACyl_pi1} 
If $\pi_1(M)$ is finite then $M$ has
a single end and
$\pi_1(M_\cyl) \to \pi_1(M)$ is surjective.
\item \label{it:dim3}
If $\pi_1(M)$ is finite and $n = 3$ then $M$ has holonomy $\sunitary3$.
\item \label{it:pi1_end}
If $M_\cyl = \bbr \times \Sph^1 \times D$ with $\pi_1(D)$ finite then
either $\pi_1(M)$ is finite or $M = M_\cyl / \Z_2$.
\end{enumerate}
\end{prop}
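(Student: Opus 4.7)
My plan is built around Lemma \ref{lem:cyl_cover}, which offers a basic dichotomy for the three parts: either $\pi_1(M_\cyl) \to \pi_1(M)$ is surjective (and every finite cover of $M$ then has a single end), or $M$ is itself a $\Z_2$-quotient of its asymptotic cylinder.

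For \ref{it:ACyl_pi1}, the surjective alternative of the dichotomy gives both conclusions for free. To exclude the quotient alternative under $|\pi_1(M)| < \infty$, I would exploit the asymptotic K\"ahler structure on $M_\cyl = \R \times X$: writing $J_\cyl \partial_t = f\partial_t + W$ with $f$ constant and $W$ a parallel vector field on $X$, the identities $J_\cyl^2 = -\mathrm{id}$ and $|J_\cyl \partial_t| = 1$ force $f = 0$ and $|W| = 1$. Hence $b^1(X) \geq 1$ and $\pi_1(X) = \pi_1(M_\cyl)$ is infinite, contradicting its being an index-two subgroup of a finite $\pi_1(M)$.

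For \ref{it:dim3}, by \ref{it:ACyl_pi1} the universal cover $\tM$ is a simply-connected single-ended ACyl Calabi-Yau $3$-fold. I would decompose $\tM = \C^a \times M_1 \times \cdots \times M_r$ via de Rham with each $M_i$ irreducible non-flat K\"ahler. Linear volume growth of ACyl manifolds kills the Euclidean factor ($a = 0$); a single-end count rules out more than one non-compact $M_i$; and since no compact simply-connected Ricci-flat K\"ahler curve exists and $\C$ is flat (hence absorbed into the Euclidean factor), a small case analysis on complex dimensions forces $\tM = M_1$ irreducible of complex dimension $3$. Berger's theorem plus non-existence of $\quat{\tfrac{3}{2}}$ give $\Hol(\tM) = \sunitary3$, and the Calabi-Yau assumption on $M$ together with $\Hol_0(M) = \Hol(\tM)$ upgrades this to $\Hol(M) = \sunitary3$.

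For \ref{it:pi1_end}, the quotient alternative of the dichotomy is already the desired conclusion. Otherwise $\pi_1(M_\cyl) = \Z \times \pi_1(D)$ surjects onto $\pi_1(M)$; since $\pi_1(D)$ is finite, the commutator subgroup of $\pi_1(M)$ is finite, so $\pi_1(M)$ is finite iff $b^1(M) = 0$. To show $b^1(M) = 0$, I would identify $H^1(M;\R)$ with the space of parallel vector fields on $M$ via Proposition \ref{p:acyl:hodge}\ref{it:harm:dr:1end} and \ref{p:acyl:hodge}\ref{it:acyl:bochner}; using $b^1(D) = 0$ (from Remark \ref{r:basic}\ref{it:toruscover}), each such field has asymptotic limit $a\partial_t + c\partial_\theta$. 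Stokes' theorem applied to the coclosed form $*V$ on $\{t < T\}$ gives $\int_{\{t=T\}} *V = 0$, whose asymptotic value as $T \to \infty$ is proportional to $a$, forcing $a = 0$; running the same argument on the parallel field $JV$ and using the computation from \ref{it:ACyl_pi1} that $J_\cyl$ interchanges $\partial_t$ and $\partial_\theta$ up to sign and a nonzero scale forces $c = 0$, so $V = 0$. The main technical point I expect to watch carefully is that the boundary integral in Stokes must actually converge to its formal asymptotic value as $T \to \infty$, which depends on the exponential decay built into the ACyl hypothesis rather than merely uniform $C^0$ convergence.
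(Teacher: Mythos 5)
Your proposal is correct. Parts \ref{it:ACyl_pi1} and \ref{it:dim3} essentially reproduce the paper's argument: the dichotomy of Lemma \ref{lem:cyl_cover} combined with the observation that $J_\cyl\partial_t$ is a nonzero parallel vector field tangent to the cross-section (forcing $b^1(X)\geq 1$, hence $\pi_1(X)$ infinite), and for \ref{it:dim3} the de Rham decomposition plus Berger. One small omission in \ref{it:ACyl_pi1}: before invoking Lemma \ref{lem:cyl_cover} you must rule out $M$ having several ends, i.e.\ being a product cylinder; your $b^1(X)\geq 1$ computation disposes of that case too, so just say so. In \ref{it:dim3}, your phrase ``a single-end count rules out more than one non-compact $M_i$'' is not quite the right reason (a product of two noncompact manifolds still has one end); but the step is moot in complex dimension $3$ once the flat $1$-dimensional factors are excluded, or can be justified by volume growth, so nothing is lost.

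Part \ref{it:pi1_end} is where you genuinely diverge from the paper, and your route works. The paper, in the non-quotient alternative, assumes $\pi_1(M)$ infinite, invokes Corollary \ref{t:acyl_top} to pass to a finite cover $\Tor^k\times M'$, pins down $k=1$ from $\pi_1(D)$ finite, and then uses the evenness of the space of parallel $1$-forms on a K\"ahler manifold to force a parallel $1$-form on the simply-connected factor $M'$, whence $M'$ has two ends and $M$ is a $\Z_2$-quotient of a cylinder. You instead show directly that $b^1(M)=0$ in the surjective alternative: identify $H^1(M;\R)$ with parallel vector fields via Proposition \ref{p:acyl:hodge}\ref{it:harm:dr:1end}--\ref{it:acyl:bochner}, use $b^1(D)=0$ to write the asymptotic limit as $a\partial_t+c\partial_\theta$, and kill $a$ and $c$ by the flux integral $\int_{\{t=T\}}*V^\flat=0$ applied to $V$ and to $JV$; the group-theoretic reduction (finite commutator subgroup, so $\pi_1(M)$ finite iff $b^1(M)=0$) is also sound. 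Your approach is more elementary in that it avoids the structure theorem (Theorem \ref{thm:acyl_structure}) entirely, at the cost of using the explicit product form of the cross-section; the paper's argument is softer and fits into its general covering-space machinery. Your closing caveat about the convergence of the boundary integral is exactly the point to check, and the exponential ACyl decay handles it.
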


\begin{proof}
\ref{it:ACyl_pi1}
This follows from Lemma \ref{lem:cyl_cover} if we can show that every cover $\tM \to M$ has a single end. 
But otherwise $\tM$ would be a Calabi-Yau cylinder $\bbr \times \tX$ by the splitting theorem, and $b^1(\tX) = 0$ since $\pi_1(\tM)$ is finite, 
whereas $Jdt$ is a nontrivial harmonic 1-form on~$\tX$.

\ref{it:dim3}
Let $\tM$ be the universal cover of $M$. By \ref{it:ACyl_pi1}, this
is ACyl with a single end. 
If $\Hol(\tM)$ were a proper subgroup of $\sunitary3$ then by the de Rham
theorem $\tM$ would be a product
of simply-connected {lower-}dimensional submanifolds with even smaller holonomies, so one of these factors would be 
$\bbc$, contradicting that $\tM$ is ACyl. Now $\Hol(\tM) = \sunitary3$ implies $\Hol(M) = \sunitary3$ by \cite[4.1.10]{jnthesis}.

\ref{it:pi1_end}
If $\pi_1(M)$ is infinite then Corollary \ref{t:acyl_top} shows that $M$ has a finite cover $\tM = \Tor^k \times M'$ with $k \geq 1$ and
$M'$ simply-connected ACyl Calabi-Yau. Let $X'$ denote the cross-section of $M'$; this may not be connected. Then $\Tor^k \times X'$ covers $\Sph^1 \times D$, so $\pi_1(D)$ finite implies $k = 1$.
Since $\tM$ is K\"ahler, the space of parallel $1$-forms on $\tM$ inherits a
complex structure and therefore has even dimension. Hence $M'$ has a parallel
1-form.
Since $b^1(M') = 0$, $M'$ must have more than one end by Proposition \ref{p:acyl:hodge}\ref{it:harm:dr:1end}, hence splits as a cylinder, and so Lemma \ref{lem:cyl_cover} tells us that $M = M_\infty/\Z_2$.
\end{proof}

The simplest example of an ACyl Calabi-Yau manifold $M = M_\infty/\Z_2$ as in Proposition \ref{prop:acyl_cy_top}\ref{it:pi1_end}
is $M = (\bbr \times \Sph^1 \times D)/(-1,-1, \tau)$ with $D$ a $K3$ surface and $\tau$ a free anti-symplectic
involution of $D$; see Remark \ref{r:endrestrictions}. There is exactly one deformation family of such pairs $(D,\tau)$ (``Enriques surfaces''), so this is essentially the unique $M$ of this kind with $n \leq 3$.

\subsection{Holonomy considerations}\label{s:acyl_holonomy}
The main content of this section
is the proof of Theorem \ref{T:acyl:holo} but first we need to recall some background material.

The first ingredient is the well-known relation between special holonomy and
parallel spinors~\cite{wang89}. If $Z$ is a Riemannian spin manifold, then we
write $s(Z)$ for the number of parallel spinors on~$Z$.
A K\"ahler manifold $Z$ with trivial canonical bundle is spin and its spinor bundle is naturally identified with the total bundle of $(0,p)$-forms \cite[1.156]{besse87}, so that parallel spinors correspond to parallel \mbox{$(0,p)$-forms} and we always have $s(Z) \geq 1$ from $p = 0$. Let $d = \dim_\C Z$. If $\Hol(Z) \subseteq {\rm SU}(d)$ then $s(Z) \geq 2$ from the conjugate holomorphic volume form except if $Z$ is a point. If $Z$ is even hyper-K\"ahler, 
\ie $\Hol(Z) \subseteq {\rm Sp}(\frac{d}{2})$, then $s(Z) \geq \frac{d}{2} + 1$ from the powers of the conjugate holomorphic symplectic form.
If $\Hol(Z)$ is \emph{equal} to ${\rm SU}(d)$ or $ {\rm Sp}(\frac{d}{2})$,
then $s(Z) = 2$ if $d > 0$ and $s(Z) = \frac{d}{2} + 1$, respectively \cite{wang89}; 
this is a purely representation-theoretic fact. (The converse is false---in Remark \ref{r:endrestrictions}\ref{it:end_restr_2} we mentioned a K\"ahler $4$-fold with holonomy $({\rm SU}(2) \times {\rm SU}(2)) \rtimes \Z_2$ and $s = 2$.) Finally, it is helpful to keep in mind that \emph{all} holomorphic forms on a compact K\"ahler manifold with ${\rm Ric} \geq 0$ are parallel by the Bochner method; this still holds for all \emph{bounded} holomorphic forms in the ACyl case.

The second ingredient is the following structure theorem for compact Ricci-flat manifolds.

\begin{prop}[Calabi, Fischer-Wolf]\label{p:cfw}
Let $X$ be compact connected Ricci-flat and set $k = b^1(X)$. There exists a flat torus $\Tor^k$ and a finite normal Riemannian covering $\Tor^k \times X' \to X$ such that:
\begin{enumerate}
\item The deck group can be written as $\{(h(\psi),\psi): \psi \in \Psi\}$, where $\Psi$ is a finite group of isometries of $X'$ and $h$ is an injective homomorphism of $\Psi$ into the translation group of $\Tor^k$. 
\item $X'$ is compact connected Ricci-flat and carries no $\Psi$-invariant parallel vector fields.
\end{enumerate}
\end{prop}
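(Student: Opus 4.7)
The plan rests on two ingredients: Bochner's formula on compact Ricci-flat manifolds (already invoked in Proposition \ref{p:acyl:hodge}\ref{it:killing}) and the structural Proposition \ref{prop:cpt_nonneg} applied to the cocompact $\pi_1(X)$-action on the universal cover. Bochner will identify parallel 1-forms with harmonic 1-forms, so the space of parallel 1-forms on $X$ has dimension $k$; it will also force every Killing field on a compact Ricci-flat manifold to be parallel.

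First I would apply the Cheeger-Gromoll splitting theorem together with Proposition \ref{prop:cpt_nonneg} to split the universal cover isometrically as $\tilde X = \R^m \times N_0$ with $N_0$ compact and line-free. By de Rham uniqueness, $\iso(\tilde X) = \iso(\R^m) \times \iso(N_0)$. Since $N_0$ is line-free Ricci-flat, it has no nonzero parallel vector fields, so Bochner makes $\iso(N_0)$ zero-dimensional and hence finite. The projection $\pi_\R : \Gamma := \pi_1(X) \to \iso(\R^m)$ is then a Bieberbach group with translation lattice $L \subset \R^m$ and finite point group $\Gamma''' \subset O(m)$, while $\pi_N : \Gamma \to \iso(N_0)$ has finite image. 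Parallel 1-forms on $X$ correspond to $\Gamma'''$-fixed vectors, so $V := (\R^m)^{\Gamma'''}$ has dimension $k$; setting $W = V^\perp$ and averaging over $\Gamma'''$ shows $L \cap V$ and $L \cap W$ are full-rank lattices in $V$ and $W$, so $L' := (L \cap V) \oplus (L \cap W)$ is a $\Gamma'''$-invariant finite-index sublattice of $L$.

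Next I would define $\Gamma_* := \pi_\R^{-1}(L') \cap \ker \pi_N$; this is normal of finite index in $\Gamma$ because $L'$ is $\Gamma'''$-invariant and $\pi_N$ has finite image, and its elements act on $\tilde X$ as pure translations by $L'$ on $\R^m$ and trivially on $N_0$. The quotient $\tilde X / \Gamma_* = \Tor^k \times \hat X' \to X$ with $\hat X' := (W/(L \cap W)) \times N_0$ is then a finite cover on which the residual deck group $G := \Gamma/\Gamma_*$ acts as $(u_V \bmod L \cap V, (Rw + u_W, \sigma n))$ for an element projecting to $\pi_\R = (u_V + u_W, R)$ and $\pi_N = \sigma$. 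Letting $h : G \to \Tor^k$ denote the first-factor projection and $K := \ker h$, $K$ acts trivially on $\Tor^k$ and hence freely on the connected manifold $\hat X'$; after setting $X' := \hat X'/K$ and $\Psi := G/K$, both $h : \Psi \to \Tor^k$ and $\Psi \to \iso(X')$ are injective (the latter by connectedness of $\hat X'$), giving deck group $\{(h(\psi),\psi): \psi \in \Psi\}$.

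Finally, property (ii) will follow because a $\Psi$-invariant parallel vector field on $X'$ would lift to a $G$-invariant parallel vector field on $\hat X'$, which corresponds to a vector in $W$ fixed under the rotation action of $G$; but $R(G) = \Gamma'''$ and by definition $V$ is the full $\Gamma'''$-fixed subspace, so no such nonzero vector in $W$ exists. The hardest part will be the group-theoretic bookkeeping around $\Gamma_*$ and the refinement $G \twoheadrightarrow \Psi$: verifying that the deck action really splits as claimed, that $X'$ is connected and Ricci-flat, and that both $h$ and the action on $X'$ end up faithful after the final collapse.
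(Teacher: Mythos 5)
The paper itself offers no proof of this proposition --- it simply cites Fischer--Wolf \cite[Thm 4.1]{fischer} --- so your argument is necessarily an independent route. It is the natural in-house one given Proposition \ref{prop:cpt_nonneg}: split the universal cover as $\R^m \times N_0$, run Bieberbach theory on the Euclidean factor, and identify $b^1(X)$ with the fixed subspace $V$ of the point group $\Gamma'''$. The skeleton is sound (and all the individual ingredients --- $\iso(N_0)$ finite, $\dim V = k$, full-rank lattices $L\cap V$ and $L\cap W$ by averaging, the splitting of the residual deck action over $V\oplus W$ because $\Gamma'''$ fixes $V$ pointwise, and the proof of (ii) --- are correct). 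But there is one genuine gap in the construction of $\Gamma_*$.

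You need $\pi_\R(\Gamma_*) = L'$ in order to identify $\tilde X/\Gamma_*$ with $\bigl(V/(L\cap V)\bigr)\times\bigl(W/(L\cap W)\bigr)\times N_0$. However, $\Gamma_* = \pi_\R^{-1}(L')\cap\ker\pi_N$ only gives $\pi_\R(\Gamma_*)\subseteq L'$: a translation $t\in L'$ is realised by \emph{some} $\gamma\in\Gamma$, but every such $\gamma$ may have nontrivial image in $\iso(N_0)$. Already for $\Gamma = \langle(\tau_1,\sigma)\rangle$ acting on $\R\times N_0$ with $\sigma$ of order $2$ one gets $\pi_\R(\Gamma_*) = 2\Z\subsetneq\Z = L'$. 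In general $\pi_\R(\Gamma_*)$ is a finite-index sublattice of $L'$ which need not decompose as the direct sum of its intersections with $V$ and $W$, so the quotient need not split as claimed. The fix is to do the averaging one step later: first replace $L$ by the finite-index sublattice $\Lambda = \pi_\R\bigl(\ker\pi_N\cap\pi_\R^{-1}(L)\bigr)$ of translations actually realised inside $\ker\pi_N$ (this is $\Gamma'''$-invariant because $\ker\pi_N\cap\pi_\R^{-1}(L)$ is normal in $\Gamma$), then set $L' = (\Lambda\cap V)\oplus(\Lambda\cap W)$ and $\Gamma_* = \pi_\R^{-1}(L')\cap\ker\pi_N$; now $\pi_\R$ maps $\Gamma_*$ isomorphically onto $L'$ and the product decomposition holds.

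A smaller point: the injectivity of $\Psi\to\iso(X')$ does not follow from connectedness of $\hat X'$ alone. That argument shows only that an element of $G$ acting trivially on $X' = \hat X'/K$ acts on $\hat X'$ like some element of $K$; you must still rule out a nontrivial element of $G$ acting trivially on $\hat X'$ but by a nonzero translation of $\Tor^k$. This does hold with your choice of $\Gamma_*$: such an element is forced to have trivial rotation part and trivial $N_0$-part, hence is a pure translation lying in $L'$, hence already lies in $\Gamma_*$, so $\ker\bigl(G\to\iso(\hat X')\bigr)$ is trivial --- but this observation needs to be made, since without it the deck group would only embed in $\iso(X')$ after further collapsing the torus factor.
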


This could be deduced from Remark \ref{R:acyl:rf}\ref{it:toruscover}
(\ie \cite[Thm 4.5]{fischer}) but is also proved directly in
\mbox{\cite[Thm 4.1]{fischer}}
without relying on the splitting theorem of \cite{cheeger71}. The proposition
generalises an earlier result for compact flat manifolds due to Calabi;
according to \cite{fischer}, Calabi was independently aware of this extension
to the compact Ricci-flat case, but had only published the result for $X$
K\"ahler.

\begin{proof}[Proof of Theorem \ref{T:acyl:holo}]
Since $M$ is simply-connected irreducible, either $\Hol(M) = \sunitaryn$, or $n$ is even and $\Hol(M) = {\rm Sp}(\frac{n}{2})$. The proof proceeds by analysing these two cases separately but in parallel, 
based on the facts reviewed above and on the following consequence of  Proposition \ref{prop:spcount}:
\begin{equation}\label{e:spinorcount}
s(M) = \frac{1}{2}s(M_\infty).
\end{equation}
The main aim is to rule out the Sp$(\frac{n}{2})$ case and show that in the SU$(n)$ case, $b^1(X)$ (which is always at least $1$ because of the parallel $1$-form $Jdt$) has to be exactly $1$. The latter then already implies a large part of the statement of Theorem \ref{T:acyl:holo}\ref{cylstruc} by applying Proposition \ref{p:cfw} for $k = 1$.

The analysis in fact relies on the conclusion of Proposition \ref{p:cfw}, \ie that we have a finite normal Riemannian covering $\Tor^k \times X' \to X$ whose deck group $\Psi$ is a finite group of isometries of $X'$ acting effectively on $\Tor^k$ by translations, and that $\Psi$ does not preserve any parallel vector fields on $X'$. We will use this to construct parallel spinors on $M_\infty$---almost always more than (\ref{e:spinorcount}) allows. \medskip\

\noindent {\bf Case 1: Holonomy} {\rm SU}$(n)$. Then $M$ has exactly two parallel holomorphic forms, so (\ref{e:spinorcount}) tells us that $s(M_\infty) = 4$. Now since $M_\infty$ is K\"ahler with respect to $J_\infty$, the parallel vector fields on $M_\infty$ are  closed under $J_\infty$, and so both $\bbr \times \Tor^k$ and $X'$ are $\Psi$-invariantly K\"ahler. Thus, $k = 2\ell + 1$ for some
$\ell \in \N_0$ and $\R \times \Tor^k$ is $\Psi$-invariantly Calabi-Yau. But this implies that $X'$ is not just Ricci-flat and $\Psi$-invariantly K\"ahler, but $\Psi$-invariantly Calabi-Yau---by contracting the holomorphic $n$-form pulled back from $M_\infty$ with the holomorphic $(\ell + 1)$-form on $\R \times \Tor^k$. We see that $\R \times \Tor^k$ has $2^{\ell + 1}$ parallel holomorphic $\Psi$-invariant forms, and $X'$ has at least $2$ unless $X'$ is a point, when there is only one. Thus, $s(M_\infty) \geq 2^{\ell + 2}$ if $X'$ is not a point, and $s(M_\infty) \geq 2^{\ell + 1}$ if $X'$ is a point. But $s(M_\infty) = 4$, and hence $\ell = 0$, $k = 1$, unless $\ell = 1$, $k = 3$, $n = 2$; we explicitly excluded the latter case.

If $k = 1$, then $\Psi$ is a finite subgroup of U$(1)$, so $\Psi = \langle \iota \rangle$ for some finite order isometry $\iota$ of $X'$. Moreover, we already know that $\iota$ preserves the complex structure and holomorphic volume form. Now $X'$ can have more parallel $(p,0)$-forms with $p > 0$ (\eg parallel vector fields), but if any of those were $\Psi$-invariant, this would immediately contradict the above counting inequalities. \medskip\

\noindent {\bf Case 2: Holonomy} Sp$(\frac{n}{2})$. In this case, $s(M_\infty) = n + 2$. Since $M_\infty$ is hyper-K\"ahler, the parallel vector fields on $M_\infty$ are closed under $I_\infty, J_\infty, K_\infty$, so $\bbr \times \Tor^k$ and $X'$ are  themselves $\Psi$-invariantly 
hyper-K\"ahler.
In particular, $k = 4\ell + 3$ for some $\ell \in \N_0$, and there are now even more $\Psi$-invariant
parallel holomorphic forms than before (but also more on $M_\infty$ to begin with):
$2^{2\ell + 2}$ on the $\R \times \Tor^k$ factor and at least
$\frac{n}{2} - \ell$ on the $X'$ factor (which equals $1$ if $X'$ is a point).
As before we deduce that $n + 2 \geq 2^{2\ell + 2}(\frac{n}{2}-\ell)$. We 
now argue that this leaves no possibility except for
$\ell = 0$, $k = 3$, $n = 2$; but this is the excluded case.
If the inequality fails for some $\ell$ and
$n$ then it also fails for the same $\ell$ and all larger $n$. But
$n \geq 2\ell + 2$, and the inequality does fail for $n = 2\ell + 2$ unless
$\ell = 0$. If $\ell = 0$ then $k = 3$, and the inequality clearly holds for
$n = 2$ but fails for all larger $n$.
\end{proof}

\begin{remark}
A similar argument of counting parallel spinors was used in
\cite[Thm 4.1.19]{jnthesis} to give a criterion for an ACyl 8-manifold
to have holonomy ${\rm Spin}(7)$.
\end{remark}

\section{Complex analytic compactifications}
\label{sec:cptfy}

\subsection{Proof of Theorem \ref{t:cptfy} modulo technical results}\label{s:cptfy_prelim}

 Let $M$ be simply-connected irreducible ACyl Calabi-Yau of complex dimension $n > 2$. By Theorem \ref{T:acyl:holo}\ref{nohk}, $M$ has holonomy SU$(n)$; hence there exists
precisely one parallel complex structure $J$ on $M$ up to sign.
Theorem \ref{T:acyl:holo}\ref{cylstruc} tells us that the cylindrical end
$M_\infty$ has a finite cover $\tM_\infty$ biholomorphic to
$\C^* \times D$ for some compact Ricci-flat K\"ahler manifold
$D$. Thus, $\tM_\infty$ can be compactified as $\C \times D$.
One would then expect that $M$ itself has a holomorphic
compactification $\oM$. This is true, but not obvious; it is also not obvious that $\oM$ is K\"ahler. 
However, once we know this, Theorem \ref{t:cptfy} follows reasonably quickly.

We begin by stating the technical compactification results. This requires some terminology. Let $\Delta$
denote the unit disc in $\C$ and put $\Delta^* = \Delta \setminus \{0\}$. Let $D$ be a compact complex manifold and $g_D$ an arbitrary Hermitian metric on $D$. Let $M_\infty^+ = \bbr^+ \times \Sph^1 \times D$ with product complex
structure $J_\infty$ and Hermitian metric $g_\cyl = dt^2 + d\anglen^2 + g_D$, where $\theta \in \Sph^1 = \R/2\pi\Z$ and $J_\cyl(\partial_t) = \partial_\theta$.

\begin{theorem}
\label{thm:cptfy}
Let $J$ be an integrable complex structure on $M_\cyl^+$ such that $J - J_\infty = O(e^{-\delta  t})$ with respect to $g_\cyl$ as $t \to +\infty$, including all covariant derivatives, for some $\delta > 0$.
Then there exists a diffeomorphism
$\Psi: M_\cyl^+ \to \Delta^* \times D$ such that $\Psi_*J$ extends as an integrable complex structure on $\Delta \times D$. Moreover, the submanifold $\{0\} \times D$ is complex and biholomorphic to $D$ with respect to this extension, and its normal
bundle is trivial as a holomorphic line bundle on $D$.
\end{theorem}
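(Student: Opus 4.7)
The plan is to construct a $J$-holomorphic function $w$ on $M_\cyl^+$ that closely approximates $w_\cyl := e^{-t - i\anglen}$ at infinity, and then to use $w$ together with a smooth trivialization of its level sets to define $\Psi$; the compactification $\Delta \times D$ will emerge by completing $w$ across its missing zero value. In the model case, the identification $(t, \anglen, x) \mapsto (w_\cyl, x)$ is already a biholomorphism $(M_\cyl^+, J_\cyl) \to \Delta^* \times D$ with the obvious holomorphic compactification, so the task is to transport this picture through an exponentially small deformation of the complex structure.

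First I would construct $w$. Since $w_\cyl$ is $J_\cyl$-holomorphic and $J - J_\cyl = O(e^{-\delta t})$ in $C^\infty$, the $(0,1)$-form $\bar\partial_J w_\cyl$ decays like $O(e^{-(1+\delta)t})$. I would solve $\bar\partial_J u = -\bar\partial_J w_\cyl$ for $u \in C^\infty_{1+\delta'}(M_\cyl^+)$ with $0 < \delta' \leq \delta$ chosen so that $1+\delta'$ is not a critical weight, by applying the right-inverse construction of Proposition \ref{p:mazya} to an asymptotically translation-invariant elliptic extension of $\bar\partial_J$ (for instance the Hodge--Kodaira Laplacian on functions, followed by $\bar\partial_J^*$). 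Setting $w := w_\cyl + u$ yields a $J$-holomorphic function with $|w - w_\cyl| = O(e^{-(1+\delta')t})$, so $dw$ is nowhere zero near infinity, $|w| \to 0$ as $t \to \infty$, and $w$ has winding number one around each circle fibre. For $t$ sufficiently large, $w : M_\cyl^+ \to \Delta^*$ is then a proper submersion whose fibres $D_c := \{w = c\}$ are compact complex $J$-submanifolds diffeomorphic to $D$, by deformation from the reference fibration defined by $w_\cyl$. Flowing a reference fibre along a smooth horizontal distribution transverse to $\ker dw$ produces a smooth trivialization $\Psi : M_\cyl^+ \to \Delta^* \times D$ whose first component is $w$.

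The hardest and final step is to show that $\Psi_* J$ extends smoothly as an integrable complex structure across $\{0\} \times D$. Because $w$ is $J$-holomorphic, $\Psi_* J$ already makes the first coordinate holomorphic, so it is determined by the induced family of complex structures on the fibres, encoded by a Beltrami tensor $\mu$ on the second factor depending on $w$. The exponential decay of $J - J_\cyl$ only yields $|\mu| = O(|w|^\delta)$, which is H\"older but \emph{not} smooth at $w = 0$; this is the principal obstacle. To upgrade to a $C^\infty$ extension, the key point is that $w$ being $J$-holomorphic forces $\mu$ to be holomorphic in the $w$-direction, so $\mu$ satisfies a $\bar\partial$-equation in $w$ along each slice $\{(w, x_0) : w \in \Delta^*\}$. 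Combined with the Newlander--Nirenberg integrability relation $\bar\partial \mu + \tfrac{1}{2}[\mu, \mu] = 0$, Cauchy-type estimates along these slices together with iteratively solving $\bar\partial$-problems on $D$ for successive Taylor coefficients in $w$ yield a genuine $C^\infty$ Taylor expansion of $\mu$ at $w = 0$, with $\mu|_{w=0} = 0$. Once this is secured, $\Psi_* J$ extends smoothly and integrably across $\{0\} \times D$, restricts there to $J_D$, and $w$ serves as a global holomorphic defining function for $\{0\} \times D$ whose differential trivializes the holomorphic normal bundle.
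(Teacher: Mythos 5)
Your proposal has two genuine gaps, and the first is fatal. The very first step---constructing a global $J$-holomorphic function $w$ on $M_\cyl^+$ with $w - e^{-t-i\anglen} = O(e^{-(1+\delta')t})$---amounts, after compactification, to trivialising the line bundle $\mathcal{O}(D)$ on an entire neighbourhood of the compactifying divisor $\{0\}\times D$ (Observation \ref{obs:fibred}), which is strictly stronger than the triviality of the normal bundle $\mathcal{O}_D(D)$ that the theorem asserts. The obstruction to lifting such a trivialisation from the $k$-th infinitesimal neighbourhood to the $(k+1)$-st lies in $H^1(D,\mathcal{O}_D)=H^{0,1}(D)$, which need not vanish here ($D$ can be a torus, as in Example \ref{ex:end}); Remark \ref{r:fibering_obstructions} exhibits a two-dimensional formal obstruction space of exactly this type, and Remark \ref{r:fibration}\ref{it:nonfibration} together with the Concluding Remarks states that we do not know whether such a defining function (equivalently, a fibration near the divisor) exists in general and suspect it is not deformation-stable. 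Concretely, $\bar{\partial}_J u = -\bar{\partial}_J(e^{-t-i\anglen})$ is overdetermined; the trivial solution $u=-e^{-t-i\anglen}$ decays only like $e^{-t}$, and producing a solution decaying \emph{faster} is precisely the obstructed problem. The Laplacian-plus-$\bar{\partial}^*$ device does not return a solution of the original first-order equation without an integration by parts on a manifold with boundary, and that is exactly where the cohomological obstruction enters. So you are attempting to prove something stronger than the theorem, and stronger than what is known to be true.

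Second, even granting $w$: a complex structure on $\Delta^*\times D$ for which $w$ is holomorphic and the fibres $\{c\}\times D$ are complex is \emph{not} determined by the fibrewise Beltrami tensor $\mu$ alone. The $T^*\Delta\otimes TD$ block of $\Psi_*J$---equivalently, the failure of your chosen smooth transverse distribution to be $J$-invariant---is independent data, and by the norm comparison \eqref{e:compare_norms} it is precisely the block that a priori blows up like $|w|^{\delta-1}$; with an arbitrary transverse distribution, $\Psi_*J$ need not even extend continuously to $\{0\}\times D$. Controlling this block is the heart of the theorem. Our Step 1 gauge-fixes in the opposite way, making the \emph{horizontal} punctured disks $J$-holomorphic via a fixed-point argument for the Cauchy--Riemann equation on $\R^+\times\Sph^1$ with values in $M_\cyl^+$---a problem that is always unobstructed because the domain is one-dimensional---which kills the dangerous block identically and leaves only $C^{0,\delta}$-bounded components, to which the torsion-free equation and the Nijenhuis--Woolf theorem are then applied. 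Your regularity argument for $\mu$ is in the right spirit (compare Step 2 and Remark \ref{r:alter_reg}), but it is aimed at the component that was never the difficulty.
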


\begin{theorem}\label{t:kfy}
In the setting of Theorem \ref{thm:cptfy}, assume in addition that there exists
a $J$-K\"ahler form $\omega$ on $M_\infty^+$ such that
$\omega - \omega_\infty = O(e^{-\delta t})$ as $t \to +\infty$. Then
$\Delta \times D$ admits a $\Psi_*J$-K\"ahler form which coincides with
$\Psi_*\omega$ on $\{\frac{1}{2} < |w| < 1\}\times D$, where $w$ denotes the usual
complex coordinate on $\Delta$. 
\end{theorem}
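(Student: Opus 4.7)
The plan is: construct a smooth reference $\Psi_*J$-K\"ahler form $\omega_0$ on $\Delta \times D$ whose restriction to the annulus $A = \{\tfrac12 < |w| < 1\} \times D$ has the same cohomology class as $\Psi_*\omega$; apply a $\partial\bar\partial$-lemma on the cylindrical end to write $\Psi_*\omega - \omega_0 = i\partial\bar\partial\phi$ with exponentially decaying $\phi$; then glue via a cut-off.

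First I construct $\omega_0$. By Theorem \ref{thm:cptfy}, $\{0\} \times D$ is a compact K\"ahler submanifold of $(\Delta \times D, \Psi_*J)$ with holomorphically trivial normal bundle, so a standard construction yields a smooth K\"ahler form $\omega_0$ on $\Delta \times D$ (after possibly shrinking $\Delta$) with $[\omega_0|_{\{0\}\times D}] = [\omega_D]$: start from a smooth Hermitian extension of $\omega_D$ and add a large multiple of $i\partial\bar\partial\rho$ for a suitable strictly plurisubharmonic function $\rho$ defined near $\{0\} \times D$. Next I match cohomology classes on $A$. On the cylinder one has $\omega_\infty = dt \wedge d\theta + \omega_D = d(t\,d\theta) + \omega_D$, so $[\omega_\infty] = [\omega_D] \in H^2(\Sph^1 \times D) \cong H^1(D) \oplus H^2(D)$; and since $\omega - \omega_\infty$ is closed and $O(e^{-\delta t})$, its integral over any $2$-cycle in $M_\infty^+$ is translation-invariant in $t$ and decays to zero as the cycle is pushed toward $t = +\infty$, so $[\omega - \omega_\infty] = 0$. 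Hence $[\Psi_*\omega|_A] = [\omega_D] = [\omega_0|_A]$. With matching classes I solve $\Psi_*\omega - \omega_0 = i\partial\bar\partial\phi$ on $\Delta^* \times D$ by pulling back to $M_\infty^+$ and applying the Lockhart-McOwen theory of Proposition \ref{thm:lockhart} in exponentially weighted H\"older spaces, obtaining $\phi \in C^{k,\alpha}_{\delta'}$ for some $\delta' \in (0,\delta)$ with all derivatives of corresponding decay.

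Finally I glue. Let $\chi(|w|)$ be a smooth cut-off with $\chi \equiv 1$ for $|w| \geq \tfrac12$ and $\chi \equiv 0$ for $|w| \leq \tfrac14$, and set $\omega' = \omega_0 + i\partial\bar\partial(\chi\phi)$, extending $\chi\phi$ by zero through $w = 0$. Then $\omega'$ is smooth, closed, and real $(1,1)_{\Psi_*J}$; it coincides with $\Psi_*\omega = \omega_0 + i\partial\bar\partial\phi$ on $\{|w| \geq \tfrac12\}$ and with $\omega_0$ on $\{|w| \leq \tfrac14\}$. For positivity of $\omega'$ on the transition region $\{\tfrac14 \leq |w| \leq \tfrac12\} \times D$, where the contribution $\phi \cdot i\partial\bar\partial\chi$ in $i\partial\bar\partial(\chi\phi)$ need not be semipositive, I first enlarge $\omega_0$ by $C \cdot i\partial\bar\partial\rho'$ for a large constant $C$ and a smooth plurisubharmonic function $\rho'$ chosen so that the enlarged $\omega_0$ dominates the bounded correction in the transition region; this modification does not alter the cohomology class.

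The main obstacle I expect is maintaining positivity through the gluing step, which requires balancing the second derivatives of the cut-off $\chi$---which grow like $|w|^{-2}$ in the disk coordinate---against the exponential decay of $\phi$. The cleanest treatment performs the cut-off in the cylindrical variable $t$ rather than $|w|$, so that its derivatives remain bounded in the cylindrical metric, and exploits the exponential decay of $J - J_\infty$ and $\omega - \omega_\infty$ together with the weighted elliptic estimates of Proposition \ref{thm:lockhart}.
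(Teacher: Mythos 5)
Your strategy---choose a smooth reference K\"ahler form $\omega_0$ on $\Delta\times D$, write $\Psi_*\omega-\omega_0=i\partial\bar\partial\phi$, and glue with a cut-off---is essentially the ``most intuitive approach'' that the paper describes at the start of its proof of this theorem and explains why it fails without substantial further input. Two of your key claims break down. First, $\phi$ cannot lie in $C^{k,\alpha}_{\delta'}$: in cylindrical coordinates a smooth form on $\Delta\times D$ has horizontal component $O(e^{-2t})\,dt\wedge d\theta$, so $\Psi_*\omega-\omega_0$ is asymptotic to $i\partial\bar\partial t^2+(\omega_D-\omega_0|_D)$ rather than to $0$, and $\phi$ must grow like $t^2$. (Also, $i\partial\bar\partial$ on functions is overdetermined rather than elliptic, so Proposition \ref{thm:lockhart} does not apply to it directly; one needs a genuine ACyl $i\partial\bar\partial$-lemma such as Proposition \ref{p:ddbar}.) Second, and more seriously, your positivity step is circular. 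If you enlarge $\omega_0$ to $\omega_0+Ci\partial\bar\partial\rho'$, the potential solving $\Psi_*\omega-\omega_0^{\rm new}=i\partial\bar\partial\phi^{\rm new}$ becomes $\phi-C\rho'$ up to pluriharmonics, so the uncontrolled cut-off terms $\phi^{\rm new}\,i\partial\bar\partial\chi+2\,{\rm Re}(\partial\chi\wedge\bar\partial\phi^{\rm new})$ also scale with $C$ and are not dominated by the enlargement. Your alternative of cutting off in the variable $t$ far down the end fares no better: there the reference form degenerates horizontally like $e^{-2t}$, so the $O(e^{-\delta t})$ errors (with $\delta$ possibly much smaller than $2$) swamp it---precisely problem (2) flagged in the paper---and moreover those error terms have no reason to extend smoothly across $\{0\}\times D$ (problem (1)).

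Closing this gap is the real content of the theorem, and the paper gives two routes. The first corrects $\omega$ by $i\partial\bar\partial u$, with $u$ obtained by solving $\bar\partial$-equations along the $J$-holomorphic cylinders built in the proof of Theorem \ref{thm:cptfy}, and then proves via an elliptic regularity argument that the corrected error $\kappa$ extends smoothly to $\Delta\times D$ \emph{and vanishes along the divisor}; only then can one cap off $t^2$ by a convex function asymptotic to $C_3e^{-2t}$ and retain positivity, because a smooth form vanishing along $D$ is dominated by a smooth positive form via Cauchy--Schwarz. The second route caps off by a cone of small angle $2\pi\epsilon$ with $\epsilon<\delta/2$, so that the good term $i\partial\bar\partial e^{-2\epsilon t}$ decays more slowly than the errors and positivity is automatic, at the price of an edge-singular form; one then shows this form has continuous strictly plurisubharmonic local potentials and regularises it by Varouchas' method. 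Without either a smoothing-and-vanishing statement for the error term or a singular-potential regularisation argument, the gluing in your proposal cannot be completed.
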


Let us first see how the full statement of Theorem \ref{t:cptfy} now follows.

\begin{proof}[Proof of Theorem \ref{t:cptfy}]
We are given an $m$-sheeted covering $\tM_\cyl$ of $M_\cyl$ such
that $\tM_\cyl = \R \times \Sph^1 \times D$ for some compact
Ricci-flat Kähler manifold $D$. We can assume that the circle factor
has length $2\pi$. Pulling back $J$ from $M$ to $M_\infty^+$ by the ACyl
diffeomorphism and further pulling back by the covering map
$\tM_\cyl^+ \to M_\cyl^+$, we obtain a complex structure $\tilde{J}$ on
$\tM_\infty^+$. Theorem \ref{thm:cptfy} applies and produces a
$\tilde{J}$-holomorphic compactification
$\tilde{\Psi}: \tM_\infty^+ \hookrightarrow \Delta \times D$.
The action of the deck group of the covering $\tM_\cyl \to M_\cyl$
extends and preserves the divisor $D$ at infinity, so that $M$ itself
can be compactified as an orbifold $\oM$ by adding a suborbifold
$\oD = D/\langle\iota\rangle$. Averaging the K\"ahler form on $\Delta \times D$
provided by Theorem \ref{t:kfy} under the given holomorphic $\Z_m$-action, passing to the
quotient, and joining it to the ACyl K\"ahler form on $M$, we obtain an
orbifold Kähler form on $\oM$. 

Following \cite[Prop 2.2]{kovalev-lee08}, we can now easily see that $\oM$ must even be projective. As in the smooth case, it suffices to prove that $\oM$ does not admit any holomorphic $(2,0)$-forms.
But any holomorphic $(p,0)$-form on $\oM$ restricts to an asymptotically
translation-invariant holomorphic $(p,0)$-form on $M$, and since 
$\Hol(M) = \sunitary{n}$ by Theorem \ref{T:acyl:holo}\ref{nohk},
a standard Bochner argument then shows that there are no such forms if $0 < p < n$ 
(up to a complex multiple, the only nonzero bounded holomorphic form on $M$ is the parallel holomorphic volume form, which has a first order pole along $\oD$).

The fact that the plurigenera $h^0(\oM, \ell K_{\oM})$ vanish for all $\ell > 0$ is even easier. Indeed, $-K_{\oM}$ is an effective line bundle, so that $-\ell K_{\oM}$ has a nonzero holomorphic section for all $\ell > 0$. Thus, if $\ell K_{\oM}$ had a nonzero holomorphic section as well, then pairing these two sections would yield a nonzero holomorphic function on $\oM$, proving that $-\ell K_{\oM}$ is trivial, which is clearly not the case. See Yau \cite[p.~247]{yau:ICM78} for a more abstract argument that works in much greater generality.

As for the fibration of $\oM$ by $|m\oD|$, observe that we have a short exact sequence 
\begin{align}\label{e:iffyshortexactsequence}
0 \to \calo_{\oM} \to \calo_{\oM}(m\oD) \to \calo_{m\oD}(m\oD) \to 0.
\end{align}
The cokernel sheaf $\calo_{m\oD}(m\oD)$ is the
sheaf of sections of the restriction of the line bundle $m\oD$ to the scheme $m\oD$, \ie an infinitesimal ``thickening'' of $\oD$.
This yields a long exact sequence 
\[ 0 \to H^0(\calo_{\oM}) \to H^0(\calo_{\oM}(m\oD)) \to H^0(\calo_{m\oD}(m\oD))
\to H^1(\calo_{\oM}) . \]
Notice that $H^{0,1}(\oM) = 0$. Thus, if we knew that $\calo_{m\oD}(m\oD)$ had a section, then we would find that 
$h^0(\calo_{\oM}(m\oD)) = 2$, so $|m\oD|$
is a pencil. Now the line bundle $\ell \oD$ is trivial on $\oD$ for all $\ell \in m\Z$, but this does not imply that it is trivial on $m\oD$ except if $m = 1$ (on the other hand, if $m = 1$, it is then also clear that $|\oD|$ has no base locus). However, we have a general ``lifting'' sequence
\begin{align}\label{e:lift}
0 \to \mathcal{O}_{k\oD}(\ell \oD) \to \mathcal{O}_{(k+1)\oD}((\ell + 1)\oD) \to \mathcal{O}_{\oD}((\ell + 1)\oD) \to 0
\end{align}
for every $k \in \N_0$ and $\ell \in \Z$. Setting $k = \ell = m-1$ and taking cohomology yields
\begin{align}\label{e:lift2}
H^0(\calo_{m\oD}(m\oD)) \to H^0(\calo_{\oD}(m\oD)) \to H^1(\calo_{(m-1)\oD}((m-1)\oD)).
\end{align}
Thus, if the $H^1$ term vanishes (\eg if $m = 1$), then our trivialising section extends from $\oD$ to $m\oD$. We can get a handle on this $H^1$ by taking cohomology in the \emph{upstairs counterpart} to (\ref{e:lift}):
$$
H^1(\mathcal{O}_{kD}(\ell D)) \to H^1(\mathcal{O}_{(k+1)D}((\ell + 1)D)) \to H^1(\mathcal{O}_{D}((\ell + 1)D)).
$$
Now suppose that $b^1(D) = 0$ (which in fact follows from $m = 1$ in our
setting). Since $\ell D$ is trivial on $D$ for all $\ell \in \Z$, the third
term vanishes, and so induction on $k \in \N_0$ yields
$H^1(\mathcal{O}_{kD}(\ell D)) = 0$ for all $k \in \N_0$ and $\ell \in \Z$. 
In particular, setting $k = \ell = m-1$ and taking $\Z_m$-invariants, we find
that the obstruction space in (\ref{e:lift2}) vanishes and the trivialising
section of $\calo_{\oD}(m\oD)$ does extend.
\end{proof}

\begin{remark}\label{r:fibering_obstructions}
In Example \ref{ex:end}, we have $m = 2$, so the formal obstruction space in
(\ref{e:lift2}) coincides with the $\Z_2$-invariants in $H^1(\calo_{D}(D))$. To
compute these, it is helpful to identify this $H^1$ with the space of constant
$(0,1)$-forms on $D$ taking values in the normal bundle. The two standard
generators are then $d\bar{x} \otimes \frac{\partial}{\partial w}$ and
$d\bar{y} \otimes \frac{\partial}{\partial w}$, with $w = re^{-i\theta}$, as in
Example \ref{ex:end}. But these are obviously $\Z_2$-invariant and so the
formal obstruction space to fibering $\oM$ by $|2\oD|$ is $2$-dimensional.
\end{remark}

It remains to prove Theorems \ref{thm:cptfy}--\ref{t:kfy}.
This will be done in the following two subsections.

\subsection{Holomorphic compactification}\label{s:hol_cptf} We begin with a discussion of the main difficulties and an outline of the argument. For $(t, \anglen) \in \bbrp \times \Sph^1$ let $w = e^{-t-i\anglen}$. Then the
diffeomorphism
\begin{equation}
\label{eq:expmap}
\Psi_\infty: M_\infty^+ \to \Delta^* \times D, \;\,
(t, \anglen, x) \mapsto (w,x),
\end{equation}
pushes $J_\infty$ forward to the product complex structure $J_0$ on $\Delta^* \times D$, which is clearly compactifiable. However, $(\Psi_\infty)_*J$ may not even be \emph{uniformly bounded} with respect to $g_0 = |dw|^2 + g_D$ as $w \to 0$.
Specifically, for any section $s$ of
$(T^*\Delta)^a \otimes (T^*D)^b \otimes (T\Delta)^c \otimes  (TD)^d$ over $\Delta^* \times D$ we have that 
\begin{equation}\label{e:compare_norms}
|\Psi_\infty^*s|_{g_\cyl} = O(e^{-\delta t}) \Longleftrightarrow |s|_{g_0} = O(|w|^{\delta+c-a}).
\end{equation}
Thus, in terms of the decomposition $T\Delta \oplus TD$, the off-diagonal $T^*\Delta \otimes TD$ components of $(\Psi_\infty)_*J$ can be expected to blow up like $|w|^{-1 + \delta}$ as $|w|\to 0$; all the remaining components of $(\Psi_\infty)_*J$ are at least $C^{0,\delta}$ H\"older continuous along $\{0\} \times D$, but not---a priori---smooth.

The key point in resolving this problem is that the integrability of $J$ is equivalent to a nonlinear first-order differential equation: the vanishing of the Nijenhuis torsion. This equation is not elliptic, but the lack of ellipticity can be traced back to diffeomorphism invariance. In other words, there is
hope that a suitable improvement of $\Psi_\infty$ \emph{will} map $J$ to a smooth complex structure on $\Delta \times D$.

The proof of Theorem \ref{thm:cptfy} now follows in three steps. Step 1 shows how to construct a gauge in which $J$ coincides with $J_\infty$ in directions tangent to $\R^+ \times \Sph^1 \times \{x\}$ ($x \in D$). This already fixes the discontinuity of $(\Psi_\infty)_*J$ at infinity. Based on this, Step 2 then uses an elliptic regularity argument along these cylinders to show that the pushforward of $J$ is actually smooth at infinity; this involves the $C^{1,\alpha}$
Newlander-Nirenberg theorem of \cite{nij-woolf}. Step 3 deals with the normal bundle.

\subsection*{Step 1: Gauge fixing}

The pushforward $(\Psi_\infty)_*J$ fails to be continuous at $\{0\} \times D$ if and only if the $J_\infty$-holomorphic cylinders $\R^+ \times \Sph^1 \times \{x\}$ are not $J$-holomorphic. This suggests replacing $\Psi_\infty$ by $\Psi_\infty \circ F^{-1}$, where $F \in {\rm Diff}(M_\infty^+)$ maps each $\R^+ \times \Sph^1 \times \{x\}$ onto a $J$-holomorphic curve exponentially asymptotic to it. For this, it suffices to find $(J_\infty, J)$-holomorphic maps $F_x: \R^+ \times \Sph^1 \times \{x\} \to M_\infty^+$ that are exponentially asymptotic to the identity and depend smoothly on $x \in D$. 

To solve this problem, it is helpful to invoke some of the usual formalism for
the construction of holomorphic curves. Given $x \in D$ and the tautological
map $f_{0,x}: \R^+ \times \Sph^1 \to
\R^+ \times \Sph^1 \times \{x\} \subset M_\infty^+$, let $\mathcal{E}_x$ denote
the space of all smooth embeddings $f: \R^+ \times \Sph^1 \to M_\infty^+$
exponentially asymptotic to~$f_{0,x}$, and let
$\mathcal{V}_x \to \mathcal{E}_x$ denote the natural vector bundle whose
fibre at $f \in \mathcal{E}_x$ is the vector space of all exponentially
decaying vector fields along $f$. With a very slight abuse of notation, we then
have a section $\bar{\partial} \in \Gamma(\mathcal{E}_x, \mathcal{V}_x)$ whose
value at $f$ is given by $\bar{\partial}f
\equiv \frac{\partial f}{\partial t} + J\frac{\partial f}{\partial \theta}$.
Restricting to the region $t \gg 1$, we can assume that
$\|\bar{\partial}f_{0,x}\| \ll 1$ uniformly in $x$, and our goal is to
construct a genuine zero $f_x \in \mathcal{E}_x$ of the section
$\bar{\partial}$ which, as an embedding of $\R^+ \times \Sph^1$ into
$M_\infty^+$, depends smoothly on $x$.

We begin by choosing a chart for $\mathcal{E}_x$ near $f_{0,x}$ (modelled on a definite neighbourhood of the origin in $T_{f_{0,x}}\mathcal{E}_x$), as well as a trivialisation for $\mathcal{V}_x$ over it. There are no canonical choices for either, but a natural and useful way is to apply the exponential map and parallel transport with respect to $g_\infty$. This now allows us to view
$\bar{\partial}$ $\in$ $\Gamma(\mathcal{E}_x, \mathcal{V}_x)$ as a nonlinear first-order differential operator $\bar{\partial}_x$ acting on some definite open neighbourhood of the origin in $C^{k,\alpha}_\delta(\R^+ \times \Sph^1, f_{0,x}^*TM_\infty^+)$. We have $\|\bar{\partial}_x(0)\| \ll 1$, and the linearisation $\mathcal{L}_x$ of $\bar{\partial}_x$ at $0$ satisfies $\mathcal{L}_x = \mathcal{L} + \mathcal{U}_x$, where
$$
\mathcal{L} V \equiv \frac{\partial V}{\partial t} + J_\infty\biggl(\frac{\partial V}{\partial\theta}\biggr), \;\,  \|\mathcal{U}_x\|_{\rm op} \ll 1.
$$
Also, $\mathcal{U}_x$ 
varies smoothly with $x$ if we use parallel transport with respect to the Chern connection 
of $(M_\infty^+, g_\infty)$ in order to identify $C^{k,\alpha}_\delta(\R^+ \times \Sph^1, f_{0,x}^*TM_\infty^+)$ with $C^{k,\alpha}_\delta(\R^+ \times \Sph^1, f_{0,y}^*TM_\infty^+)$ for different nearby points $x,y \in D$. Notice that these identifications 
do not affect the operator $\mathcal{L} \equiv \bar{\partial}_{J_\infty}$.

Since the $\bar{\partial}$-equation in one complex variable with values in a complex vector space is elliptic, we can apply Remark \ref{r:right_inverse} to construct some bounded right inverse $\mathcal{R}_x$ to $\mathcal{L}$ at any given point $x \in D$. Since $\mathcal{R}_x$ is not unique, some care is needed to ensure that $\mathcal{R}_x$ depends smoothly on $x$. For this we choose a finite cover of $D$ by open sets $U_1, \ldots, U_N$ with basepoints $x_i \in U_i$ such that $f_{0,x_i}(t,\theta)$ can be joined to $f_{0,x}(t,\theta)$ by a unique Chern geodesic with respect to $g_\infty$ for all $x \in U_i$ and for all $\theta \in \Sph^1$ and $t \gg 1$. Moreover, let  $\chi_1, \ldots, \chi_N$ be a partition of unity subordinate to this cover. Choosing $\mathcal{R}_{x_i}$ as above for all $i \in \{1,\ldots, N\}$, we can then put $\mathcal{R}_x \equiv \sum \chi_i(x)(\mathcal{P}_{x_ix} \circ \mathcal{R}_{x_i} \circ \mathcal{P}_{xx_i})$ for all $x \in D$, with $\mathcal{P}_{xy}: C^{k-1,\alpha}_\delta(\R^+ \times \Sph^1, f_{0,x}^*TM_\infty^+) \to C^{k-1,\alpha}_\delta(\R^+ \times \Sph^1, f_{0,y}^*TM_\infty^+)$ denoting Chern parallel transport.

The desired holomorphic maps $f_x$ are then obtained by an elementary fixed point argument---by
iterating the contraction mappings $\mathcal{R}_x \circ (\mathcal{L}-\bar{\partial}_x)$ on some neighbourhood of the origin.

\subsection*{Step 2: Elliptic regularity.}

If we define $\Psi \equiv \Psi_\infty \circ F^{-1}$ with $F \in {\rm Diff}(M_\infty^+)$ as in Step 1, then we know that $\Psi_*J$ is equal to the standard complex structure $J_0$ on the horizontal 
subbundle $T\Delta$ of $T(\Delta \times D)$. In particular, by (\ref{e:compare_norms}), $\Psi_*J$ extends $C^{0,\delta}$ across $\{0\} \times D$. We will now first explain
how the vanishing of the Nijenhuis torsion of $J$ implies that $\Psi_*J$ automatically extends $C^{1,\alpha}$.

Since $\tilde{J} \equiv F^*J$ satisfies $\tilde{J}\partial_t = \partial_\theta$, the vanishing of the torsion of $J$ (or $\tilde{J}$) implies that
\begin{align}\label{e:zerotorsion}
\pd{\tilde{J}}{t} + \tilde{J} \circ \pd{\tilde{J}}{\anglen} = 0.
\end{align}
Thus, the endomorphism field $K \equiv \tilde{J}-J_\infty$, which is exponentially decaying, satisfies the following quadratic perturbation of the $\mathcal{L}$- or $\bar{\partial}_{J_\infty}$-equation:
\begin{equation}\label{e:quadratic_K}
\mathcal{L}K + K \circ \frac{\partial K}{\partial\theta} = 0.
\end{equation}
Using the right inverses $\mathcal{R}_x$ to $\mathcal{L}$ constructed above, we can therefore write 
\begin{equation}\label{eq:iterate}
K = \tilde{K} - \mathcal{R}_x(K \circ \partial_\theta K), \;\,\tilde{K} \in {\rm ker}(\mathcal{L}).
\end{equation}
Observe that ${\rm ker}(\mathcal{L})$ consists of Laurent series in $w$ whose coefficients are constant sections of the vector bundle ${\rm End}_{\R}(f_{0,x}^*TM_\infty^+)$ over $\R^+ \times \Sph^1$. It is clear from (\ref{eq:iterate}) that $\tilde{K}$ depends smoothly on $x$. Thus, by the Cauchy integral formula, each of its Laurent coefficients depends smoothly  on $x$. 

Since $K$ already decays exponentially and $\mathcal{R}$ preserves the decay rate, (\ref{eq:iterate}) yields that
\begin{equation}
\label{jexpansioneq}
K = \tilde{J} - J_\infty = w \tilde{K}_1 + O(|w|^{1+\alpha})
\end{equation}
for all $\alpha \in (0,1)$, by iteration. Here $\tilde{K}_1 = \tilde{K}_{1}(x)$ denotes a constant section of ${\rm End}_{\R}(f_{0,x}^*TM_\infty^+)$ that depends smoothly on $x$, and the product with $w$ is again understood in the sense that $iA \equiv J_\infty \circ A$ for any endomorphism $A$. Moreover, denoting $L \equiv K - w \tilde{K}_1$, we have that $\partial_w^a\partial_x^b L = O(|w|^{1+\alpha-a})$ for all $a,b \in \N_0$. We now claim that (\ref{jexpansioneq}) implies that $\Psi_*J$ extends $C^{1,\alpha}$ to $\Delta \times D$ as desired. Indeed, since $(\Psi_\infty)_*K$ vanishes on the horizontal subbundle $T\Delta$, the same is true for the slicewise constant section $(\Psi_\infty)_*\tilde{K}_1$, which therefore extends $C^\infty$ to $\Delta \times D$. Thus, it remains to consider $(\Psi_\infty)_*L$; but, using (\ref{e:compare_norms}) and the above derivative properties of $L$, it is clear that $|\nabla_{g_0}(\Psi_\infty)_*L|_{g_0} = O(|w|^\alpha)$. 

The version of the Newlander-Nirenberg theorem of \cite[Thm II]{nij-woolf} now tells us that there exists a complex analytic atlas on $\Delta \times D$ whose coordinate functions are $\Psi_*J$-holomorphic and $C^{1,\frac{\alpha}{n}}$ with respect to $g_0$. (Thus, in our main application---Theorem \ref{t:cptfy}---we would now already know that $M$ is holomorphically compactifiable by adding a divisor.)
However, we are claiming more: $\Psi_*J$ in fact extends smoothly \emph{as a tensor field}, not just modulo $C^{1,\frac{\alpha}{n}}$ local diffeomorphisms.

To prove this, note that \cite[Thm II]{nij-woolf} in particular tells us that there exist sufficiently many local $\Psi_*J$-holomorphic functions so that $\Psi_*J$ can be recovered from their differentials as a tensor field.
It therefore suffices to check that $\Psi_*J$-holomorphic functions are smooth. Let $z$ be $\Psi_*J$-holomorphic on a neighbourhood of a point in $\{0\} \times D$. Since $\Psi_*J$ coincides with $J_0$ on $T\Delta$, we immediately find that $z$ is $J_0$-holomorphic on each horizontal slice. In other words, we have
\begin{equation}\label{e:powerseries}
z = z_0 + wz_1 + w^2z_2 + \cdots,
\end{equation}
and the Cauchy integral formula expresses the coefficients $z_i = z_i(x)$ in terms of $z(w,x)$ with $w \neq 0$. But we already know that $z$ is smooth for $w \neq 0$ because $\Psi_*J$ is. 

\begin{remark}
It is conceivable that a similar (if more difficult) argument could work for the tensor $K$ itself, by refining the partial expansion \eqref{jexpansioneq} to a complete one based on \eqref{e:quadratic_K}.
\end{remark}

\subsection*{Step 3: Normal bundle to the compactifying divisor.} We identify $J$ and $\Psi_*J$ for convenience.
It is clear that $\{0\} \times D$ is a $J$-complex submanifold of $\Delta \times D$,
biholomorphic to $D$. It remains only to prove that the normal bundle $N_D$ is
holomorphically trivial with respect to $J$.
Since every slice $\Delta \times \{x\}$ is a $J$-complex
submanifold by construction, the complex tangent vector field $\contra{w}$ is of type $(1,0)$ with respect to $J$. We show
that the section of $N_D$ that it induces is $J$-holomorphic; recall here that elements of $N_D$ are by definition cosets modulo the complex tangent space of $D$. 

For every $x \in D$ there is a $J$-holomorphic function $z$ on a neighbourhood
$U$ of $(0,x)$ in $\Delta \times D$ which vanishes to order $1$ along $D$.
Let $U' \equiv U \cap (\{0\} \times D)$. Then $dz$ is a trivialising holomorphic
section of $N^*_D$ over $U'$, so $\contra{w}$ will map to a
holomorphic section of $N_D$ if and only if $dz(\contra{w}) = \frac{\partial z}{\partial w}$ is
a holomorphic function on $U'$. Now if we expand $z$ as a power series in $w$ as in (\ref{e:powerseries}),
\begin{equation}\label{e:powerseries2}
z = wz_1 + w^2z_2 + \cdots,
\end{equation}
then $\frac{\partial z}{\partial w} = z_1$ on $U'$. On the other hand, applying the $\bar{\partial}$-operator of $J$ to (\ref{e:powerseries2}) yields
\begin{equation}\label{e:powerseries3}
0 = \db_Jz =  w\db_{J} z_1 + (z_1 + 2wz_2) \db_{J} w +O(|w|^2).
\end{equation}
In order to conclude from this that $\db_J z_1 = 0$ along $U'$, we need to know that $\db_J w = o(|w|)$ in terms of $g_0$. But $w$ is $J_0$-holomorphic, so $\db_J w = \frac{i}{2}dw \circ (J - J_0)$. Now the only components of $J - J_0$ not annihilated by $dw$ are the 
$T^*D \otimes T\Delta$ ones, whose $g_0$-length is $|w|$ times their $g_\infty$-length, and the $g_\infty$-length of $J-J_0$ certainly goes to zero; in fact, by \eqref{jexpansioneq}, it is even $O(|w|)$.
\hfill $\Box$

\subsection{Kähler compactification}
We have found two different proofs of Theorem \ref{t:kfy}, both of which will be explained in this section. We will assume the conclusion of Theorem \ref{thm:cptfy} but usually ignore the diffeomorphism $\Psi$. Both proofs begin by writing the ACyl K\"ahler form on $M_\infty^+$ as
\begin{equation}\label{e:acyl_kf}
\omega = i\partial\bar{\partial} t^2 + \omega_D + O(e^{-\delta t}).
\end{equation}
Here $i\partial\bar{\partial}$ is with respect to $J$, and $\omega_D$ is pulled back from the $D$ factor in $M_\infty^+ = \R^+ \times \Sph^1 \times D$; in 
particular, $\omega_D$ is closed, but not necessarily $(1,1)$ with respect to $J$.
The most intuitive approach to ``compactifying'' $\omega$ may be to replace $t^2$ by the K\"ahler potential of a half-cylinder with a spherical cap attached, but there are two (related) problems with this: (1) The $O(e^{-\delta t})$ terms have no reason to extend smoothly to the complex compactification. (2) The capped-off potential will be $O(e^{-2t})$, so the $O(e^{-\delta t})$ errors may dominate and the modified form may not be positive. 

Our first proof uses ideas from Section \ref{s:hol_cptf} to fix (1) and, by consequence, (2). Specifically, recall that the cylinders $\R^+ \times \Sph^1 \times \{x\}$ are $J$-holomorphic by the construction of $\Psi$. Solving $\bar{\partial}$-equations along these cylinders, we will be able to construct $u = O(e^{-\delta t})$ supported far out in $M_\cyl^+$ such that the exponential errors of the \emph{corrected} K\"ahler form $\omega + i\partial\bar{\partial}u$ do extend smoothly. It then follows immediately from this that we can cap off the $i\partial\bar{\partial}t^2$ part without losing positivity.

The second proof will emphasise positivity over smoothness. We back up one step and cap off the 
infinite end of the cylinder metric on $\R^+ \times \Sph^1$ by a \emph{cone} of angle $2\pi \epsilon$ ($\epsilon \ll \delta$) rather than a disk or hemisphere. This amounts to replacing 
$t^2$ in (\ref{e:acyl_kf}) by $e^{-2\epsilon t}$ rather than $e^{-2t}$ at infinity. 
Then (2) is not a problem to begin with, but (1) now looks worse.
However, geometrically, we have created
an \emph{edge singular} K\"ahler metric on the compactified space, and we will see that this ``edge metric'' has continuous local K\"ahler potentials. It can therefore be regularised using
the method of \cite{V}.  \\

\noindent \emph{First proof of Theorem \ref{t:kfy}.} By translating $t$, we can assume without loss that (\ref{e:acyl_kf}) holds on all of $M_\infty^+ = \R^+ \times \Sph^1 \times D$ and that the exponential errors are bounded by $\epsilon e^{-\delta t}$, where $\epsilon$ is as small as we like. The moral point of the proof is to correct $\omega$ by $i\partial\bar{\partial}u$, with $u$ exponentially decaying and small (obtained by solving $\bar{\partial}$-equations on each horizontal slice), in order to arrange that the exponential errors of $\omega + i\partial\bar{\partial}u$ have a power series expansion in $w$, or are at least smooth at infinity.

Let $\psi$ denote the $O(e^{-\delta t})$ error terms 
in (\ref{e:acyl_kf}). We begin by noting that $\psi = d(\eta + \bar{\eta})$
for some $(0,1)$-form $\eta = O(e^{-\delta t})$. Indeed, we can write $\psi = dt \wedge \psi_1 + \psi_2$, where $\psi_i = O(e^{-\delta t})$ is a $1$-parameter family of $i$-forms on $X$; the closedness of $\psi$ implies that $\xi(t,x) \equiv - \int_t^\infty \psi_1(s,x) \, ds$ is a primitive for $\psi$ and we let $\eta$ be the $(0,1)$-part of $\xi$.
Next, we solve $\bar{\partial}f_x = \eta|_{C_x}$ along $C_x = \bbrp \times \Sph^1 \times \{x\} \subset M_\cyl^+$ for each $x \in D$ in such a way that the $f_x$ depend smoothly on $x$ with $|f_x| \leq C\epsilon e^{-\delta t}$. In particular, we obtain a smooth complex-valued function $f$ on $M_\infty^+$, and we now put $u \equiv -2\, {\rm Im} f$.

It is immediate that 
\begin{equation}\label{e:omega_modified}
\omega + i\partial\bar{\partial}u = i\partial\bar{\partial}t^2 + \omega_D + d(\kappa  + \bar{\kappa}) > 0,\;\, \kappa \equiv \eta - \bar{\partial} f = O(e^{-\delta t}),
\end{equation}
and the restriction of $\kappa$ to each of the usual $J$-holomorphic cylinders $C_x$ vanishes by construction. Thus, for all $(t,\theta,x)$, we can view $\kappa|_{(t,\theta,x)}$ as an element of $V_x \equiv T^*_xD \otimes \C$, which we in turn view as a real vector space (with an obvious complex structure, but this will not be relevant). Now $V_x$ has a natural family of complex structures
 $\mathcal{J}_x(t,\theta)$ defined by the pullback action of $-J$, which leaves 
$T^*D \subset T^*M_\infty^+$ invariant because the action of $J$ on vectors preserves $T\Delta \subset TM_\infty^+$. Given any fixed~$x$, we then view $\kappa$ as a function on $\R^+ \times \Sph^1$ taking values in $V_x$, and we claim that 
\begin{equation}\label{e:dbar_kappa}
\frac{\partial\kappa}{\partial t} + \mathcal{J}_x \frac{\partial \kappa}{\partial \theta} = 0.
\end{equation}
To see this, first note that 
$\partial_t \kappa + \mathcal{J}_x\partial_\theta\kappa= (\partial_t + i\partial_\theta) \,\llcorner\,\bar{\partial}\kappa$, where $\bar{\partial}\kappa$ means the $\bar{\partial}$-derivative of $\kappa$ as a $(0,1)$-form on $M_\infty^+$; this is proved using that $\bar{\partial}\kappa = \frac{1}{2}(d\kappa - J^*d\kappa)$, that $\kappa$ is vertical, and that $T\Delta$ is $J$-invariant. On the other hand,  $\bar{\partial}\kappa$ is equal to the $(0,2)$-part of $-\omega_D$ by (\ref{e:omega_modified}), and
$$\omega_D^{0,2}(X,Y) = \frac{1}{4}(\omega_D(X,Y) - \omega_D(JX, JY) + i(\omega_D(JX, Y) + \omega_D(X, JY ))),$$
so if $X$ is horizontal then this vanishes for every $Y$ since $JX$ is horizontal as well. 

We now exploit the $\bar{\partial}$-type equation (\ref{e:dbar_kappa}), together with the smoothness at infinity of $\mathcal{J}_x$ from Section \ref{s:hol_cptf}, to deduce that $\kappa$ is itself smooth at infinity. For this we pass to the disk picture, writing $w = u + iv \in \Delta$ with $u = e^{-t}\cos\theta$ and $v = -e^{-t}\sin\theta$. Then (\ref{e:dbar_kappa}) yields
$\partial_u \kappa + \mathcal{J}_x \partial_v \kappa = 0$ on $\Delta^*$,
where the function $\kappa: \Delta \to V_x$ is $C^{0,\delta}$ H\"older continuous, smooth away from the origin, and zero at the origin itself, and the function $\mathcal{J}_x: \Delta \to {\rm End}_{\R}(V_x)$ is smooth with $\mathcal{J}_x^2 = -{\rm id}_{V_x}$. Smoothness of $\kappa$ at $w = 0$ now follows from elementary elliptic regularity; for example, by applying $\partial_u - \mathcal{J}_x\partial_v$ we 
can deduce that $\Delta \kappa + \mathcal{K}_x(\partial_v\kappa) = 0$, where $\mathcal{K}_x \equiv \partial_u \mathcal{J}_x - \mathcal{J}_x \partial_v\mathcal{J}_x$ is smooth, and using $\kappa = O(|w|^\delta)$ and $d\kappa =O(|w|^{\delta-1})$ one checks that $\kappa \in W^{1,2}$ solves this
 equation in the weak sense at $w = 0$. Smooth dependence of $\kappa = \kappa_x(u,v)$ on the parameter $x$ is then standard.

To conclude the proof, we will now verify that the closed $(1,1)$-form
\begin{equation}\label{e:capped_off_form}
\omega_D + d(\kappa + \bar{\kappa}) + i\partial\bar{\partial}((1-\chi)t^2 + \chi\phi)
\end{equation}
on $M_\infty^+$ is positive and extends to a smooth K\"ahler form on $\Delta \times D$, where $\chi(t)$ is a cut-off function with $\chi \equiv 0$ on $\{t < 1\}$ and $\chi \equiv 1$ on $\{t > 2\}$, and $\phi(t)$ is a convex function with
\begin{align*}
\phi(t) = \begin{cases} 
t^2 + C_1 t + C_2 &{\rm for}\;\,t \in (0, 3),\\
C_3 e^{-2t}
& {\rm for}\;\,t \in (5, \infty),
\end{cases}
\end{align*}
the absolute constants $C_1, C_2, C_3$ being chosen so that the two branches of the definition match up at $t = 4$ including first and second derivatives. This is understood in the sense that we have already shifted $t$ so that $|J - J_\infty| + |\kappa| \leq \epsilon e^{-\delta t}$ on the whole of $M_\infty^+$, with $\epsilon$ as small as necessary.

Since we already know that $J, \kappa$ extend smoothly, and since $e^{-2t} = |w|^2$ is smooth on $\Delta \times D$, it is clear that the form in (\ref{e:capped_off_form}) extends smoothly. Positivity for $t \in (0,3)$ is also clear, given that we can assume that $|i\partial\bar{\partial}t| \leq \epsilon$. For $t \in (3,\infty)$, we would be stuck if all we knew was that $\kappa = O(e^{-\delta t})$
for {some} $\delta > 0$ (even $\delta = 1$) because such terms can easily swamp $i\partial\bar{\partial}\phi$. But $d(\kappa + \bar{\kappa}) + \omega_D^{2,0} + \omega_D^{0,2}$
extends smoothly and vanishes along $D$, while $i\partial\bar{\partial}\phi + \omega_D^{1,1}$ is smooth and positive near $D$.
\hfill $\Box$

\begin{remark}
Unlike $\kappa$ of (\ref{e:omega_modified}), the $(0,1)$-form $\eta$ describing the exponential errors in  (\ref{e:acyl_kf}) has no reason to be smooth at infinity {even though} $(\partial_t + i\partial_\theta) \,\llcorner\, \bar{\partial}\eta = 0$. Of course we expect that $\kappa$ really is more regular than $\eta$, but there is a subtle point here: formally, (\ref{e:dbar_kappa}), which gives regularity for $\kappa$, is derived from $(\partial_t + i\partial_\theta) \,\llcorner\, \bar{\partial}\kappa = 0$, which also holds for $\eta$, using \emph{only} that $\kappa$ is vertical.
\end{remark}

\begin{remark}\label{r:alter_reg}
We also mention an alternative approach to regularity for $\kappa$. In the disk picture, pick a $\C$-basis $\{\kappa_i\}$ for $(V_x, \mathcal{J}_x(0))$, so that $\{\kappa_i\}$ still is a $\C$-basis for  $(V_x, \mathcal{J}_x(w))$ if $|w|$ is small. Each $\kappa_i$ 
trivially solves (\ref{e:dbar_kappa}), and using (\ref{e:zerotorsion}) one can compute that $\mathcal{J}_x\kappa_i$ solves (\ref{e:dbar_kappa}) too. We now expand $\kappa = \sum f_i\kappa_i$ with $f_i: \Delta^* \to \C$, again in the sense that $i \in \C$ acts on $V_x$ by $\mathcal{J}_x$.
Then $\kappa$ solves (\ref{e:dbar_kappa}) if and only if all the $f_i$ are holomorphic, so we can apply the removable singularities theorem.

We can interpret this argument as follows. By (\ref{e:zerotorsion}), the $(0,1)$-part of the trivial connection $\nabla$ on the complex vector
bundle $(V_x, \mathcal{J}_x)$ is a $(0,1)$-connection, \ie
$\nabla^{0,1}(f\kappa) = \bar{\partial}f \otimes \kappa + f \nabla^{0,1}\kappa$.
We could have worked in any local frame $\{\kappa_i\}$ with $\nabla^{0,1}\kappa_i = 0$. Such frames exist for every $(0,1)$-connection over the disk (\ie the 
$(0,1)$-connection is integrable, defining a holomorphic structure).
\end{remark}

\noindent \emph{Second proof of Theorem \ref{t:kfy}.}
We again assume that all $O(e^{-\delta t})$ error terms are uniformly as small as necessary on the whole cylinder $M_\infty^+$, and we write our ACyl K\"ahler form as 
$\omega = i\partial\bar{\partial}t^2 + \omega_D + \psi$ with $\psi = O(e^{-\delta t})$. We then construct the following closed $(1,1)$ modification $\tilde{\omega}$ of $\omega$:
\begin{equation}\label{e:pencil_form}
\tilde{\omega} = i\partial\bar{\partial}((1-\chi)t^2 + \chi\phi)  + \omega_D + \psi,
\end{equation}
where $\chi(t)$ is a cut-off with $\chi \equiv 0$ on $\{t < 1\}$ and $\chi \equiv 1$ on $\{t > 2\}$, and $\phi(t)$ is convex with
\begin{align*}
\phi(t) = \begin{cases} 
t^2 + C_1 t + C_2 &{\rm for}\;\,t \in (0, 3),\\
C_3 e^{-2\epsilon t}
& {\rm for}\;\,t \in (5, \infty).
\end{cases}
\end{align*}
Here $\epsilon > 0$ is fixed but \emph{strictly smaller} than $\frac{\delta}{2}$, and $C_1,C_2,C_3$ are determined by $\epsilon$ so that the two branches match up at $t = 4$ including first and second derivatives. This construction is similar to  (\ref{e:capped_off_form}), except that now the reason why (\ref{e:pencil_form}) defines a \emph{positive} form on $M_\infty^+$ is that the
good term $i\partial\bar{\partial}\phi + \omega_{D}^{1,1} > 0$ swallows the error $\psi + \omega_D^{2,0} + \omega_D^{0,2}$ by Cauchy-Schwarz because $\epsilon$ is small.

Now $\tilde{\omega}$ does not extend smoothly, but the Riemannian metric associated with $\tilde{\omega}$ only has a fairly mild (conical with cone angle $2\pi\epsilon$) singularity along the compactifying divisor $\{0\} \times D$. We pursue this idea by
proving that $\tilde{\omega}$ has local potentials that remain continuous at the divisor. 

For this, 
we first cover a neighbourhood of $\{0\} \times D$ by holomorphic charts isomorphic to $\Delta \times \mathbb{B}$, where $\mathbb{B}$ denotes the unit ball in $\C^{n-1}$, such that $(\{0\} \times D) \cap (\Delta \times \mathbb{B}) = \{0\} \times \mathbb{B}$. It 
is then clear that Proposition \ref{p:ddbar} applies to $\eta \equiv \tilde{\omega} - p^*\omega_D$, where $p$ denotes projection onto the $\mathbb{B}$ factor. This produces a smooth potential $\phi$ for $\tilde{\omega}$ on $\Delta^* \times \mathbb{B}$ such that $\phi$ extends as a $C^{0,2\epsilon}$ H\"older function to the full domain
$\Delta \times \mathbb{B}$ and satisfies $d\phi = O(|z_1|^{2\epsilon-1})$. 

We now apply the (elementary but clever) Varouchas method \cite{V} for smoothing singular K\"ahler forms with continuous local potentials; the presentation in Perutz \cite{P} is particularly convenient. In order to do so, we first need to check that $\phi$ is {strictly} plurisubharmonic in the sense of currents on the whole of $\Delta \times \mathbb{B}$. By definition, we must prove that $\phi' \equiv \phi - \lambda|z|^2$ is \emph{weakly} plurisubharmonic in
the sense of currents for some $\lambda > 0$. Now if $\lambda$ is small enough, then surely $\tilde{\omega}' \equiv \tilde{\omega} - i\partial\bar{\partial}(\lambda|z|^2) \geq 0$ on 
$\Delta^* \times \mathbb{B}$. We then pick any test form $\zeta \in C^\infty_0(\wedge^{n-1,n-1}(\Delta \times \mathbb{B}))$ with $\zeta \geq 0$ and compute
$$\int_{|z_1| > \delta} \phi' dd^c\zeta =\int_{|z_1| > \delta} \tilde{\omega}' \wedge \zeta + \int_{|z_1| = \delta} (\phi'  d^c\zeta - d^c\phi' \wedge \zeta);$$
the first term is nonnegative, and the second
term goes to zero as $\delta \to 0$ because $d\phi' = O(|z_1|^{2\epsilon-1})$.
We are now in a position to apply \cite[Lemma 7.5]{P} to the K\"ahler cocycle $(U_i, \phi_i)_{i \in I}$ thus obtained, where $X = \Delta \times D$, $X_1 = \Delta^* \times D$, and $X_2$ is the union of all our $\Delta \times \mathbb{B}$ coordinate charts. \hfill $\Box$\medskip\

It remains to prove the $i\partial\bar\partial$-lemma with estimates that was crucially used in the above. The result is perhaps most conveniently 
stated by identifying $\Delta^* \times \mathbb{B}$ with the cylinder $\R^+ \times \Sph^1 \times \mathbb{B}$ and using weighted H\"older spaces $C^{k,\alpha}_\epsilon$ on this cylinder. We will write $z_1, \dots, z_n$ for the standard holomorphic coordinates on $\Delta \times \mathbb{B}$, and we will use indices with respect to those.

\begin{prop}\label{p:ddbar}
Fix $\epsilon > 0$ small enough. Let $\eta \in C^\infty_\epsilon$ be a closed real $(1,1)$-form on $\Delta^* \times \mathbb{B}$. Then $\eta = i\partial\bar{\partial}\xi$ for some real-valued function $\xi \in C^{\infty}_\epsilon$. In particular, $\xi = O(|z_1|^\epsilon)$ extends as a $C^{0,\epsilon}$ H\"older function to the full domain $\Delta \times \mathbb{B}$ and $d\xi = O(|z_1|^{\epsilon-1})$.
\end{prop}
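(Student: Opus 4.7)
The strategy is the standard reduction of the $i\partial\bar\partial$-equation to a $\bar\partial$-equation, carried out with quantitative control in the weighted spaces $C^{k,\alpha}_\epsilon$. In cylindrical coordinates $(t,\theta,x)$ on $\Delta^* \times \mathbb{B} \cong \R^+ \times \Sph^1 \times \mathbb{B}$, write $\eta = dt \wedge \eta_1 + \eta_2$ with $\eta_1, \eta_2 \in C^\infty_\epsilon$. Closedness forces $\partial_t \eta_2 = d_X\eta_1$ where $X = \Sph^1 \times \mathbb{B}$, and hence the integration-from-infinity primitive $\gamma := -\int_t^\infty \eta_1(s,\cdot)\,ds$ is a real $1$-form in $C^\infty_\epsilon$ with $d\gamma = \eta$. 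This is the same device used to construct primitives for $\psi$ in the first proof of Theorem~\ref{t:kfy} above.

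Decompose $\gamma = \gamma^{1,0} + \gamma^{0,1}$ with $\gamma^{0,1} = \overline{\gamma^{1,0}}$. Pure type $(1,1)$ of $\eta$ forces $\bar\partial\gamma^{0,1} = 0 = \partial\gamma^{1,0}$, while the $(1,1)$-part reads $\eta = \bar\partial\gamma^{1,0} + \partial\gamma^{0,1}$. If I can solve $\bar\partial u = \gamma^{0,1}$ for some complex-valued $u \in C^\infty_\epsilon$, then setting $\xi := 2\,\mathrm{Im}(u)$ yields a real-valued function in $C^\infty_\epsilon$, and a short type computation using $\partial\bar u = \overline{\bar\partial u} = \gamma^{1,0}$ and $\partial\bar\partial\bar u = -\overline{\partial\bar\partial u}$ gives
\[
i\partial\bar\partial\xi = \partial\gamma^{0,1} + \bar\partial\gamma^{1,0} = \eta.
\]

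The technical heart of the proof is thus the weighted solvability of $\bar\partial u = \gamma^{0,1}$. My plan is to Fourier expand in the $\Sph^1$-variable, $\gamma^{0,1} = \sum_{k\in\Z} e^{ik\theta}\gamma^{0,1}_k$, and invert mode by mode. In cylinder coordinates the mode-$k$ equation on $\R^+ \times \mathbb{B}$ decouples into an ODE $(\partial_t - k)u_k = 2 f_{0,k}$ for the coefficient of $(dt - id\theta)$ and a family of $\bar\partial_{\mathbb{B}}$-equations for the $d\bar z_j$-coefficients ($j \geq 2$). For $k \neq 0$ the ODE has an explicit right inverse given by integration against the kernel $e^{k(t-s)}\mathbf{1}_{\{s > t\}}$ (if $k > 0$) or $e^{-|k|(t-s)}\mathbf{1}_{\{s > t\}}$ (if $k < 0$), which preserves the $C^\infty_\epsilon$-weight with operator norm $O(1/(|k|+\epsilon))$ as long as $\epsilon < 1$---this is precisely the smallness hypothesis. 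The $k=0$ mode is handled by combining the Dolbeault lemma on the Stein domain $\mathbb{B}$ (after shrinking slightly and using standard interior estimates) with a final integration from infinity in $t$; the required compatibility between the two steps follows from projecting $\bar\partial\gamma^{0,1} = 0$ onto the zero mode. Summability of the Fourier series in $C^\infty_\epsilon$ is automatic from the rapid decay of Fourier coefficients of smooth $\Sph^1$-functions.

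The bounds $\xi = O(|z_1|^\epsilon)$ (with $C^{0,\epsilon}$ H\"older extension across $\{0\}\times \mathbb{B}$) and $d\xi = O(|z_1|^{\epsilon-1})$ then follow immediately from $\xi, d\xi \in C^\infty_\epsilon$ via the dictionary (\ref{e:compare_norms}): functions see weight $\epsilon$, whereas the $dz_1$ and $d\bar z_1$ components of $d\xi$ lose one power of $|z_1|$ on passing from $g_\cyl$ to $g_0$. The main obstacle I expect is a uniform-in-$k$ bookkeeping issue for the Fourier-mode right inverses, together with the non-compactness of the cross-section $\Sph^1 \times \mathbb{B}$, which I expect to handle by slight shrinking of $\mathbb{B}$ and interior elliptic regularity so that only the behaviour near $\{0\} \times \mathbb{B}$ really matters.
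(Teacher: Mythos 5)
Your proposal is essentially correct and its technical core coincides with the paper's, but the organisation is different. The paper constructs the potential $\xi$ directly by a triangular scheme on the components of $\eta$: first $\xi^{(1)} = \mathcal{R}^h_{\partial\bar\partial}(\eta_{1\bar1})$ slicewise on the cylinder factor, then a horizontal $\mathcal{R}^h_\partial$ followed by a vertical $\mathcal{R}^v_{\bar\partial}$ (Siu) to kill the mixed components $\eta_{1\bar k}$, then a vertical $\mathcal{R}^v_{\partial\bar\partial}$ for the $\eta_{j\bar k}$; closedness of $\eta$ and the commutation of these bounded right inverses with parameter derivatives ensure each step does not disturb the previous ones. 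You instead first produce a real $1$-form primitive $\gamma$ by integrating from infinity (exactly the device used for $\psi$ in the first proof of Theorem \ref{t:kfy}), reduce to the single equation $\bar\partial u = \gamma^{0,1}$, and set $\xi = 2\,\mathrm{Im}\,u$. This buys a cleaner reduction to one $\bar\partial$-problem, but solving that problem on $\Delta^* \times \mathbb{B}$ in weighted spaces forces you back into essentially the same sequential horizontal-then-vertical scheme (solve the $d\bar z_1$-component first, check via $\bar\partial\gamma^{0,1}=0$ that the residual $d\bar z_j$-components are holomorphic in $z_1$, then apply a bounded Dolbeault operator on $\mathbb{B}$ commuting with the parameter), so the two proofs require the same ingredients.

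Two points need repair. First, your explicit kernel for the modes $k<0$ is wrong as written: $e^{-|k|(t-s)}\mathbf{1}_{\{s>t\}} = e^{|k|(s-t)}\mathbf{1}_{\{s>t\}}$ grows in $s$ and the convolution against an $O(e^{-\epsilon s})$ datum diverges. For $k<0$ the homogeneous solution $e^{kt}$ decays faster than $e^{-\epsilon t}$ (this is where $\epsilon<1$ enters), so one must integrate from a finite basepoint, $u_k(t)=\int_0^t e^{k(t-s)}2f_k(s)\,ds$, which gives the uniform bound $\lesssim (|k|-\epsilon)^{-1}$. Second, the mode-$k$ problem does not ``decouple'': for each $k$ the single unknown $u_k$ must satisfy the ODE in $t$ \emph{and} the $\bar\partial_{\mathbb{B}}$-equations simultaneously, so the system remains overdetermined mode by mode and the compatibility coming from $\bar\partial\gamma^{0,1}=0$ is needed at every mode (and at every stage of the sequential solution), not only at $k=0$. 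Both issues are standard to fix and do not affect the viability of the approach.
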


\begin{proof}
The proof consists of a reduction to known analytic results on the two factors. We make no pretense of optimality in the analysis. Let us begin by stating the results that we need.
\begin{enumerate}
\item The operators $\partial, \partial\bar{\partial}$ acting on weighted H\"older spaces $C^{k,\alpha}_\epsilon$ on $\Delta^* = \R^+ \times \Sph^1$ admit bounded right inverses $\mathcal{R}_{\partial}^{h}, \mathcal{R}_{\partial\bar{\partial}}^{h}$ (here the $h$ means ``horizontal'') that are compatible with the obvious inclusions of H\"older spaces. See Remark \ref{r:right_inverse} for this.
\item The operators $\bar{\partial}, \partial\bar{\partial}$ acting on smooth functions on $\mathbb{B}$ have right inverses $\mathcal{R}_{\bar{\partial}}^{v}, \mathcal{R}_{\partial\bar{\partial}}^{v}$ defined on the spaces of smooth 
$\bar{\partial}$-closed $(0,1)$-forms and smooth $d$-closed $(1,1)$-forms, respectively,
that extend to bounded operators $C^k \to C^k$.
For $\bar{\partial}$ this is proved in \cite{siu}. For $\partial\bar{\partial}$ let $\mathcal{P}$ denote the usual Poincar{\'e} operator on a star-shaped domain \cite[\S 11.5]{janich}, so that $d\mathcal{P}\eta = \eta$ for all closed forms $\eta$. Then $\mathcal{R}_{\partial\bar{\partial}}^{v}\eta \equiv 2i{\rm Im}\,\mathcal{R}_{\bar{\partial}}^{v}((\mathcal{P}\eta)^{0,1})$ works because $\mathcal{P}$ is clearly bounded $C^k \to C^k$. 

\item Since these right inverses $\mathcal{R}$ are all linear and bounded with respect to $C^{k}$ type norms, they commute with partial differentiation of $C^\infty$ forms with respect to $C^\infty$ parameters.
\end{enumerate}

We now define $\xi \equiv {\rm Re}(\xi^{(1)} + \xi^{(2)} + \xi^{(3)})$, where the $\xi^{(i)}$ are constructed as follows. First, 
$$\xi^{(1)} \equiv \mathcal{R}^h_{\partial\bar{\partial}}(\eta_{1\bar{1}})$$ on each horizontal slice. Next, we construct a vertical $(0,1)$-form $\zeta$ by setting
$$\zeta_{\bar{k}} \equiv \mathcal{R}_{\partial}^h(\eta_{1\bar{k}} - \xi^{(1)}_{,1\bar{k}}) \;\,(k > 1).$$
Then (iii) above and the closedness of $\eta$ imply that $\zeta$ is $\bar{\partial}$-closed on each vertical fibre; hence we can set $\xi^{(2)} \equiv \mathcal{R}^v_{\bar{\partial}}(\zeta)$ fibrewise. Again using (iii) and the closedness of $\eta$, one checks that
$$
\xi^{(2)}_{,1\bar{1}} = 0, \;\, \xi^{(2)}_{,1\bar{k}} = \eta_{1\bar{k}} - \xi^{(1)}_{,1\bar{k}} \;\,(k > 1).
$$
With $\xi^{(3)} \equiv \mathcal{R}^v_{\partial\bar{\partial}}(\eta_{j\bar{k}} - \xi^{(1)}_{,j\bar{k}} - \xi^{(2)}_{,j\bar{k}})$, where again $j,k > 1$, a similar computation shows that $\xi^{(3)}_{,1} = 0$.
The proposition now follows easily from the stated identities.
\end{proof}

\section{Existence and uniqueness}
\label{s:existence}

\subsection{Discussion and overview} The main purpose of this section is to prove Theorem \ref{t:geom-exi}, which generalises and refines the Tian-Yau existence result for complete Ricci-flat K{\"a}hler metrics of linear volume growth \cite[Cor 5.1]{tianyau90}. At the end we quickly explain the proof of Theorem \ref{t:uniqueness}.

We will deduce Theorem \ref{t:geom-exi} from the following analytic existence theorem.

\begin{theorem}[ACyl version of the Calabi conjecture]\label{t:ana-exi}
Let $(M, g, J)$ be an {\rm ACyl} K{\"a}hler manifold of complex dimension $n$ with K{\"a}hler form $\omega$. If $0 < \epsilon \ll 1$ and if $f \in C^{\infty}_\epsilon(M)$ satisfies 
\begin{equation}
\label{eq:f:vanish}
\int_M (e^f-1)\omega^n = 0,
\end{equation}
then
there exists a unique $u \in C^{\infty}_\epsilon(M)$ such that $\omega + i\partial\bar{\partial}u > 0$ and  $(\omega + i\partial\bar{\partial}u)^n = e^f\omega^n$.
\end{theorem}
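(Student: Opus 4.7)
The approach is the continuity method in the weighted Hölder spaces $C^{k,\alpha}_\epsilon$. Choose the path $\phi_t \equiv \log(1 + t(e^f - 1))$ for $t \in [0,1]$, so that $\phi_t \in C^\infty_\epsilon$, $\phi_0 = 0$, $\phi_1 = f$, and $\int(e^{\phi_t} - 1)\omega^n = t \int(e^f - 1)\omega^n = 0$. Let $S \subseteq [0,1]$ be the set of $t$ for which there exists $u_t \in C^\infty_\epsilon$ with $\omega + i\partial\bar\partial u_t > 0$ and $(\omega + i\partial\bar\partial u_t)^n = e^{\phi_t}\omega^n$; then $0 \in S$ via $u_0 = 0$. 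For openness at $t_0 \in S$, apply the implicit function theorem to $F(s,v) = \log(\omega_{u_{t_0}+v}^n/\omega^n) - \phi_{t_0+s}$, acting on a small neighbourhood of $(0,0)$ in $\mathbb{R} \times C^{k+2,\alpha}_\epsilon$. The linearisation in $v$ at $(0,0)$ is $\tfrac{1}{2}\Delta_{g_{t_0}}$, where $g_{t_0}$ is the ACyl metric associated with $\omega_{t_0}$. Since $g_{t_0} - g \in C^\infty_\epsilon$, $\Delta_{g_{t_0}}$ is asymptotically translation-invariant with the same cylindrical model as $\Delta_g$, so Proposition \ref{p:invert_laplacian} (applied with any $\epsilon$ less than the square root of the first eigenvalue of the scalar Laplacian on $X$) identifies it as an isomorphism from $C^{k+2,\alpha}_\epsilon$ onto the subspace of $C^{k,\alpha}_\epsilon$ of mean-zero functions with respect to $\omega_{t_0}^n$; the compatibility condition on $F$ follows from $\int(e^{\phi_{t_0+s}} - e^{\phi_{t_0}})\omega^n = 0$.

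For closedness, one needs uniform a priori estimates for $u_t$ in $C^\infty_\epsilon$. Yau's classical machinery yields uniform local bounds: the $C^0$ estimate via Moser iteration (adapted to the ACyl setting using that solutions in $C^\infty_\epsilon$ attain their sup and inf in a bounded region); the $C^2$ estimate via the maximum principle applied to $\log \mathrm{tr}_\omega\omega_{u_t} - Au_t$ for a suitable constant $A$; Evans--Krylov for $C^{2,\alpha}$; and standard bootstrap for higher derivatives. From these bounds and Arzelà--Ascoli, one can extract limits of $u_t$ as $t$ varies along $S$ that solve the equation at the limit parameter.

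The main obstacle is then promoting the local bounds to uniform exponential decay at the prescribed rate $\epsilon$. Rewrite the equation schematically as
\begin{equation*}
\Delta_g u_t = (e^{\phi_t} - 1) - Q(\partial\bar\partial u_t),
\end{equation*}
where $Q$ is a universal expression pointwise quadratic in $\partial\bar\partial u_t$. The first task is the qualitative statement $u_t(x) \to 0$ as $t(x) \to \infty$, which follows from the $C^0$ estimate combined with a Perron-type argument using barriers built from the ACyl exhaustion function. Interior Schauder estimates applied to the equation then yield $|\partial\bar\partial u_t| \to 0$ at infinity as well, so $Q(\partial\bar\partial u_t)$ is ``small times something decaying" and hence exponentially decaying (at a possibly slow rate). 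Thus $\Delta_g u_t \in C^{k,\alpha}_{\epsilon'}$ for some small $\epsilon' > 0$; Proposition \ref{p:invert_laplacian} produces a unique $w \in C^{k+2,\alpha}_{\epsilon'}$ with $\Delta_g w = \Delta_g u_t$, and the difference $u_t - w$ is a bounded harmonic function tending to zero, hence identically zero by the maximum principle. Iterating this argument, with the weight $\epsilon'$ improved at each step, eventually reaches the full target rate $\epsilon$; uniformity of all estimates along the continuity path is preserved throughout.

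For uniqueness, if $u_1, u_2 \in C^\infty_\epsilon$ both solve the equation, set $v = u_1 - u_2$ and observe
\begin{equation*}
0 = \omega_{u_1}^n - \omega_{u_2}^n = i\partial\bar\partial v \wedge T, \qquad T \equiv \sum_{k=0}^{n-1} \omega_{u_1}^k \wedge \omega_{u_2}^{n-1-k}.
\end{equation*}
Since $v$ decays exponentially, integration by parts is justified and yields $\int_M dv \wedge d^c v \wedge T = 0$. Positivity of $T$ forces $dv = 0$, so $v$ is constant; the decay condition then gives $v = 0$. Thus the main analytical difficulty lies entirely in paragraph three, where the sharp Fredholm theory of Section \ref{s:acyl_analysis} is combined with the nonlinear structure of the Monge--Ampère equation to upgrade qualitative decay of $u_t$ to quantitative exponential decay.
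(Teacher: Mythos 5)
Your overall architecture (continuity method in $C^{k,\alpha}_\epsilon$ along the path $\log(1+t(e^f-1))$, openness via Proposition \ref{p:invert_laplacian}, closedness via a priori estimates, uniqueness by integration by parts against the positive $(n-1,n-1)$-form $T$) matches the paper's, and your openness and uniqueness arguments are sound. However, there are two genuine gaps in the closedness step, and they are precisely the two points where the whole difficulty of the ACyl case is concentrated.

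First, the $C^0$ estimate. You invoke ``Moser iteration adapted to the ACyl setting using that solutions attain their sup and inf in a bounded region,'' but attainment of the extrema gives nothing for the Monge--Amp\`ere $C^0$ bound (at an interior maximum of $u$ the equation only yields $e^f\leq 1$ there, which is information about $f$, not about $\|u\|_\infty$), and the global Moser iteration itself does not run: it requires a Sobolev inequality of the form $\|v\|_{2n/(n-2)}\lesssim\|\nabla v\|_2$, which \emph{fails} on any complete manifold of less than maximal volume growth --- in particular on an ACyl manifold, which has linear volume growth. This is exactly the error in \cite{kovalev03} that the paper is at pains to correct. The fix is the weighted Sobolev--Poincar\'e inequality of Proposition \ref{p:wsob}, $\|e^{-\mu t}(v-\bar v_\mu)\|_{2\sigma}\leq C\|\nabla v\|_2$, which forces one to iterate with exponential weights and with a weighted average subtracted at every stage; the resulting iteration \eqref{e:ind} is delicate (one needs $2\mu\sigma<\epsilon$ and $\sigma<2$) and cannot be waved through as ``classical machinery.''

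Second, the entry into the weighted spaces. From qualitative decay $u_t\to 0$ and $\partial\bar\partial u_t\to 0$ at infinity you conclude that $Q(\partial\bar\partial u_t)$ is ``small times something decaying and hence exponentially decaying (at a possibly slow rate).'' This is a non sequitur: a quantity that tends to zero with no rate, squared, still tends to zero with no rate, and no exponential weight $\epsilon'>0$ is produced. (The qualitative decay is in any case free, since membership of $u_t$ in $C^\infty_\epsilon$ is built into the definition of the continuity set; what is missing is a \emph{quantitative, uniform} rate.) The paper obtains the initial rate $\epsilon'>0$ by a genuinely quantitative energy-decay argument: the tail Dirichlet energy $Q_T=\int_{t>T}|\nabla u|^2\omega^n$ satisfies the differential inequality $Q_T\leq Ce^{-\epsilon T}-C\,dQ_T/dT$ (obtained by integrating by parts on $\{t>T\}$ against $u$ minus its slice average and using Cauchy--Schwarz and Poincar\'e on the cross-section), whence $Q_T\leq Ce^{-\epsilon' T}$; only then do slicewise Moser iteration and Schauder convert this into pointwise exponential decay. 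Once some uniform $\epsilon'>0$ is in hand, your final iteration upgrading $\epsilon'$ to $\epsilon$ via $\frac12\Delta_g u=(e^f-1)-\mathcal Q(u)$ and Proposition \ref{p:invert_laplacian} is correct and is exactly the paper's Step 4; but without the energy-decay step your bootstrap has no starting point.
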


\begin{remark}
Integration by parts shows that \eqref{eq:f:vanish} is indeed necessary in order for $u$ to exist. This is a nonlinear version of the mean-value-zero assumption of Proposition \ref{p:invert_laplacian}.
As in the linear case,  if $f \in C^{\infty}_\epsilon(M)$ but \eqref{eq:f:vanish} is \emph{not} satisfied, then there may still exist solutions that grow at infinity since the Green's function on $M$ is asymptotically pluriharmonic (in fact, asymptotically linear).
\end{remark}

Theorem \ref{t:ana-exi} could be proved (although this proof is not written down anywhere) 
by combining the proof of \cite[Thm 1.1]{tianyau90} with a new idea concerning asymptotics 
of solutions to complex Monge-Amp{\`e}re equations from \cite{hein}. However, the ingredients from \cite{tianyau90} that would be required for such an approach are in fact very general and technically quite formidable. Here we will instead give an easy direct proof specifically tailored to the ACyl case. We achieve this by using weighted function spaces and by retooling the decay argument from \cite[Prop 2.9(i)]{hein} as an a priori estimate.

Joyce already employed weighted spaces to treat certain examples of \emph{maximal} volume growth---ALE and QALE K{\"a}hler manifolds, see \cite[\S 8.5,\,\S 9.6]{joyce00}---but his weighted nonlinear estimates break
down in our \emph{minimal} 
volume growth situation. This issue is related to an error in the construction of ACyl Calabi-Yau manifolds with exponential asymptotics in \cite{kovalev03}, where the analysis is based \mbox{\cite[p.~132]{kovalev03}}  on an estimate for the maximal volume growth case \cite[p.~52]{tianyau91}. This is incorrect because  the estimate from \cite{tianyau91} crucially relies on a Euclidean type Sobolev inequality that definitely \emph{fails} for any volume growth rate less than the maximal one. See Proposition \ref{p:wsob} below for comparison.

We will prove Theorem \ref{t:ana-exi} in Section \ref{s:proof_ana_exi}, after having deduced 
Theorem \ref{t:geom-exi} from it in Section \ref{s:reduction}. The proof of Theorem \ref{t:uniqueness} is essentially independent of this and will be given in Section \ref{s:uniqueness}. It may be worth advertising that 
our proof of Theorem \ref{t:ana-exi} will be \emph{self-contained} with only two exceptions:
(1)~We use Proposition \ref{p:invert_laplacian} without proof, but no other facts from linear analysis on ACyl manifolds.
(2)~We assume that the reader is familiar with Yau's proof \cite{yau78} of the Calabi conjecture on 
compact K\"ahler manifolds; see B{\l}ocki \cite{blocki} for a detailed and readable exposition.

\subsection{The analytic existence theorem implies the geometric one}\label{s:reduction}
In order to prove Theorem~\ref{t:geom-exi} we need to construct an ACyl Kähler
metric $\tomega$ on $M = \oM \setminus \oD$ such that Theorem \ref{t:ana-exi}
applies to the pair $(M,\tomega)$ and the smooth function $f$ defined by 
\begin{equation}\label{e:define_f}
e^f \tomega^n = i^{n^2}\Omega \wedge \bar{\Omega}.
\end{equation}
Applying Theorem \ref{t:ana-exi}, the 
desired Calabi-Yau metric $\omega$ is then given by $\omega = \tomega + i\partial\bar{\partial}u$.

We will explain the construction of $\tomega$ in two stages. In Part 1, we assume that $\oM$ is smooth and fibred by the linear system $|\oD|$. This is the setting originally considered by Tian-Yau in \cite{tianyau90} though our presentation will be closer in spirit to \cite[\S3.4]{hein}. We discuss this special case separately because it allows for a particularly transparent construction.  In Part 2, we then explain the modifications needed to treat the general case. The orbifold singularities of $\oM$ pose no particular difficulty but the absence of a fibration introduces many unpleasant error terms.

\subsection*{Remark about notation and constants} $A \lesssim B$ means $A \leq CB$ for some large generic constant $C$ (so that $A \sim B$ if and only if $A \lesssim B$ and $B \lesssim A$), and $A \ll B$ means $CA \leq B$. We will eventually encounter parameters $r,s, \dots$ to be fixed only at the very end such that---for instance---$s \ll r \ll 1$; 
it is important to make sure that no generic constant $C$ depends on these parameters.

\subsection*{Part 1: Construction of $\tomega$ if $\oM$ is smooth and fibred by $|\oD|$} Fix any K{\"a}hler form $\omega_0$
in the chosen Kähler class $\mathfrak{k}$ on $\oM$. The first step is to find a
K{\"a}hler form $\tomega_0$ on $\oM$ that is cohomologous to $\omega_0$ when
restricted to~$M$ and Ricci-flat when restricted to $\oD$.

For this, we first of all observe that $K_{\oD}$ is trivial by adjunction. Thus, by the Calabi-Yau theorem,
there exists $u_0 \in C^\infty(\oD)$ such that
$\omega_0|_{\oD} + i\partial\bar{\partial}u_0$ is Ricci-flat. 
Fix a $C^\infty$ trivialisation of the given fibration $|\oD|$ near $\oD$, thus identifying a tubular neighbourhood of $\oD$ with $\Delta \times \oD$, where $\Delta$ denotes the unit disk $\{|w| < 1\}$. Extend $u_0$ to be constant along the $\Delta$ factor and multiply this extension by a cut-off function pulled back from $\Delta$ to further extend $u_0$ to the whole of $\oM$. If the initial tubular 
neighbourhood was small enough, then the restriction of $\omega_0 + i\partial\bar{\partial}u_0$ to any fibre will be positive. All negative components 
of $\omega_0 + i\partial\bar{\partial}u_0$ on the total space $\oM$ can be compensated by
adding the pullback of a sufficiently positive ``bump $2$-form'' on $\Delta$ supported in an annulus containing the cut-off region; such a pullback is automatically closed $(1,1)$ on $\oM$ and exact on $M$. This creates $\tomega_0$.

We now modify $\tomega_0$ to become asymptotically cylindrical with the correct volume form at infinity. Notation: Define $\Delta(r) = \{|w| < r\}$, fix parameters $s \ll r \ll 1$ to be chosen later, and pick a cut-off function $\chi: \Delta \to \R$ with $\chi = 1$ on $\Delta(r-s)$, $\chi = 0$ away from $\Delta(r+s)$, and $s|\chi_w| + s^2|\chi_{w\bar{w}}| \leq C$.
Fix a bump $2$-form $\beta \geq 0$ on $\Delta$ with support contained in $\Delta(r + 2s) \setminus \Delta(r-2s)$ such that $\beta = \frac{i}{2}dw\wedge d\bar{w}$ on $\Delta(r+s)\setminus\Delta(r - s)$, and identify $\beta$ with its pullback to $\oM$ under the given fibration.

The K{\"a}hler potentials of the cylinder metric
$\frac{i}{2}|w|^{-2}dw \wedge d\bar{w}$ are given by
$u(w) =(\log |w|)^2 + h(w)$ with $h$ any harmonic function. We use these
potentials to define closed $(1,1)$-forms on $M$:
$$\tomega_{t} \equiv \tomega_0 + \lambda i\partial\bar{\partial}( \chi u) + t\beta.$$ 
Being compactly supported, the $t\beta$ term does not change the asymptotics of the metric at infinity, but the extra degree of freedom $t > 0$ is needed to deal with the integral condition (\ref{eq:f:vanish}). Also, $\lambda > 0$ is a fixed real number determined by the condition that
\begin{equation}\label{e:define_lambda}
(\tomega_0|_{\oD})^{n-1} = \frac{2}{n\lambda} i^{(n-1)^2}R \wedge \bar{R},
\end{equation}
where $R = {\rm Res}_{\oD}\Omega$ is the holomorphic volume form on $\oD$ specified by $\Omega = \frac{dw}{w} \wedge R + O(1)$ as $w \to 0$.
The forms $\tomega_{t}$ are then positive definite except possibly over $\Delta(r+s)\setminus \Delta(r - s)$. Moreover, if $\tomega_t$ is in fact positive definite globally, then the associated Riemannian metric on $M$ is ACyl and the volume form $\tomega_t^n$ is 
exponentially asymptotic to $i^{n^2} \Omega\wedge\bar{\Omega}$. (To show that $M$ is ACyl, fix a local trivialisation $\Psi: \Delta \times \oD \hookrightarrow \oM$ of the fibration such that $\Psi(0,x) = x$ for all $x \in \oD$ and $d\Psi$ is $\C$-linear along $\oD$; \cf \ref{obs:complex_trivial}. Then we obtain an ACyl map $\Phi$ by substituting $w = e^{-t-i\theta}$ in $\Psi$ as usual.)

We complete the construction by choosing $h(w) = (\log r)^2 - (2 \log r)\log |w|$. This implies that
\begin{equation}\label{e:normpot}
|u| + s|u_w| \leq C\frac{|{\log r}|}{r^2}s^2
\end{equation}
in the gluing region $\Delta(r+s)\setminus\Delta(r-s)$, by Taylor expansion around $|w| = r$.\medskip\

\noindent {\bf Claim.} Given any fixed choice of $r \ll 1$ and $s \ll r$, there exists a unique value of $t > 0$ such that we have $\tomega_{t} > 0$ globally and $\int_M (\tomega_{t}^n - i^{n^2}\Omega \wedge \bar{\Omega}) = 0$.\medskip\

Thus for any choice of $s \ll r \ll 1$ we obtain an ACyl K\"ahler metric $\tomega = \tomega_t$ such that the function $f \in C^\infty_\epsilon(M)$ associated with $\tomega$ by (\ref{e:define_f}) satisfies (\ref{eq:f:vanish}) with respect to $(M, \tomega)$. Then Theorem \ref{t:ana-exi} can be applied. (The resulting Calabi-Yau metric $\omega$ is independent of $r,s$, by Theorem \ref{t:uniqueness}.) \medskip\

\noindent \emph{Proof of the claim.} Using (\ref{e:normpot}), positivity quickly reduces to $t \gg \frac{1}{r^2}|{\log r}|$. The integral condition is equivalent to the following linear equation for $t$: 
\begin{equation}\label{e:linear_volume_form}
\int_M (\tomega_0^n +  n\lambda i\partial\bar{\partial}({\chi}u) \wedge \tomega_0^{n-1} - i^{n^2}\Omega\wedge\bar{\Omega}) + nt \int_M \beta \wedge \tomega_0^{n-1} = 0.
\end{equation}
The $t$-coefficient is positive and $\sim rs$. The constant term can be split as a sum of three contributions: $O(r)$ from $\Delta(r-s)$ since the integrand is $O(|w|^{-1}\tomega_0^n)$ there due to our choice of $\lambda$; $O(|{\log r}|\frac{s}{r})$ from
the gluing region, using (\ref{e:normpot}) again; and a negative part $\sim \log r$ from the rest of $M$. We see that the solution $t \sim \frac{1}{rs}|{\log r}|$ if $s \ll r \ll 1$, which is well within the positivity constraint. \hfill $\Box$

\subsection*{Part 2: Modifications needed to construct $\tomega$ in general} The key simplification in Part 1 was the existence of a holomorphic fibration. This was used in three related ways:
\begin{enumerate}
\item[(1)]
We can write down our ACyl K\"ahler form $\tomega_t$ without first specifying an ACyl map $\Phi$.
\item[(2)]
The pullback of a $2$-form on $\Delta$ is $(1,1)$ upstairs. (This was used twice:~in the initial process of cutting off $u_0$, and then later when working with the bump $2$-form $\beta$.)
\item[(3)]
The volume form of $\tomega_t$ depends linearly on $t$ because the square of a $2$-form on $\Delta$ is zero.
\end{enumerate}
Absent a holomorphic fibration we will need to make the following changes; since we will frequently refer to results from Appendix \ref{app:trivial_nb}, the reader may find it 
helpful to review this appendix first.

\begin{enumerate}
\item[($1'$)]
We begin by constructing $\Phi$ as in \ref{obs:holo_trivial}. In particular this provides a global defining function $w$
for the divisor such that $\bar{\partial}w = O(|w|^2)$. One consequence of this property is that the $\wedge^2T^*D$-components of $i\partial\bar{\partial}(\log |w|)^2$ are indeed negligible at infinity; \cf the end of Appendix \ref{app:trivial_nb}. 
\item[($2'$)]
We only use bump $2$-forms $\beta$ on $\Delta$ that are radially symmetric. Then $\beta = i\partial\bar{\partial}B$ for a unique function $B$ that vanishes identically near $\partial\Delta$; in return, $B$ blows up like $\log |w|$ at the origin. Instead of pulling back $\beta$ under $w$, we pull back $B$ and compute $i\partial\bar{\partial}$ upstairs.
\item[($3'$)] Since the fibres of $w$ are no longer complex, checking positivity and the integral condition now
involves many new terms. These all turn out to be of lower order because $\bar{\partial}w = O(|w|^2)$.
\end{enumerate}
We will now explain the construction of $\tomega$ in more detail, following the basic outline of Part 1 but taking into account these changes as well as the (rather harmless) orbifold singularities of $\oM$.

\subsection*{Step $\mathbf{1'}$}
By assumption, the holomorphic normal bundle to $\oD$ is isomorphic to
$(\C \times D)/\langle \iota \rangle$, where $D$ is smooth and $\iota \in {\rm Aut}(D)$ acts on the product via $\iota(w,x) = (\exp(\frac{2\pi i}{m}) w, \iota(x))$ with $m = {\rm ord}(\iota)$.

Even if $N_{\oD}$ was isomorphic to $(\C \times D)/\langle \iota\rangle $ only as a \emph{smooth complex} orbifold line bundle, there would already exist
a smooth orbifold embedding $\Psi: (\Delta \times D)/\langle \iota\rangle \hookrightarrow \oM$ such that $\Psi(0,x) = x$ for all $x \in \oD = D/\langle \iota \rangle$ and 
$d\Psi$ is $\C$-linear along $\oD$; compare \ref{obs:complex_trivial}. In particular, if $J$ denotes the complex structure on $\oM$ pulled back to $\Delta \times D$, then $J - J_0 = O(|w|)$ and $\bar{\partial}w = O(|w|)$ with respect to $J$. As in \ref{obs:holo_disks} we can assume that the disks $\Delta \times \{x\}$ are $J$-holomorphic.
Now since $N_{\oD}$ is isomorphic to $(\C \times D)/\langle \iota \rangle$ even as a \emph{holomorphic} orbifold line bundle, \ref{obs:holo_trivial} implies that $\bar{\partial}w = O(|w|^2)$ on $\Delta \times D$. We then define our ACyl diffeomorphism $\Phi$ by substituting $w = e^{-t-i\theta}$ in $\Psi$ as usual.

Let us repeat very explicitly that the $T^*\Delta \otimes (T\Delta \oplus TD)$ component of the endomorphism $J - J_0$ vanishes identically, and its $T^*D \otimes T\Delta$ component, $K$, vanishes to second order at the divisor. 

\subsection*{Step $\mathbf{2'}$} In analogy with Part 1 we now construct the following closed $(1,1)$-forms on $\oM$: 
\begin{align}
\label{e:newform1}\tomega_0 &= \omega_0 + i\partial\bar{\partial}(\chi_0 u_0) + t_0 i\partial\bar{\partial}B_0,\\
\label{e:newform2}\tomega_{t} &= \tomega_0 + \lambda i\partial\bar{\partial}( \chi u) + ti\partial\bar{\partial}B.
\end{align}
Here $\omega_0$ is an orbifold K\"ahler form on $\oM$ representing the given K\"ahler class $\mathfrak{k}$, $\omega_0|_{\oD} + i\partial\bar{\partial}u_0$ is the unique Ricci-flat orbifold K\"ahler form representing $\mathfrak{k}|_{\oD}$, $\lambda$ is as in (\ref{e:define_lambda}), $u$ is a cylinder potential on $\Delta^*$ normalised as in (\ref{e:normpot}), and $t_0,t$ will be chosen later. To explain the remaining pieces we pass to the smooth $\Delta \times D$ cover and work $\iota$-invariantly, as follows.

First we extend $u_0$ to be constant along the $\Delta$-factor. Then we choose radial cut-off functions $\chi_0, \chi$ on $\Delta$ with $\nabla \chi_0, \nabla\chi$ supported in $\Delta(2r_0)\setminus \Delta(r_0)$ and $\Delta(r + s)\setminus \Delta(r-s)$, where $s \ll r \ll r_0 \ll 1$. Finally, we choose radial bump forms $\beta_0,\beta$ supported in $\Delta(3r_0)$ and $\Delta(r+2s)\setminus\Delta(r-2s)$ such that $\beta_0 = \frac{i}{2}dw \wedge d\bar{w}$ on $\Delta(2r_0)$ and $\beta = \frac{i}{2}dw \wedge d\bar{w}$ on $\Delta(r+s) \setminus \Delta(r-s)$, and we use the following lemma 
to construct suitable functions $B_0, B$ on $\Delta^*$ such that $i\partial\bar\partial B_0 = \beta_0$ and $i\partial\bar\partial B = \beta$ on $\Delta^*$. 

\begin{lemma}\label{l:easy_potential}
Let $\gamma$ be a radial $2$-form with compact support on $\Delta$.
\begin{enumerate}
\item\label{it:easy_potential_1} There exists a unique radial function $G$ on $\Delta^*$ such that $G \equiv 0$ near $\partial\Delta$ and $i\partial\bar{\partial}G = \gamma$. Also, if ${\rm supp}(\gamma) 
\subset \Delta(\rho)$ for some $\rho < 1$ then ${\rm supp}(G) \subset \Delta(\rho)$ as well.
\item\label{it:easy_potential_2} We have $G(w) = -\frac{1}{\pi}(\int \gamma)\log |w| + \widehat G(w)$, where $\widehat G$ is radial and smooth at $w = 0$.
\item\label{it:easy_potential_3} We have derivative estimates $|\nabla \widehat{G}(w)| \leq \psi(|w|) \frac{1}{|w|}(|w|^2 - \rho_0^2) $ and $|\nabla^2 \widehat{G}| \leq \sqrt{10}\psi(|w|)$, where
 $\psi(\rho) \equiv \max_{|v| \leq \rho} |\gamma(v)|$ and $\rho_0 \equiv \max\{0,\max\{\rho \geq 0: \psi(\rho) = 0\}\}$.
\end{enumerate}
\end{lemma}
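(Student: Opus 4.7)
The plan is to exploit radial symmetry to reduce the equation $i\partial\bar\partial G=\gamma$ to a scalar ODE on $(0,1)$, solve it explicitly by two quadratures, and then read off (ii) and (iii) from the resulting formula. No nontrivial analysis is required.

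\emph{Setup and (i).} Write $\gamma=f(r)\,dx\wedge dy$ with $r=|w|$ and $f$ compactly supported in $[0,1)$, and look for a radial solution $G(r)$. Using $i\partial\bar\partial G=\tfrac{1}{2}(\Delta G)\,dx\wedge dy$, where $\Delta$ denotes the flat Laplacian, the equation becomes
$$
\frac{1}{r}\bigl(rG'(r)\bigr)' = 2f(r),\qquad r\in(0,1).
$$
The requirement $G\equiv 0$ near $\partial\Delta$ forces $G(1)=G'(1)=0$, so integrating twice and applying Fubini gives the unique radial solution
$$
G(r) = 2\int_r^1 s\,f(s)\log(s/r)\,ds.
$$
If $\mathrm{supp}(\gamma)\subset\Delta(\rho)$ then the integrand vanishes for $r\in[\rho,1)$, so $\mathrm{supp}(G)\subset\Delta(\rho)$, establishing (i).

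\emph{(ii).} Splitting $\log(s/r)=\log s-\log r$ in the explicit formula gives, for $r<\rho$,
$$
G(r) = 2\int_0^\rho s\,f(s)\log s\,ds \;-\; 2\log r\int_0^\rho s\,f(s)\,ds.
$$
Since $\int_\Delta\gamma = 2\pi\int_0^\rho s\,f(s)\,ds$, the coefficient of $\log r$ is exactly $-\tfrac{1}{\pi}\int\gamma$. Hence $\widehat G\equiv G+\tfrac{1}{\pi}\bigl(\int\gamma\bigr)\log|w|$ is constant on the disk $\Delta(\rho_0)$ and smooth on $\Delta^*$ (as a sum of smooth functions), so it extends to a smooth radial function on all of $\Delta$.

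\emph{(iii).} Differentiating the identity $\widehat G(r)=G(r)+\tfrac{1}{\pi}\bigl(\int\gamma\bigr)\log r$ and combining with $rG'(r)=-2\int_r^1 s f(s)\,ds$ yields
$$
\widehat G'(r) = \frac{2}{r}\int_0^r s\,f(s)\,ds,\qquad \widehat G''(r) = 2f(r)-\frac{\widehat G'(r)}{r}.
$$
For $r>\rho_0$ the integrand in $\widehat G'$ is supported in $[\rho_0,r]$ where $|f|\le\psi(r)$, so
$$
|\widehat G'(r)| \le \frac{2\psi(r)}{r}\int_{\rho_0}^r s\,ds = \frac{\psi(r)}{r}\bigl(r^2-\rho_0^2\bigr),
$$
which is the gradient bound. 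The Hessian of a radial function has eigenvalues $\widehat G''(r)$ and $\widehat G'(r)/r$; the two formulas above bound these by $3\psi(r)$ and $\psi(r)$ respectively, so the Frobenius norm of $\nabla^2\widehat G$ is at most $\sqrt{9+1}\,\psi(r)=\sqrt{10}\,\psi(r)$.

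\emph{Main obstacle.} There is essentially none; this is an explicit calculation. The only mildly delicate step is tracking the constant $\sqrt{10}$ in the Hessian estimate, which requires using both $|f|\le\psi$ and the already-established bound on $\widehat G'/r$ inside the relation $\widehat G''=2f-\widehat G'/r$.
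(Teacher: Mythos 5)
Your overall route is exactly the paper's: reduce $i\partial\bar\partial G=\gamma$ to the radial ODE $\tfrac1r(rG')'=2f$, integrate twice with the boundary conditions at $\partial\Delta$, split off the logarithm, and read off (iii) from $\widehat G'(r)=\tfrac2r\int_0^r sf(s)\,ds$ and $\widehat G''=2f-\widehat G'/r$ together with the eigenvalue description of the Hessian of a radial function. Parts (i) and (iii) are correct as written.

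Two points in your treatment of (ii) need repair. First, the displayed identity $G(r)=2\int_0^\rho sf\log s\,ds-2\log r\int_0^\rho sf\,ds$ is \emph{not} valid for all $r<\rho$: moving the lower limit of $\int_r^1 sf(s)\log(s/r)\,ds$ from $r$ to $0$ drops the term $2\int_0^r sf(s)\log(s/r)\,ds$, which vanishes only when $f\equiv 0$ on $[0,r]$, i.e.\ for $r\le\rho_0$. (Taken at face value for all $r<\rho$, your formula would make $\widehat G$ constant on all of $\Delta(\rho)\setminus\{0\}$ and hence force $\gamma\equiv 0$ there.) The conclusions you actually use survive: $\widehat G$ is constant on $\Delta(\rho_0)$, and your expression for $\widehat G'$, derived independently by differentiating $G$, is correct, so (iii) is unaffected. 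Second, your proof that $\widehat G$ is smooth at $w=0$ rests entirely on its being constant on $\Delta(\rho_0)$, which says nothing when $\rho_0=0$ --- and this case occurs in the application, since the bump form $\beta_0$ of Step $2'$ equals $\tfrac{i}{2}dw\wedge d\bar w$ on $\Delta(2r_0)$ and so does not vanish near the origin. To close this, note that $\widehat G'(\rho)=\tfrac2\rho\int_0^\rho f(\sigma)\sigma\,d\sigma$ extends to a smooth odd function of $\rho\in\R$ (because $f$ is radial and smooth, hence even in $\rho$), so $\widehat G$ is a smooth even function of $\rho$ and therefore a smooth function of $|w|^2$; alternatively, $\widehat G$ is bounded near $0$ and satisfies $\tfrac12\Delta\widehat G=f$ on $\Delta^*$, so the singularity is removable by elliptic regularity.
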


Before proving this lemma, let us record its main consequences for Step $3'$. Recall that $K$ denotes the $T^*D \otimes T\Delta$ component of $J - J_0$, introduced at the end of Step $1'$ and discussed in Appendix \ref{app:trivial_nb}, and that we have $K = O(|w|^2)$ because the normal bundle of $D$ is holomorphically trivial.

\begin{corollary}\label{cor:error_bounds}
Let $p: \Delta^* \times D \to \Delta^*$ denote projection onto the first factor.
Keeping the notation of \ref{l:easy_potential}, the form $i\partial\bar{\partial}(G \circ p)$ upstairs has support contained in $\Delta(\rho) \times D$ if $\gamma$ has support contained in $\Delta(\rho)$. Moreover, it can be decomposed
as $i\partial\bar{\partial}(G \circ p) = p^*\gamma -\frac{1}{\pi}(\int \gamma) \eta + \widehat\gamma$, where 
\begin{align}
\label{e:error_bounds_1}\eta = i\partial\bar{\partial}\log |w| = -\frac{1}{2}d({\rm Re}(d\log w) \circ K)  = \begin{cases}0 &{\rm \textit{horizontally}},\\
O(1) & {\rm \textit{mixed\;directions}},\\
O(|w|) & {\rm \textit{vertically}};\end{cases}\\
\label{e:error_bounds_2}\widehat{\gamma} = -\frac{1}{2}d(d\widehat G \circ K) = \begin{cases}0 &{\rm \textit{horizontally}},\\
O(\psi(|w|)|w|^2) & {\rm \textit{mixed\;directions}},\\
O(\psi(|w|)(|w|^2-\rho_0^2)|w|) & {\rm \textit{vertically}}.\end{cases}
\end{align}
The implied constants here are independent of $\gamma$ and in fact only depend on $K$.
\end{corollary}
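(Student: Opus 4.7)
The plan is to decompose $i\partial\bar{\partial}(G\circ p)$ by applying the splitting $G = -\frac{1}{\pi}(\int\gamma)\log|w| + \widehat G$ from Lemma \ref{l:easy_potential}\ref{it:easy_potential_2} and then comparing $i\partial\bar{\partial}$ on $\Delta^*\times D$ with its analogue $i\partial_0\bar{\partial}_0$ for the product complex structure $J_0$. The support statement is immediate: $G \circ p$ vanishes outside $\Delta(\rho) \times D$ by Lemma \ref{l:easy_potential}\ref{it:easy_potential_1}, and $i\partial\bar{\partial}$ is a local operator.

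For the decomposition I would start from the elementary identity $(i\partial\bar{\partial} - i\partial_0\bar{\partial}_0)f = -\frac{1}{2}\, d((J^* - J_0^*)df)$ on functions. When $f = h \circ p$ is pulled up from $\Delta^*$, the $1$-form $df = p^* dh$ is horizontal; and because $J - J_0 = K$ annihilates horizontal vectors and sends vertical ones into horizontal directions, $(J^* - J_0^*)df$ is purely vertical and coincides with $dh \circ K$. Specialising this with $h = \log|w|$---which is $J_0$-pluriharmonic on $\Delta^*$, so that $i\partial_0\bar\partial_0(\log|w|\circ p) = 0$---yields $\eta = -\frac{1}{2}d({\rm Re}(d\log w) \circ K)$, using $d\log|w| = {\rm Re}(d\log w)$. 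Specialising with $h = \widehat G$, which satisfies $i\partial\bar\partial \widehat G = \gamma$ on $\Delta^*$ by Lemma \ref{l:easy_potential}\ref{it:easy_potential_1}--\ref{it:easy_potential_2}, yields $i\partial\bar{\partial}(\widehat G \circ p) = p^*\gamma + \widehat\gamma$ with $\widehat\gamma = -\frac{1}{2}d(d\widehat G \circ K)$. Combining the two contributions with the coefficients from the splitting of $G$ produces the stated formula.

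What remains are the pointwise bounds, which are a routine computation combining $|K|_{g_0} = O(|w|^2)$ from Step $1'$ with the bounds on $\widehat G$ from Lemma \ref{l:easy_potential}\ref{it:easy_potential_3}. Both $1$-forms ${\rm Re}(d\log w)\circ K$ and $d\widehat G \circ K$ are vertical by construction, of vertical norms $O(|w|)$ and $O(\psi(|w|)(|w|^2-\rho_0^2)|w|)$ respectively. Taking the exterior derivative then kills the horizontal-horizontal slot outright, costs one extra factor $|w|^{-1}$ in the mixed slot from horizontal differentiation (with the bound $|\nabla^2 \widehat G| \leq \sqrt{10}\,\psi$ controlling the term where $d$ falls on $d\widehat G$ itself), and preserves the order in the vertical-vertical slot since vertical differentiation does not alter the $|w|$-order. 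These match the claimed bounds exactly. I do not anticipate any real obstacle beyond this bookkeeping: the key insight---that the entire deviation from the flat product case is captured by the off-diagonal tensor $K$---makes the formula for $\eta$ essentially forced by the $J_0$-pluriharmonicity of $\log|w|$.
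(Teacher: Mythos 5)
Your argument is correct and is essentially the paper's own: the paper likewise splits $G$ via Lemma \ref{l:easy_potential}\ref{it:easy_potential_2}, uses the identity $i\partial\bar{\partial}\phi = -\frac{1}{2}d(d\phi\circ J) = i\partial_0\bar{\partial}_0\phi - \frac{1}{2}d(d\phi\circ K)$ for $\phi$ pulled back from $\Delta$, together with $i\partial_0\bar\partial_0(\widehat G\circ p) = p^*\gamma$ and the $J_0$-pluriharmonicity of $\log|w|$, and then reads off the componentwise bounds from $K = O(|w|^2)$ and Lemma \ref{l:easy_potential}\ref{it:easy_potential_3} exactly as in the informal discussion at the end of Appendix \ref{app:trivial_nb}. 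Your bookkeeping for the horizontal/mixed/vertical slots matches the paper's.
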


The stated decomposition of $i\partial\bar\partial(G \circ p)$ follows quickly by observing that $p^*\gamma = i\partial_0\bar{\partial}_0(\widehat{G} \circ p)$
and that $i\partial\bar{\partial}\phi = -\frac{1}{2}d(d\phi \circ J) = i\partial_0\bar\partial_0\phi - \frac{1}{2}d(d\phi \circ K)$ whenever $\phi$ is pulled back from the base disk, $\Delta$.
Similar estimates are discussed informally at the end of Appendix \ref{app:trivial_nb}.

\begin{proof}[Proof of Lemma \ref{l:easy_potential}]
We write $\gamma = g \frac{i}{2}dw \wedge d\bar{w}$, so that $\frac{1}{2}\Delta_{\R^2}G = g$. 
Since the radial component of $\Delta_{\R^2}$ is given by $\frac{1}{\rho}\partial_\rho(\rho\partial_\rho)$, we obtain the following representation for $G$, proving \ref{it:easy_potential_1}:
\begin{equation}\label{e:easy_potential_1}
G(w) = \int_1^{|w|} \frac{2}{\rho}\int_1^\rho g(\sigma) \sigma\, d\sigma \, d\rho.
\end{equation}
Then we decompose the $d\sigma$ integral in \eqref{e:easy_potential_1} as $\int_1^\rho = \int_1^0 + \int_0^\rho $, which proves \ref{it:easy_potential_2}  with
\begin{equation}\label{e:easy_potential_2}
\widehat{G}(w) = \int_1^{|w|}\frac{2}{\rho}\int_0^\rho g(\sigma)\sigma \, d\sigma \, d\rho.
\end{equation}
For \ref{it:easy_potential_3} we first observe that  $|\nabla \widehat{G}| = |\widehat{G}_\rho|$ and $|\nabla^2 \widehat{G}|^2 = \widehat{G}_{\rho\rho}^2 + \frac{1}{\rho^2}\widehat{G}_\rho^2$. Now
\eqref{e:easy_potential_2} yields
\begin{equation}
\widehat{G}_\rho(w) = \frac{2}{|w|}\int_{0}^{|w|} g(\sigma)\sigma \, d\sigma, \;\, \widehat{G}_{\rho\rho}(w) = -\frac{1}{|w|}\widehat{G}_\rho(w)  + 2g(w),
\end{equation}
and hence the claim by applying the triangle inequality.
\end{proof}

\subsection*{Step $\mathbf{3'}$} If $\tomega_t$ is positive definite, then the associated Riemannian metric will indeed be ACyl with respect to the diffeomorphism $\Phi$ from Step $1'$ since $\bar{\partial}w = O(|w|^2)$; see again the end of Appendix \ref{app:trivial_nb}. Hence all that remains to be done is to prove the counterpart of the Claim in Part 1.

First we show that $\tomega_0$ of (\ref{e:newform1}) is positive for $r_0 \ll 1$ and $t_0 \sim r_0^{-2}$. The first issue is that the good term $i\partial\bar{\partial}B_0$ no longer has only horizontal components. However, Corollary \ref{cor:error_bounds} with $\gamma = \beta_0$ shows that the mixed and vertical components of $i\partial\bar{\partial}B_0$ are controlled by $(\int\beta_0)\eta$ and $\widehat{\beta}_0$; more precisely, the
mixed parts are $O(r_0^2)$ and the vertical parts are $O(r_0^2|w|)$. Thus, $\omega_0 + t_0i\partial\bar{\partial}B_0$ is bounded below by a smooth K\"ahler form on $\oM$ if $r_0 \ll 1$ and $t_0 = o(r_0^{-3})$, and has a positive horizontal component $\sim t_0$ on $\Delta(2r_0) \times D$ if $t_0 \gg 1$. We must now prove that choosing $t_0 \sim r_0^{-2}$ compensates all negative components of
$i\partial\bar{\partial}(\chi_0 u_0)$ over the annulus $(\Delta(2r_0)\setminus \Delta(r_0))\times D$. This is clear horizontally, and the mixed or vertical components are negligible.
\Eg the worst term, $u_0 i\partial\bar{\partial}\chi_0$, contributes $u_0 d(d\chi_0 \circ K)$ to these errors; the mixed components of this are 
$O(1)$ and the vertical ones are $O(r_0)$.

Positivity of $\tomega_t$ in (\ref{e:newform2}) is similar. First, Corollary \ref{cor:error_bounds} applied with $\gamma = \beta$ tells us that $i\partial\bar{\partial}B$ has
$O(rs + \chi_{\rm ann}r^2)$ mixed and $O(rs|w|)$ vertical components; here $\chi_{\rm ann}$ is the smooth function defined by $\beta = \chi_{\rm ann} \frac{i}{2}dw \wedge d\bar{w}$, which is essentially equal to the indicator function of the gluing annulus. On the other hand, 
the horizontal component of $i\partial\bar\partial B$ is always nonnegative and $\sim 1$ over the annulus.
Thus, $\tomega_0 + \lambda \chi i \partial\bar{\partial}u + ti\partial\bar{\partial}B$ is again bounded below by some smooth K\"ahler form on $\oM$ as long as 
$t = o(\frac{1}{r^2s})$, and has a horizontal component $\sim t$ in the gluing region if $t \gg 1$. 
Now we need to add on the error terms involving derivatives of $\chi$, and we claim that---exactly as in the fibred case---taking $t \gg \frac{1}{r^2}|{\log r}|$ restores positivity.
This is obvious
horizontally, and the mixed or vertical components are again negligible. \Eg the worst term $ui\partial\bar{\partial}\chi$ contributes $ud(d\chi \circ K)$,
which has $O(|{\log r}|)$ mixed and $O(s|{\log r}|)$ vertical pieces; Cauchy-Schwarz allows us to bound the mixed ones from below by a 
horizontal term which is $O(\frac{1}{r}|{\log r}|) = o(t)$ and a vertical term which is $O(r|{\log r}|)$.

It remains to see that the integral condition is still satisfied for some $t \sim \frac{1}{rs}|{\log r}|$. This condition is now a degree $n$ equation in $t$ whose constant and linear coefficients are small perturbations of the ones in (\ref{e:linear_volume_form}), and whose $t^2, \dots, t^n$ coefficients are small. More precisely, we want to solve
\begin{equation}\label{e:nonlinear_volume_form}
(c_0 + \sum_{p=2}^{n} \epsilon_{0,p}) + (c_1 + \sum_{p=1}^{n-1} \epsilon_{1,p})t + \sum_{\ell = 2}^n (\sum_{p=0}^{n-\ell} \epsilon_{\ell,p})t^\ell = 0,
\end{equation}
where $c_0$ and $c_1$ are defined exactly like the constant and linear terms in (\ref{e:linear_volume_form}), and 
\begin{equation}\label{define_epsilons}
\epsilon_{\ell,p} \sim \int_M (i\partial\bar{\partial}B)^\ell \wedge (i\partial\bar{\partial}(\chi u))^p \wedge \tilde{\omega}_0^{n-\ell-p}\;\,{\rm for}\;\,\ell + p \in \{2,\dots,n\}.
\end{equation}
These integrals are small because they involve wedge products of almost horizontal $2$-forms.

The main tool needed to carry out the actual estimates is the following table:
\begin{equation}\label{aux_epsilons}
\begin{split}
i\partial\bar{\partial}B = \begin{cases}\chi_{\rm ann} &{\rm horizontally},\\
O(rs + \chi_{\rm ann}r^2) & {\rm mixed\;directions},\\
O(rs|w|) & {\rm vertically},\end{cases} \\
 i\partial\bar{\partial}(\chi u) = \begin{cases} \frac{1}{|w|^2} + O(\chi_{\rm ann}\frac{|{\log r}|}{r^2}) &{\rm horizontally},\\
O(|{\log |w|}|) & {\rm mixed\;directions},\\
O(|w||{\log |w|}|) & {\rm vertically},\end{cases}
\end{split}
\end{equation}
on $\Delta(r + 2s) \times D$. Here $\chi_{\rm ann}$ is again defined by $\beta = \chi_{\rm ann} \frac{i}{2}dw \wedge d\bar{w}$ and the bounds for $i\partial\bar\partial B$ follow from Corollary \ref{cor:error_bounds}, whereas the ones for $i\partial\bar{\partial}(\chi u)$ follow from a direct computation (compare again the end of Appendix \ref{app:trivial_nb}). Given this information and the fact that $({\rm\textit{horizontal}}\,)^{\wedge a} \wedge ({\rm\textit{mixed}}\,)^{\wedge b} = 0$ if
$a \geq 2$ or $a = 1, b \geq 1$ or  $b \geq 3$, a lengthy computation (see Appendix \ref{s:nonfibred_errors}) yields that
\begin{equation}\label{result_epsilons}
c_0 + \sum_{p=2}^{n} \epsilon_{0,p}\sim -|{\log r}|, \;\, c_1 + \sum_{p=1}^{n-1} \epsilon_{1,p} \sim rs, \;\, \sum_{p = 0}^{n-\ell} \epsilon_{\ell, p} = O((r^2s)^{\ell-1}(rs + r^3)) \;\,{\rm for} \;\,\ell \geq 2.
\end{equation}
Estimating the $\epsilon_{\ell,p}$ with $\ell \geq 2$ is the most difficult step; the main contribution arises by integrating $({\rm \textit{vertical}}\,)^{\ell-1}({\rm \textit{horizontal}}\,)$ and $({\rm \textit{vertical}}\,)^{\ell-2}({\rm \textit{mixed}}\,)^2$ type terms over the annulus for $p = 0$.

We now concentrate on the interval $t \sim \frac{1}{rs}|{\log r}|$, which contains the unique zero of the linear part of (\ref{e:nonlinear_volume_form}). At the two boundary points, the linear part of  (\ref{e:nonlinear_volume_form}) is comparable to $\pm |{\log r}|$, while the nonlinear terms of  (\ref{e:nonlinear_volume_form}) are at worst $O(r|{\log r}|^2(1 + \frac{1}{s}r^2))$ on the whole interval.
Thus it suffices to choose $1 \gg r_0 \gg r \gg s \gtrsim r^2$ (unlike in Part 1, we are not free to make $s$ arbitrarily small).

\subsection{Proof of the analytic existence theorem}\label{s:proof_ana_exi} The proof of Theorem \ref{t:ana-exi} requires a nontrivial technical preliminary:~the proof of a global Sobolev inequality on $M$. Such inequalities are sensitive to the volume growth at infinity, and need to take rather different shapes depending on whether the growth rate is slower
or faster than quadratic. Our proof follows the strategy expounded in \cite{gsc}; see also \cite{hein1, min} for closely related results and applications.

\begin{prop}\label{p:wsob}
Let $(M^n, g)$ be an {\rm ACyl} manifold as in Definition \ref{d:ACyl}.
Then for all $\mu > 0$ there exists a piecewise constant positive function
$\psi_\mu = O(e^{-2\mu t})$ with $\int_M \psi_\mu\,d{\rm vol} = 1$ such that
\begin{equation}\label{e:wsob}
\|e^{-\mu t}(u-\bar{u}_\mu)\|_{2\sigma} \leq C_{M,\mu,\sigma} \|\nabla u\|_2
\end{equation} 
holds for all $\sigma \in [1,\frac{n}{n-2}]$ and all $u \in C^\infty_0(M)$, where $\bar{u}_\mu \equiv \int_M u\psi_\mu\,d{\rm vol}$.
\end{prop}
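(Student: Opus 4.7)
The plan is to mimic the standard Grigor'yan--Saloff-Coste strategy \cite{gsc}: combine local Sobolev--Poincar\'e inequalities on unit-sized pieces of $M$ with a chaining argument that exploits the exponential weight. After shifting $t$, we may assume $M = U \cup ([0,\infty) \times X)$ with $U$ bounded and $X$ compact, and that the ACyl metric agrees with $g_\infty = dt^2 + h$ outside $U$ up to $O(e^{-\delta t})$-errors that will be harmless after choosing the reference cover slightly coarser. Partition $[0,\infty)\times X$ into the unit slabs $S_k = [k,k+1]\times X$ ($k\in\N_0$), set $S_{-1}=U$, and define
\[
\psi_\mu \equiv c_\mu \sum_{k\geq -1} e^{-2\mu k}\,\mathbf{1}_{S_k},
\]
where $c_\mu>0$ is chosen so that $\int_M \psi_\mu\,d\mathrm{vol}=1$. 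Clearly $\psi_\mu$ is positive, piecewise constant, and $O(e^{-2\mu t})$. Denote by $\bar u_k$ the unweighted average of $u$ over $S_k$ and by $V_k=\mathrm{vol}(S_k)$, so that $\bar u_\mu = c_\mu \sum_k e^{-2\mu k} V_k \bar u_k$ and $\sum_k c_\mu e^{-2\mu k} V_k = 1$.

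First I would establish a uniform local Sobolev--Poincar\'e inequality: on the compact Riemannian manifold $[0,1]\times X$ we have $\|v-\bar v\|_{L^{2\sigma}}\leq C_\sigma \|\nabla v\|_{L^2}$ for every $\sigma\in[1,\tfrac{n}{n-2}]$ by the standard Sobolev embedding together with the Poincar\'e inequality. Since each $S_k$ for $k\geq 0$ is isometric (up to an exponentially small perturbation absorbed by enlarging $U$ once and for all) to $[0,1]\times X$, and since $U$ is a fixed bounded domain, this gives
\[
\|u-\bar u_k\|_{L^{2\sigma}(S_k)} \;\leq\; C\,\|\nabla u\|_{L^2(S_k)} \quad\text{for all }k\geq -1,
\]
with $C$ independent of $k$.

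Second I would control differences of consecutive slab averages. Applying the previous inequality with $\sigma=1$ on $S_k\cup S_{k+1}$ (isometric to $[0,2]\times X$) gives $|\bar u_k-\bar u_{k+1}|\leq C'\,\|\nabla u\|_{L^2(S_k\cup S_{k+1})}$. Writing $\bar u_k - \bar u_\mu = c_\mu \sum_j e^{-2\mu j} V_j(\bar u_k-\bar u_j)$ and telescoping $\bar u_k-\bar u_j$ through consecutive slabs, the triangle inequality produces
\[
|\bar u_k - \bar u_\mu| \;\leq\; C''\sum_{i\geq -1} w(k,i)\,\|\nabla u\|_{L^2(S_i\cup S_{i+1})},
\]
where $w(k,i) \equiv c_\mu \sum_{j:\, i \text{ between } k,j} e^{-2\mu j}V_j \lesssim_\mu e^{-2\mu\max(i,k)}$ because the geometric tails $\sum_{j\geq N} e^{-2\mu j}$ are comparable to $e^{-2\mu N}$. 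Plugging in and using Cauchy--Schwarz on the $i$-sum,
\[
|\bar u_k - \bar u_\mu|^2 \;\lesssim_\mu\; e^{-2\mu k}\!\sum_i e^{-2\mu\max(i,k)}\,\|\nabla u\|_{L^2(S_i\cup S_{i+1})}^{2} \;\lesssim_\mu\; e^{-2\mu k}\,\|\nabla u\|_{L^2(M)}^{2}.
\]

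Finally I would assemble the inequality. From $u-\bar u_\mu = (u-\bar u_k)+(\bar u_k-\bar u_\mu)$ on $S_k$ and the triangle inequality in $L^{2\sigma}(S_k)$,
\[
\int_{S_k} e^{-2\mu\sigma t}|u-\bar u_\mu|^{2\sigma} \;\lesssim_\mu\; e^{-2\mu\sigma k}\Bigl(\|u-\bar u_k\|_{L^{2\sigma}(S_k)}^{2\sigma} + V_k\, |\bar u_k-\bar u_\mu|^{2\sigma}\Bigr).
\]
Summing over $k$, the first contribution is bounded by $\sum_k e^{-2\mu\sigma k}\,\|\nabla u\|_{L^2(S_k)}^{2\sigma}\leq\bigl(\sum_k\|\nabla u\|_{L^2(S_k)}^{2}\bigr)^{\sigma}=\|\nabla u\|_{L^2(M)}^{2\sigma}$, using $\sigma\geq 1$ and $e^{-2\mu\sigma k}\leq 1$. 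The second contribution is bounded by $\sum_k e^{-2\mu\sigma k}V_k \bigl(e^{-2\mu k}\|\nabla u\|_{L^2(M)}^{2}\bigr)^{\sigma} \lesssim_\mu \|\nabla u\|_{L^2(M)}^{2\sigma}$ since the geometric sum converges ($V_k$ is uniformly bounded). Raising to the power $1/(2\sigma)$ yields \eqref{e:wsob}.

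The main obstacle is the bookkeeping in the chaining step: one has to check that the double geometric series in $w(k,i)$ really does produce the clean factor $e^{-2\mu k}$ (this is where the Cauchy--Schwarz splitting of $e^{-2\mu\max(i,k)}$ into $e^{-\mu\max(i,k)}\cdot e^{-\mu\max(i,k)}$ matters), and that no dependence on the tail of $u$ sneaks in because the exponential weight dominates the uniform volume growth of the slabs. Once this is organized, the rest is a direct consequence of the compact Sobolev--Poincar\'e inequality on $[0,1]\times X$.
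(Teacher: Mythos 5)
Your overall strategy coincides with the paper's: cut the end into unit slabs, apply a uniform local Sobolev--Poincar\'e inequality on each slab, and control the deviation of the slab averages $\bar{u}_k$ from the weighted global average $\bar{u}_\mu$ by telescoping through consecutive differences $|\bar{u}_k-\bar{u}_{k+1}|$, each of which is bounded by $\|\nabla u\|_{L^2}$ on a double slab via Poincar\'e. The weight $\psi_\mu$ you choose is the paper's up to the harmless factor $V_k$, and the final assembly using $\ell^2\subset\ell^{2\sigma}$ is also the same.

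There is, however, a concrete error in the chaining step. For $i<k$, the indices $j$ for which $i$ lies between $k$ and $j$ are exactly the $j\le i$, so $w(k,i)=c_\mu\sum_{j\le i}e^{-2\mu j}V_j$ is a \emph{head} of the normalising series, not a tail: it is bounded \emph{below} by the fixed constant $c_\mu e^{2\mu}V_{-1}>0$ uniformly in $k$. Hence the claimed bound $w(k,i)\lesssim_\mu e^{-2\mu\max(i,k)}$ fails for $i<k$, and the displayed consequence $|\bar{u}_k-\bar{u}_\mu|^2\lesssim_\mu e^{-2\mu k}\|\nabla u\|_{L^2(M)}^2$ is false: take $u\equiv 1$ on $U\cup S_0$, cut off to $0$ for $t\ge 2$; then $\|\nabla u\|_2$ and $\bar{u}_\mu$ are fixed positive constants while $\bar{u}_k=0$ for $k\ge 2$, so $|\bar{u}_k-\bar{u}_\mu|$ does not decay in $k$. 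What is true is $w(k,i)\lesssim_\mu\min(1,e^{-2\mu i})$, which by Cauchy--Schwarz gives only $|\bar{u}_k-\bar{u}_\mu|^2\lesssim_\mu (k+2)\,\|\nabla u\|_{L^2(M)}^2$. Fortunately this weaker (correct) bound still closes your final estimate, since the weight contributes $e^{-2\mu\sigma k}$ and $\sum_k e^{-2\mu\sigma k}V_k(k+2)^{\sigma}<\infty$; so the argument is repairable essentially as written. Note that the paper sidesteps the issue by never seeking a pointwise-in-$k$ bound: using that $\bar{u}_\mu$ is the $e^{-2\mu i}$-weighted average of the $\bar{u}_i$, it rewrites $\sum_i e^{-2\mu i}|\bar{u}_i-\bar{u}_\mu|^2$ as a constant times $\sum_{i<j}e^{-2\mu(i+j)}|\bar{u}_i-\bar{u}_j|^2$ and only then telescopes, so the full weight $e^{-2\mu(i+j)}$ is carried through the estimate.
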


The subtraction of an average on the left-hand side of (\ref{e:wsob}) is inevitable because $M$ has less than quadratic volume growth. In \cite{tianyau90}, the relation (\ref{eq:f:vanish}) is directly applied to compensate this.

\begin{proof}[Proof of Proposition \ref{p:wsob}]
We have $M = \bigcup {\rm clos}(A_i)$, where $A_0 = U$ and $A_i = (i-1, i) \times X$ for $i \in \N$, and we begin by discretising the left-hand side of (\ref{e:wsob}) accordingly:
\begin{equation}\label{e:aux1}
\|e^{-\mu t}(u-\bar{u}_\mu)\|_{2\sigma}^2 \leq C\sum \|\chi_i(u - \bar{u}_i)\|_{2\sigma}^2 + C \sum e^{-2\mu i}|\bar{u}_i - \bar{u}_\mu|^2,
\end{equation}
where $\chi_i$ is the characteristic function of $A_i$ and $\bar{u}_i$ is the
average of $u$ over $A_i$. Since the $A_i$ have uniformly bounded geometry,
$\|\chi_i(u - \bar{u}_i)\|_{2\sigma} \leq C\|\chi_i\nabla u\|_{2}$ by the usual
Sobolev inequality. Thus, it suffices to estimate the second sum in
(\ref{e:aux1}). This involves defining the weight function $\psi_\mu$. In order for our argument to go through, we require that $\sum e^{-2\mu i}(\bar{u}_i - \bar{u}_\mu) = 0$ for all test functions $u$, and so we define $\psi_\mu \equiv \phi_\mu/\int_M \phi_\mu\,d{\rm vol}$, where $\phi_\mu$ is constant equal to $e^{-2\mu i}/|A_i|$ on $A_i$. Then
\begin{align*}
\sum e^{-2\mu i}|\bar{u}_i - \bar{u}_\mu|^2 \leq C \sum_{i < j} e^{-2\mu(i + j)} |\bar{u}_i - \bar{u}_j|^2 
\leq C \sum_{i < j}  e^{-2\mu(i + j)} |i - j| \sum_{k = i}^{j-1} |\bar{u}_k - \bar{u}_{k+1}|^2. \end{align*}
Next, we define $B_k \equiv {\rm int}({\rm clos}(A_k \cup A_{k+1}))$ and observe that
\begin{align*}
|\bar{u}_k - \bar{u}_{k+1}|^2 \leq \frac{1}{|A_k||A_{k+1}|}\int_{A_k \times A_{k+1}} |u(x) - u(y)|^2\,dx\,dy
\leq \frac{2|B_k|}{|A_k||A_{k+1}|}\int_{B_k} |u - \bar{u}_{B_k}|^2,\end{align*}
where $\bar{u}_{B_k}$ denotes the average of $u$ over $B_k$. Since $B_k$ is connected, we can now apply the standard Poincar{\'e} inequality on $B_k$, which completes the proof.
\end{proof}

\begin{proof}[Proof of Theorem \ref{t:ana-exi}]
The uniqueness claim is proved independently in Section \ref{s:uniqueness} and really only requires that $u \in C^2_\epsilon(M)$. Thus, it suffices to prove the existence of a solution $u \in C^{k+2,\alpha}_\epsilon(M)$ for any given $k \in \N_0$ and $\alpha \in (0,1)$. For this we take 
$\epsilon \in (0,\delta]$ to be smaller than the square root of the first eigenvalue of the Laplacian on the cross-section $X$, and set up a continuity method. Let
\begin{align*}
\mathcal{X} = \{u \in C^{k+2,{\alpha}}_\epsilon(M): \omega_u = \omega + i\partial\bar{\partial}u > 0\}, \;\, \mathcal{Y} = \{f \in C^{k,{\alpha}}_\epsilon(M): \int_M (e^f - 1)\omega^n = 0\}.
\end{align*}
Then $\mathcal{X}$ is an open set, $\mathcal{Y}$ is a hypersurface, and the complex Monge-Amp{\`e}re operator $\mathcal{F}$ given by $(\omega + i\partial\bar{\partial}u)^n = e^{\mathcal{F}(u)}\omega^n$ induces a map $\mathcal{F}: \mathcal{X} \to \mathcal{Y}$. For $u \in \mathcal{X}$, the metric $g_u$ associated with $\omega_u$ is again asymptotically cylindrical (though only of regularity $C^{k,{\alpha}}_\epsilon$) with respect to $\Phi$ and $X$.

Given $f$ as in the statement of the theorem, we wish to solve the family of equations $\mathcal{F}(u_\tau) = f_\tau$ for $u_\tau \in \mathcal{X}$, with $f_\tau \equiv 
\log(1 + \tau(e^f-1)) \in \mathcal{Y}$ for $\tau \in [0,1]$. We have a trivial solution $u_0 = 0$. Next, we need to show that the set of all $\tau$ for which a solution $u_\tau \in \mathcal{X}$ exists is open. For $u \in \mathcal{X}$, 
%\log(1 + \tau(e^f-1)) \in \mathcal{Y}$ for $\tau \in [0,1]$. We have a trivial solution $u_0 = 0$. Next, we need to show that the set of all $\tau$ for which a solution $u_\tau \in \mathcal{X}$ exists is open. For $u \in \mathcal{X}$, 
$$
T_u\mathcal{F} = \frac{1}{2}\Delta_{g_u}: T_u \mathcal{X} = C^{k+2,{\alpha}}_\epsilon(M) \to T_{\mathcal{F}(u)}\mathcal{Y} = C^{k,{\alpha}}_\epsilon(M)_{0,g_u},
$$
the subscripts $0, g_u$ indicating mean value zero with respect to $g_u$, and we must show that this is an isomorphism if $u = u_\tau$. But if $u = u_\tau$, then $\mathcal{F}(u_\tau) = f_\tau$, which implies $u_\tau \in C^\infty_\epsilon(M)$ by a standard bootstrapping argument, and so $g_u$ is smooth enough to apply Proposition \ref{p:invert_laplacian} as written.

It remains to prove a \emph{quantitative} a priori bound on the $C^{k+2,\alpha}_\epsilon$-norm of $u_\tau$, using the \emph{qualitative} information that $u_\tau \in C^\infty_\epsilon(M)$. We proceed in a sequence of four partial a priori estimates. We will abbreviate $u = u_\tau$ and $f = f_\tau$, but all constants are understood to be independent of $\tau$. 

\subsection*{Step 1: $C^0$ from Moser iteration} We apply Moser iteration as in \cite[\S 3.1]{hein1} or \cite[Lemma 3.5]{tianyau90} to derive an a priori bound on the sup norm of $u$. First let us recall the basic underlying computation. To this end, fix $T > 0$ and define an auxiliary form $\eta \equiv \sum_{k = 0}^{n-1} \omega^k \wedge \omega_u^{n-1-k}$. Then we have
\begin{equation}\label{e:ibp}
\int_{t < T} |\nabla|u|^{\frac{p}{2}}|^2 \omega^n \leq -\frac{np^2}{2(p-1)}\left[\int_{t < T} u|u|^{p-2}(e^f-1)\omega^n - \frac{1}{2}\int_{t = T} u|u|^{p-2} d^c u \wedge \eta\right]
\end{equation}
for all $p > 1$. See \cite[p.~212]{blocki} for this, although in \cite{blocki} there are of course no boundary terms. Notice that (\ref{e:ibp}) still holds with $u$ replaced by $u - \lambda$ for any constant $\lambda \in \R$, 
and also that the boundary term goes to zero as $T \to \infty$ (no matter what $\lambda$ we subtract) because $d^c(u - \lambda) = O(e^{-\epsilon t})$.

We begin the iteration process by setting $p = 2$ and $\lambda = \bar{u}_{\mu}$ (as in Proposition \ref{p:wsob}), with $\mu$ to be determined as we go along. If $\mu < \epsilon$, then (\ref{e:wsob}) and (\ref{e:ibp}) imply that
$$\|e^{-\mu t}(u - \bar{u}_\mu)\|_{2\sigma}^2 \leq C\|\nabla u\|_2^2 \leq C\|e^{-\epsilon t}(u - \bar{u}_\mu)\|_1 \leq C\|e^{-\mu t}(u - \bar{u}_\mu)\|_{2\sigma}.$$
To continue the iteration, we will prove that, for all $\sigma \in (1,2)$ with $2\mu\sigma < \epsilon$ and for all $k \in \N_0$,
\begin{equation}\label{e:ind}
\left\|e^{-\mu t}|u - \bar{u}_\mu|^{\sigma^{k+1}}\right\|_{2\sigma}^2 \leq C\sigma^k\max\left\{1,\left\|e^{-\mu t}|u-\bar{u}_\mu|^{\sigma^k}\right\|_{2\sigma}^{2\sigma} \right\}
\end{equation}
Given this, a standard argument \cite[p.~212]{blocki} then shows that the $L^{2\sigma^k}$-norm of $u - \bar{u}_\mu$ with respect to the measure $e^{-2\mu\sigma t}d{\rm vol}$ is bounded uniformly in $k$, so that $\|u-\bar{u}_\mu\|_\infty \leq C$. Since $u = O(e^{-\epsilon t})$, we deduce that $|\bar{u}_\mu| \leq C$, hence $\|u\|_\infty \leq C$ as desired.

In order to prove (\ref{e:ind}), we first apply (\ref{e:ibp}) with $p = 2\sigma^{k+1}$ and with $u$ replaced by $u - \bar{u}_\mu$, and then (\ref{e:wsob}). Abbreviating $u_k \equiv |u - \bar{u}_\mu|^{\sigma^k}$, this yields the following inequalities:
$$
\|e^{-\mu t}(u_{k+1} - \overline{u_{k+1}}{\hskip0.1mm}_{,\hskip0.2mm\mu})\|_{2\sigma}^2 \leq C\|\nabla u_{k+1}\|_2^2 \leq C\sigma^k \|e^{-\epsilon t}|u - \bar{u}_\mu|^{2\sigma^k-1}\|_1.
$$
Proceeding on the right-hand side, H{\"o}lder's inequality tells us that
$$
\|e^{-\epsilon t}|u-\bar{u}_\mu|^{2\sigma^k-1}\|_1 \leq C\|e^{(2\mu\sigma-\epsilon)t}\|_{2\sigma^{k+1}}\max\{1,\|e^{-\mu t}u_k\|_{2\sigma}^{2\sigma}\},
$$
and if $2\mu\sigma < \epsilon$ then the prefactor converges to $1$ as $k \to \infty$. On the other hand, 
$$\|e^{-\mu t} \overline{u_{k+1}}{\hskip0.1mm}_{,\hskip0.2mm\mu}\|_{2\sigma}^2 = \|e^{-\mu t}\|_{2\sigma}^2 \|\psi_\mu u_{k+1}\|_1^2 \leq C\|e^{(\sigma - 2)\mu  t}\|_2^2 \|e^{-\mu t}u_k\|_{2\sigma}^{2\sigma},$$
which is finite if $\sigma < 2$, and of the required form. All in all, this proves (\ref{e:ind}).

\subsection*{Step 2: $C^0$ implies $C^{\infty}$} We do not need to say very much here. Given that functions in the space $\mathcal{X}$ attain their extrema on $M$ and that $M$ has uniformly bounded geometry at infinity, the classical arguments proving Step 2 in the compact case \cite[\S 5.5, \S5.6]{blocki} go through verbatim.

\subsection*{Step 3: $C^\infty$ implies $C^\infty_{\epsilon'}$ for some uniform $\epsilon' \in (0,\epsilon]$} This is a special case of an energy decay argument from \cite[Prop 2.9(i)]{hein}, which we use as an a priori estimate here. We begin by writing out the counterpart of the $p = 2$ case of (\ref{e:ibp}) for the outer domain $\{t > T\}$: 
\begin{equation}\label{e:ibp2}
\int_{t > T} |\nabla u|^2 \omega^n \leq -2n\left[\int_{t > T} u(e^f-1)\omega^n + \frac{1}{2}\int_{t = T} u\,d^c u \wedge \eta\right].
\end{equation}
This is proved by repeating the standard computation on $\{T < t < T'\}$ and sending $T' \to \infty$. Also, (\ref{e:ibp2}) again holds with $u$ replaced by $u - \lambda$ for any constant $\lambda \in \R$; we take $\lambda$ to be the average of $u$ over $\{t = T\}$. 
Defining $Q_T$ to be the quantity on the left-hand side of (\ref{e:ibp2}), this yields
$$
Q_T \leq Ce^{-\epsilon T} + C \int_{t = T} |u - \lambda| |\nabla u| \leq C e^{-\epsilon T} + C \int_{t = T} |\nabla u|^2 \leq Ce^{-\epsilon T} - C \frac{dQ_T}{dT},
$$
where we have used our $C^2$ a priori estimate from Steps 1 and 2, Cauchy-Schwarz, and the Poincar{\'e} inequality. It is elementary to deduce from this that $Q_T \leq C e^{-\epsilon' T}$ for some uniform $\epsilon' \in (0,\epsilon]$.

Now define $A_T \equiv \{T < t < T+1\}$ and let $u_T$ denote the average of $u$ on $A_T$. Then our estimate for $Q_T$ and the Poincar{\'e} inequality imply that $\|u - u_T\|_{L^2(A_T)} \leq Ce^{-\epsilon'T}$. On the other hand, simply by rewriting the Monge-Amp{\`e}re equation, we have 
\[
\mathcal{L}(u - u_T) = e^f - 1 = O(e^{-\epsilon T}) \ \text{on\ }  A_T, 
\]
where the linear operator $\mathcal{L}$ is defined by
\begin{equation}\label{e:frozen_cma}
(\mathcal{L}v)\omega^n = i\partial\bar{\partial}v \wedge (\omega^{n-1} + \omega^{n-2} \wedge \omega_u + \cdots + \omega_u^{n-1})
\end{equation}
as in \cite[p.~137]{kovalev03}. Since $\mathcal{L}$ is uniformly elliptic with respect to $g$ by Step 2, Moser iteration now tells us that $|u - u_T| \leq Ce^{-\epsilon'T}$ on a slightly smaller domain; see \cite[Thm 4.1]{hanlin} for this type of estimate. 
Then Schauder theory gives $|\nabla^k u| \leq C_k e^{-\epsilon' t}$ for all $k > 0$. Thus, eventually, $|u| \leq C e^{-\epsilon' t}$ for some uniform constant $C$, by integrating the exponentially decaying bound on $\nabla u$ along rays.

\subsection*{Step 4: $C^\infty_{\epsilon'}$ implies $C^{\infty}_\epsilon$} We are assuming that $u \in C^\infty_\epsilon(M)$ with ineffective bounds,
and Step 3 yields $u \in C^\infty_{\epsilon'}(M)$ with effective bounds
for some uniform $\epsilon' \in (0,\epsilon]$.
To upgrade from $\epsilon'$ to $\epsilon$ in the effective bounds,
we first rewrite the complex Monge-Amp{\`e}re equation as
\begin{equation}\label{e:bs}
\frac{1}{2}\Delta_g u = (e^f - 1) - \mathcal{Q}(u), \;\, \mathcal{Q}(u)\omega^n = {n \choose 2}(i\partial\bar{\partial}u)^2 \wedge \omega^{n-2} + \cdots + (i\partial\bar{\partial}u)^n.  
\end{equation}
If $u \in C^\infty_\delta(M)$ with $\delta \in (0,\epsilon]$, then the right-hand side of the PDE in (\ref{e:bs}) lies in $C^\infty_{\delta'}(M)_{0,g}$, $\delta' = \min\{2\delta,\epsilon\}$, so that Proposition \ref{p:invert_laplacian} yields $u \in C^\infty_{\delta'}(M)$, effective estimates understood throughout. We then  put $\delta = \epsilon'$ and iterate a bounded number of times to obtain the desired conclusion. \end{proof}

\begin{remark}
Let us quickly review how we used that $\int_M (e^f - 1)\omega^n = 0$. Unlike in \cite[Lemma 3.4]{tianyau90}, this played no direct role in the nonlinear estimates. However, we needed to drop boundary terms at infinity in (\ref{e:ibp}) and (\ref{e:ibp2}). This was possible because we were working in a space of functions with exponential decay, which the linear analysis allowed us to do because $\int_M (e^f-1)\omega^n = 0$. 
\end{remark}

\subsection{Uniqueness}\label{s:uniqueness}
Finally, let us explain why the Ricci-flat ACyl metric produced by 
Theorem \ref{t:geom-exi} is unique among metrics that are ACyl with respect to the same diffeomorphism 
$\Phi$. This follows from Hodge theory arguments as in Section \ref{s:acyl_analysis}.

\begin{proof}[Proof of Theorem \ref{t:uniqueness}]
First we deduce an \acyl $i\partial\db$-lemma, showing that the exact
decaying $(1,1)$-form $\omega = \omega_2 - \omega_1$ can be written as
$i\partial\db u$ for some function $u$ of linear growth.

Since $\omega$ is exact and decaying, it can according to
\cite[Thm 2.3.27]{jnthesis} be written as $\omega = d\alpha$, where 
$\alpha$ is asymptotic to a translation-invariant harmonic 1-form on $M_\infty$. In
particular, $\bar{\partial}^*\alpha^{0,1}$ is a decaying function and can
therefore be written as $\bar{\partial}^*\bar{\partial}\gamma$ for a function $\gamma$ of linear
growth. The form $\db \gamma - \alpha^{0,1}$ is bounded harmonic, hence closed.
Thus, if we set $u = 2 \im\gamma$, then $i\partial\db u = \partial\alpha^{0,1} + \bar{\partial}\alpha^{1,0} = \omega$.

Now $\omega_1^n = \omega_2^n$ implies that $\mathcal{L}u = 0$, where
$\mathcal{L}v = i\partial\db v \wedge \eta$ with
\[ \eta = \omega_1^{n-1} + \omega_1^{n-2} \wedge \omega_2
+ \cdots + \omega_2^{n-1} \]
as in (\ref{e:frozen_cma}).
The $(n{-}1, n{-}1)$-form $\eta$ is positive in the sense that
$\eta \wedge i\alpha \wedge \bar\alpha > 0$ for every nonzero $(1,0)$-form $\alpha$. It follows that there is a Hermitian metric $\omega$ such that
$\omega^{n-1} = \eta$. This is not typically Kähler, but the
``balanced'' condition that $d\omega^{n-1} = 0$ implies that $\mathcal{L}$ is exactly the
Laplacian with respect to the Riemannian metric associated with $\omega$.
Since any subexponentially growing harmonic function $h$ defines a direction in the cokernel of the Laplacian on exponentially decaying functions (because $\int (\Delta v)h = 0$ if $v$ is decaying), and since this cokernel is $1$-dimensional by
Proposition \ref{p:invert_laplacian}, the only
subexponential harmonic functions are the constants. Hence $u$ is a constant. 
\end{proof} 

\appendix

\section{Divisors with trivial normal bundle}\label{app:trivial_nb}

Let $D$ be a smooth compact divisor in some complex manifold and $U$ a tubular neighbourhood of $D$ that we are free to shrink as needed. We wish to discuss various ``product-like'' conditions for $U$. Let $N$ denote the normal bundle to $D$ in $U$, $\Delta$ the unit disk in $\C$ with standard coordinate $w$, $J$ the complex structure on $U$, and $J_0$ the product complex structure on $\Delta \times D$.

\begin{obs}\label{obs:complex_trivial}
$N$ is trivial as a complex line bundle if and only if there exists a diffeomorphism $\Psi: \Delta \times D \to U$ with $\Psi(0,x) = x$ for all $x \in D$ such that $\Psi^*J - J_0 = 0$ along $\{0\} \times D$. In particular, viewing $w$ as a defining function for $D$ in $U$, we have that $\bar{\partial}w = O(|w|)$.
\end{obs}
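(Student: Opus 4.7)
The plan is to prove the two directions of the equivalence separately, and then extract the statement about $\bar\partial w$ as a direct byproduct of the second half.

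For the (easy) reverse direction, I would argue that given such a $\Psi$, the differential $d\Psi_{(0,x)} : T_0 \Delta \oplus T_x D \to T_x U$ is a complex-linear isomorphism by the condition $\Psi^*J = J_0$ along $\{0\} \times D$. Quotienting by $T_x D$ on both sides, this produces a complex-linear bundle isomorphism between the pullback $p^* T_0\Delta \to D$ (which is tautologically trivial) and $N$. So $N$ is trivial as a complex line bundle.

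For the forward direction, I would build $\Psi$ out of two ingredients. Choose any Hermitian metric $g$ on a tubular neighbourhood of $D$; its Riemannian exponential map gives a smooth diffeomorphism $\exp: N_\epsilon \to U$ (after shrinking $U$), where $N_\epsilon$ is a disk subbundle of $N$ and where on $D$ itself $\exp$ identifies $N$ with the $g$-orthogonal complement of $TD$ in $TU|_D$. Since $g$ is Hermitian, this complement is $J$-invariant, so $d(\exp)$ restricted to $\{0\text{-section}\}$ is $\C$-linear. Next, pick a trivialisation $N \cong \C \times D$ of $N$ as a complex line bundle (this is where the hypothesis enters). Composing the inclusion $\Delta \times D \hookrightarrow \C \times D \cong N_\epsilon$ with $\exp$ gives $\Psi$ (again shrinking $\Delta$ as needed), and by construction $d\Psi$ is $\C$-linear at every point of $\{0\} \times D$, which is equivalent to $\Psi^*J = J_0$ along $\{0\} \times D$. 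I do not expect any real obstacle here: the only point that requires a moment's thought is the Hermitian compatibility, which is why I use the exponential map of a Hermitian rather than arbitrary metric.

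For the ``in particular'' statement, I work on $\Delta \times D$ with the pulled-back complex structure $\tilde J \equiv \Psi^* J$; then the statement ``$w$ is a defining function with $\bar\partial w = O(|w|)$'' on $U$ becomes ``$\bar\partial_{\tilde J} w = O(|w|)$'' on $\Delta \times D$. Using the formula $\bar\partial_{\tilde J} w = \tfrac{1}{2}(dw + i\, dw \circ \tilde J)$ and subtracting the identically vanishing $\bar\partial_{J_0} w = \tfrac{1}{2}(dw + i\, dw \circ J_0)$, one gets $\bar\partial_{\tilde J} w = \tfrac{i}{2}\, dw \circ (\tilde J - J_0)$. Since $\tilde J - J_0 = 0$ along $\{0\} \times D$, the error tensor $\tilde J - J_0$ is $O(|w|)$ in any reasonable norm, and hence $\bar\partial_{\tilde J} w = O(|w|)$. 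No further work is needed.
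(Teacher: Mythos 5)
Your proposal is correct and follows essentially the same route as the paper: the paper's own justification is the one-line observation that $\Psi_*\partial_w|_D$ gives a section of $T^{1,0}U|_D$ complementing $T^{1,0}D$ (your reverse direction), it leaves the converse implicit as the standard Hermitian tubular-neighbourhood construction you spell out, and your identity $\bar\partial_{\tilde J}w=\tfrac{i}{2}\,dw\circ(\tilde J-J_0)$ is exactly the formula the paper uses for this estimate elsewhere. No issues.
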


Indeed, given $\Psi$, the restriction of $\Psi_*\partial_w$ to $D$ defines a section of $T^{1,0}U|_D$ complementing $T^{1,0}D$, hence a trivialisation of $N$ as a smooth complex line bundle. There is significant freedom in choosing such diffeomorphisms $\Psi$, and the next observation provides a very useful normalisation.

\begin{obs}\label{obs:holo_disks}
In {\ref{obs:complex_trivial}} we can arrange that $\Psi^*J - J_0 = 0$ on the horizontal subbundle $T\Delta$ of the tangent bundle $T(\Delta \times D)$ without changing the vector field $\Psi_*\partial_w|_D$.
\end{obs}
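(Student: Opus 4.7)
The strategy is to reduce to constructing a diffeomorphism $\Phi$ from a neighbourhood of $\{0\} \times D$ in $\Delta \times D$ onto another such neighbourhood, satisfying three properties: $\Phi$ fixes $\{0\} \times D$ pointwise, $d\Phi|_{(0,x)}$ equals the identity on $T_{(0,x)}(\Delta \times D)$ for every $x \in D$, and each horizontal slice is mapped into a $\Psi^*J$-holomorphic curve. Granted such $\Phi$, the composite $\tilde\Psi \equiv \Psi \circ \Phi$, after shrinking $\Delta$ so that $\Delta \times D$ sits inside the domain of $\Phi$, is a new diffeomorphism as in \ref{obs:complex_trivial}: the identity differential along $\{0\} \times D$ gives $\tilde\Psi_*\partial_w|_D = \Psi_*\partial_w|_D$ and preserves $\tilde\Psi^*J = J_0$ along $\{0\} \times D$, while the horizontal slices being mapped to $\Psi^*J$-holomorphic curves is exactly the statement that $\tilde\Psi^*J = J_0$ on the horizontal subbundle $T\Delta$ (note $d\tilde\Psi|_{(w_0, x_0)}$ is $\mathbb{C}$-linear on $T_{(w_0, x_0)}\Delta$ because $(d\phi_x)_0(\partial_w) = \partial_w$ is of $\Psi^*J$-type $(1,0)$).

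For each $x \in D$ I would produce a $\Psi^*J$-holomorphic embedding $\phi_x : (\Delta_\rho, 0) \to (\Delta \times D, (0,x))$, for a uniform small $\rho > 0$, with $(d\phi_x)_0(\partial_w) = \partial_w \in T_{(0,x)}(\Delta \times D)$, and set $\Phi(w,x) \equiv \phi_x(w)$. Existence of each individual $\phi_x$ is immediate from the Newlander-Nirenberg theorem applied to the integrable structure $\Psi^*J$ near $(0,x)$: since $\Psi^*J = J_0$ along $\{0\} \times D$, this submanifold is a $\Psi^*J$-complex hypersurface, so there exist local $\Psi^*J$-holomorphic coordinates $(z_1, \dots, z_n)$ centred at $(0,x)$ in which $\{0\} \times D$ is cut out by $z_1 = 0$. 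Because $\partial_w|_{(0,x)}$ is of $\Psi^*J$-type $(1,0)$ and transverse to this hypersurface, an affine change of $z_2, \dots, z_n$ makes the complex line $\{z_2 = \cdots = z_n = 0\}$ tangent to $\partial_w|_{(0,x)}$ at the origin, and a linear rescaling of $z_1$ yields the desired parametrisation $\phi_x$.

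The main technical point---and the primary obstacle---is smooth dependence of $\phi_x$ on $x$. I would establish it by a parametric implicit function theorem argument, bypassing the need to track the $x$-dependence of the (non-canonical) Newlander-Nirenberg charts. Writing a candidate disk in a local trivialisation of $T(\Delta \times D)$ as $\phi_x(w) = (w,x) + v(w)$ with the normalisations $v(0) = 0$ and $(dv/dw)(0) = 0$, the equation $\bar\partial_{\Psi^*J}\phi_x = 0$ becomes a nonlinear first-order elliptic equation in $v$ whose defect at $v = 0$ is $\bar\partial_{\Psi^*J} f_{0,x} = O(|w|)$ on $\Delta_\rho$, since $\Psi^*J - J_0$ vanishes along $\{0\} \times D$; this is uniformly $O(\rho)$ in $x$. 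The linearisation at $v = 0$ is a zero-order perturbation of $\bar\partial_{J_0}$ whose perturbing term is also $O(\rho)$, and one can pick a bounded right inverse to $\bar\partial_{J_0}$ compatible with the two normalisation conditions (e.g.\ a suitable modification of the Cauchy-Green operator that lands in functions vanishing to second order at $0$). The parametric IFT in $x \in D$ then delivers a smooth family $v_x$ for $\rho$ sufficiently small. Setting $\Phi(w,x) \equiv (w, x) + v_x(w)$, the differential $d\Phi|_{(0,x)}$ is the identity by the normalisations, and uniform application of the inverse function theorem over compact $D$ shows that $\Phi$ is a diffeomorphism onto its image after a final shrinking of the disk.
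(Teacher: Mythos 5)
Your proposal is correct and takes essentially the same route as the paper, which proves this observation by the method of Section \ref{s:hol_cptf}, Step 1: replace each reference slice by a nearby $J$-holomorphic disk obtained by solving the perturbed $\bar{\partial}$-equation with a normalised right inverse of the linearisation via a contraction/implicit-function argument, arranging smooth dependence on $x$ through a smoothly varying choice of right inverse. The only (cosmetic) difference is that you work on the compact disk with a jet-corrected Cauchy-Green operator, whereas the paper works in the punctured-disk/cylinder picture with the weighted right inverses of Remark \ref{r:right_inverse}.
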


In particular, the disks $\Psi(\Delta \times \{x\})$ will be holomorphic. This is proved as in Section \ref{s:hol_cptf}, Step 1. With a more careful choice of a right inverse to the $\bar\partial$-operator, one could in fact not only prescribe the tangent vectors of these holomorphic disks at $w = 0$ but their full Taylor expansions.

We require the following application of \ref{obs:holo_disks} in Section \ref{s:reduction}, Part 2.

\begin{obs}\label{obs:holo_trivial}
$N$ is trivial as a holomorphic line bundle if and only if there exists $\Psi$ as in \ref{obs:holo_disks} such that the $T^*D \otimes T\Delta$ component of $\Psi^*J - J_0$ is $O(|w|^{2})$. In particular, denoting this component by $K$, we have that $\bar{\partial}w = \frac{i}{2} dw \circ \Psi_*K = O(|w|^{2})$.
\end{obs}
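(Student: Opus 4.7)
The plan is to translate both sides into statements about the defining function $w$, using the identity $\bar{\partial} w = \frac{i}{2}\,dw \circ K$ already recorded in the statement. With $\Psi$ as in \ref{obs:holo_disks}, $\Psi^*J - J_0$ has no component on $T\Delta$, so $\bar\partial w$ vanishes on horizontal vectors by the standard calculation. On a vertical vector $X \in TD$, $dw(X) = 0$, and writing $\Psi^*J X = J_0 X + KX + LX$ with $L$ taking values in $TD$, only the $K$-contribution survives: $\bar{\partial} w(X) = \frac{1}{2}(dw(X) + i\,dw(\Psi^*J X)) = \frac{i}{2}dw(KX)$. Since $dw$ is $\C$-linear and nowhere vanishing on the rank-one bundle $T\Delta$ where $K$ takes values, the two conditions $K = O(|w|^2)$ and $\bar\partial w = O(|w|^2)$ are equivalent, and I will analyse each direction in those terms.

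For the $(\Leftarrow)$ direction, I would show that $\bar\partial w = O(|w|^2)$ implies $dw|_D$ is a nowhere-vanishing holomorphic section of $N^*$, which is exactly holomorphic triviality of $N$. Around each point $p \in D$, Newlander-Nirenberg supplies a local holomorphic defining function $\zeta$ for $D$, and smooth Hadamard division yields $w = \zeta u$ with $u$ smooth and nowhere zero near $p$. Since $\zeta$ is $J$-holomorphic, $\bar\partial w = \zeta\,\bar\partial u$, so the hypothesis forces $\bar\partial u = O(|\zeta|)$; in particular $\bar\partial u|_D = 0$, whose projection to the $\overline{T^*D}$-component is $\bar\partial_D(u|_D) = 0$, so $u|_D$ is holomorphic on $D$. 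Therefore $dw|_D = (u|_D)\,d\zeta|_D$ is locally the product of a holomorphic function and a local holomorphic frame of $N^*$, yielding a global nowhere-vanishing holomorphic section.

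For the $(\Rightarrow)$ direction, fix a nowhere-vanishing holomorphic section $\sigma$ of $N^*$ and choose a cover $\{V_\alpha\}$ of $D$ together with local holomorphic defining functions $\zeta_\alpha$ on neighbourhoods of $V_\alpha$ satisfying $d\zeta_\alpha|_D = \sigma|_{V_\alpha \cap D}$. On overlaps, $\zeta_\alpha$ and $\zeta_\beta$ differ by a holomorphic unit that equals $1$ along $D$, so $\zeta_\alpha - \zeta_\beta = O(|\zeta_\beta|^2)$. Using a smooth partition of unity $\{\chi_\alpha\}$ subordinate to $\{V_\alpha\}$, put $\tilde{w} = \sum_\alpha \chi_\alpha \zeta_\alpha$. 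This is a global smooth defining function with $d\tilde{w}|_D = \sigma$, and $\bar\partial\tilde{w} = \sum_\alpha (\bar\partial\chi_\alpha)\zeta_\alpha$; fixing a local reference $\zeta_\beta$ and using $\sum \bar\partial\chi_\alpha = 0$ gives $\bar\partial \tilde{w} = \sum_\alpha (\bar\partial\chi_\alpha)\cdot O(|\zeta_\beta|^2) = O(|\tilde{w}|^2)$. It remains to produce $\Psi$ pulling $\tilde{w}$ back to the standard coordinate on $\Delta$: start with any $\Psi_0$ as in \ref{obs:holo_disks} and let $W = \Psi_0^*\tilde{w}$, which on each $\Psi_0^*J$-holomorphic slice $\Delta\times\{x\}$ is genuinely holomorphic with a simple zero at the origin, hence factors as $W(w, x) = w\,v(w, x)$ with $v(\cdot, x)$ holomorphic and nonvanishing. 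Let $\phi_x: \Delta \to \Delta$ be the fibrewise holomorphic inverse of $W(\cdot, x)$ (shrinking $\Delta$ if necessary) and define $\Psi(w, x) = \Psi_0(\phi_x(w), x)$. Each $\phi_x$ being holomorphic, the slices $\Delta \times \{x\}$ remain $\Psi^*J$-holomorphic, while $\Psi^*\tilde{w}$ becomes the standard coordinate on $\Delta$; the identity in the first paragraph then yields $K = O(|w|^2)$.

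The main obstacle is the construction of the almost-holomorphic global defining function $\tilde{w}$ in the $(\Rightarrow)$ direction: one needs the second-order matching $\zeta_\alpha - \zeta_\beta = O(|\zeta_\beta|^2)$ of local holomorphic defining functions to cooperate with the derivatives landing on the partition of unity. The crucial simplification is that one does not need $\tilde{w}$ holomorphic---only $\bar\partial\tilde{w} = O(|\tilde{w}|^2)$---and this second-order approximation is exactly what smooth patching delivers. Once $\tilde{w}$ is in hand, the disk-wise holomorphic reparametrization $\phi_x$ is elementary, and the $(\Leftarrow)$ direction reduces to a pointwise local calculation.
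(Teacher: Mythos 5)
Your argument is essentially correct, and the first two paragraphs track the paper closely: the identity $\bar\partial w=\tfrac{i}{2}\,dw\circ\Psi_*K$ and the ($\Leftarrow$) direction are a dualised version of the paper's computation (the paper expands a local holomorphic defining function $z=\sum z_jw^j$ along the holomorphic disks and identifies holomorphy of the induced section $\Psi_*\partial_w|_D$ of $N$ with holomorphy of $z_1$ and hence with $\bar\partial w=O(|w|^2)$; you work instead with $dw|_D$ as a section of $N^*$ via $w=\zeta u$). Your ($\Rightarrow$) direction is genuinely different and longer: the paper simply uses the freedom in \ref{obs:holo_disks} to choose $\Psi$ whose normal vector field $\Psi_*\partial_w|_D$ realises a given \emph{holomorphic} trivialisation of $N$, and then runs the same power-series computation backwards; you instead first build a global almost-holomorphic defining function $\tilde w$ by patching local holomorphic ones (this is precisely the paper's proof of \ref{obs:holo_trivial_m} in the case $m=1$) and then reparametrise fibrewise. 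Your route buys an explicit global defining function with $\bar\partial\tilde w=O(|\tilde w|^2)$, at the cost of an extra normalisation step.

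One step needs repair as written. You assert that $W=\Psi_0^*\tilde w$ is \emph{genuinely} holomorphic on each slice $\Delta\times\{x\}$, but $\tilde w=\sum\chi_\alpha\zeta_\alpha$ is not holomorphic---only $\bar\partial\tilde w=O(|\tilde w|^2)$---so a priori its restriction to a $J$-holomorphic disk satisfies $\bar\partial_{\mathrm{slice}}W(\cdot,x)=O(|w|^2)$ and no better, and then the ``fibrewise holomorphic inverse'' $\phi_x$ does not exist; taking a merely smooth inverse would destroy the holomorphic parametrisation of the slices and hence the property that $\Psi$ is as in \ref{obs:holo_disks}. The fix is to reverse your order of construction: fix $\Psi_0$ first and extend the partition of unity $\chi_\alpha$ from $D$ to $U$ by pulling back along the $\Psi_0$-product projection, so that each $\chi_\alpha$ is constant on the holomorphic disks. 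Then $W(\cdot,x)=\sum_\alpha\chi_\alpha(x)\,(\zeta_\alpha\circ\Psi_0)(\cdot,x)$ is a constant-coefficient combination of holomorphic functions on each slice, with a simple zero at the origin because $\partial_wW(0,x)=\sigma(\Psi_{0*}\partial_w|_x)\neq 0$, and the rest of your reparametrisation argument (including the verification that $d\Psi$ remains $\C$-linear along $D$, using $\phi_x(0)=0$) goes through.
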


\begin{proof}
As in Section \ref{s:hol_cptf},  Step 3, it suffices to show that if we have $\Psi$ as in \ref{obs:holo_disks}, then $\Psi_*\partial_w|_D$ induces a \emph{holomorphic} trivialising section of $N$ if and only if $\bar{\partial}w = O(|w|^2)$. Now the former is equivalent to $\frac{\partial z}{\partial w}$ being holomorphic on $D$ for every local holomorphic defining function $z$ of $D$. 
Restricting $z$ to the holomorphic disks $\Psi(\Delta \times \{x\})$ we obtain a power series expansion  $z= \sum_{j=1}^\infty z_j w^j$, where the $z_j$ are
smooth locally defined functions on $D$ and $z_1$ never vanishes. Applying $\bar{\partial}$ to this identity
quickly shows 
that $\bar{\partial}w = O(|w|^2)$ if and only if $z_1$ is holomorphic on $D$, as desired.
\end{proof}

Let $\mathcal{J}_D$ denote the ideal sheaf of $D$ in $\mathcal{O}_U$. Given $m \in \N$, the $(m-1)$st infinitesimal neighbourhood $mD$ of $D$ in $U$ is defined as the analytic space $(D, \mathcal{O}_U/\mathcal{J}_D^m)$. The following partial extension of \ref{obs:holo_trivial} to higher orders may be useful to keep in mind in Section \ref{s:cptfy_prelim}.

\begin{obs}\label{obs:holo_trivial_m}
If $\mathcal{O}_{mD}(D)$ is trivial as a holomorphic line bundle, then there exists a smooth
defining function $w: U \to \Delta$ for $D$ such that $\bar{\partial}w = O(|w|^{m+1})$.
\end{obs}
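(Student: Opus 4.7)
The plan is to reformulate the conclusion as the existence of a suitable smooth nowhere-vanishing section of $\calo_U(-D)$, and then to produce such a section from the hypothesised holomorphic trivialisation by smoothing out a \v{C}ech representative. A smooth defining function $w$ for $D$ in $U$ corresponds bijectively to a smooth nowhere-vanishing section of $\calo_U(-D)$: in a local holomorphic chart $(U_i, z_1^{(i)}, \dots, z_n^{(i)})$ with $D \cap U_i = \{z_1^{(i)} = 0\}$, one writes $w = F_i z_1^{(i)}$ with $F_i$ smooth and nowhere-zero, and $\{F_i\}$ transforms as a section of $\calo_U(-D)$. Since $\bar\partial z_1^{(i)} = 0$, the bound $\bar\partial w = O(|w|^{m+1})$ is equivalent to $\bar\partial F_i = O(|z_1^{(i)}|^m)$ on each $U_i$.

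The first step will be to extract holomorphic local data from the hypothesis. Triviality of $\calo_{mD}(D)$ is equivalent to triviality of its dual $\calo_{mD}(-D)$, which on a good cover $\{U_i\}$ of a tubular neighbourhood of $D$ provides holomorphic functions $\hat b_i \in \calo(U_i)$, nowhere-vanishing on $D$, with
\[
\epsilon_{ij} := \hat b_j - t_{ij}\hat b_i \in \mathcal{J}_D^m \cdot \calo(U_i \cap U_j),
\]
where the $t_{ij}$ are the holomorphic transitions of $\calo_U(-D)$ specified by $z_1^{(i)} = t_{ij} z_1^{(j)}$ and $\mathcal{J}_D$ is the holomorphic ideal sheaf of $D$. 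A short computation using the cocycle $t_{ij} t_{ki} = t_{kj}$ yields the twisted \v{C}ech identity $\epsilon_{kj} - t_{ij}\epsilon_{ki} = \epsilon_{ij}$ on triple intersections.

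The main---and essentially only---step is to smooth out this cocycle without losing the order of vanishing along $D$. Given a partition of unity $\{\rho_k\}$ subordinate to $\{U_k\}$, the plan is to set $\delta_i := \sum_k \rho_k \epsilon_{ki}$ on $U_i$; the twisted cocycle identity then gives $\delta_j - t_{ij}\delta_i = \epsilon_{ij}$, so that $F_i := \hat b_i - \delta_i$ satisfies $F_j = t_{ij} F_i$ and glues into a smooth global section $F$ of $\calo_U(-D)$. After shrinking $U$ this $F$ is nowhere-vanishing, because $\delta_i|_D = 0$ while $\hat b_i|_D \neq 0$. The decisive observation is that each $\epsilon_{ki}$ is a \emph{holomorphic} function divisible by $(z_1^{(k)})^m = (t_{ki})^m (z_1^{(i)})^m$, which forces $\delta_i = (z_1^{(i)})^m g_i$ for some smooth $g_i$ on $U_i$. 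Consequently $\bar\partial F_i = -\bar\partial \delta_i = -(z_1^{(i)})^m \bar\partial g_i = O(|z_1^{(i)}|^m)$, and the associated defining function $w$ satisfies $\bar\partial w = O(|w|^{m+1})$. The only genuine obstacle is precisely this combined bookkeeping---ensuring that the partition-of-unity smoothing fixes the gluing exactly while simultaneously preserving the holomorphic vanishing order along $D$; the rest of the argument is just the unwinding of the correspondence between smooth defining functions and smooth trivialisations of $\calo_U(-D)$.
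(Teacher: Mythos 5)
Your proof is correct and is essentially the paper's argument in \v{C}ech clothing: unwinding your construction, $w = F_i z_1^{(i)} = \sum_k \rho_k \bigl(\hat b_k z_1^{(k)}\bigr)$ is exactly a partition-of-unity average of local holomorphic defining functions whose pairwise differences lie in $\mathcal{J}_D^{m+1}$, which is what the paper writes down directly (obtaining those local defining functions as reciprocals of local meromorphic sections of $\mathcal{O}_U(D)$ lifting the trivialisation of $\mathcal{O}_{mD}(D)$). The order bookkeeping and the gluing in your version both check out, so the two arguments coincide up to whether one works with $\mathcal{O}(D)$ or its dual.
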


\begin{proof}
The exact sequence $0 \to \mathcal{J}_D^{m-1} \to \mathcal{O}_U(D) \to \mathcal{O}_{mD}(D) \to 0$ tells us that $\mathcal{O}_{mD}(D)$ is trivial if and only if there exists a finite cover of $U$ by open sets $U_j$ together with meromorphic functions $z_j$ such that ${\rm div}(z_j) = -(D \cap U_j) $ and $z_j - z_k \in \mathcal{J}_D^{m-1}(U_j \cap U_k)$ for all $j,k$.
Fix a partition of unity $\chi_j$ subordinate to this open cover and define $w \equiv \sum \chi_j w_j$, where each $w_j \equiv \frac{1}{z_j}$ is a local holomorphic defining function for $D$ in $U_j$. We need to check that $w$ does not vanish in $U$ except on $D$, and that $\bar\partial w = O(|w|^{m+1})$; both properties follow easily from the fact that $w_j - w_k \in \mathcal{J}_D^{m+1}(U_j \cap U_k)$.
\end{proof}

The limiting case of \ref{obs:holo_trivial_m} as $m \to \infty$ amounts to

\begin{obs}\label{obs:fibred}
$\mathcal{O}_U(D)$ is holomorphically trivial if and only if there is a holomorphic defining function $w: U \to \Delta$ for $D$. This is the case if and only if $U$ is fibred by the linear system $|D|$. 
\end{obs}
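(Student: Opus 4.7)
\medskip

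\noindent\emph{Proof plan.} The statement contains two biconditionals; the second is essentially a reformulation of the first in geometric language, so the only substantive assertion is the first one. My plan is to prove it using the canonical meromorphic section of $\mathcal{O}_U(D)$. If $\mathcal{O}_U(D)$ is holomorphically trivial, pick a nowhere-vanishing holomorphic section $\sigma$ and consider the ratio $w \equiv s_D/\sigma$, where $s_D$ is the canonical meromorphic section whose divisor is $D$. Since $\sigma$ is nowhere vanishing and $s_D$ vanishes transversally along $D$ and nowhere else, $w$ is a global holomorphic function on $U$ that cuts out $D$ to first order. After shrinking $U$ to a tubular neighbourhood of $D$ on which $|w| < 1$, we obtain a holomorphic defining function $w: U \to \Delta$. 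Conversely, any holomorphic defining function $w$ yields the nowhere-vanishing meromorphic section $1/w$ of $\mathcal{O}_U(D)$, trivialising it.

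For the second biconditional, the content is almost tautological once the terminology is unpacked. A holomorphic defining function $w: U \to \Delta$ \emph{is} a fibration of $U$; its fibres $\{w = c\}$ for $c \in \Delta$ are effective divisors, and they all lie in $|D|$ because the functions $w - c$ are holomorphic sections of $\mathcal{O}_U(D)$ under the trivialisation above, so their zero divisors are linearly equivalent to $D$. Conversely, ``$U$ is fibred by $|D|$'' means that there is a base-point-free pencil inside $|D|$ containing $D$; the associated morphism $\pi: U \to \mathbb{P}^1$ sends $D$ to a single point and is a submersion along $D$ (since the fibre over that point is the smooth reduced divisor $D$). Composing $\pi$ with a local coordinate on $\mathbb{P}^1$ centred at $\pi(D)$ and restricting to the preimage of $\Delta$ gives the holomorphic defining function.

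\noindent\emph{Main obstacle.} There is no serious analytic difficulty here; the only points requiring slight care are the need to shrink $U$ at each step to realise the required bounds, and the verification that the pencil in the second converse direction is base-point-free (which is automatic because the member $D$ is smooth and reduced, so $\pi$ is a submersion in a neighbourhood of $D$). One might also note in passing that this observation is the formal ``$m = \infty$'' limit of Observation~\ref{obs:holo_trivial_m}: triviality of $\mathcal{O}_{mD}(D)$ for every $m$ would give local holomorphic defining functions agreeing modulo arbitrarily high powers of the ideal sheaf, but the global existence of $w$ here comes directly from the single line bundle $\mathcal{O}_U(D)$ being trivial, not from a limiting argument.
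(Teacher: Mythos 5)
Your argument is correct, and it is exactly the (unwritten) argument the paper has in mind: the paper states this observation without proof as the formal $m\to\infty$ limit of Observation~\ref{obs:holo_trivial_m}, where the only issue was that infinitesimal triviality forces a partition-of-unity patching of the local functions $z_j$ with ${\rm div}(z_j)=-(D\cap U_j)$, whereas genuine triviality of $\mathcal{O}_U(D)$ hands you a single global such meromorphic function whose reciprocal is your $w$. Your handling of the second biconditional (reducedness of the fibre $D$ forcing $\pi$ to be a submersion along $D$, hence $w=$ coordinate $\circ\,\pi$ a genuine defining function) is also fine.
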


\begin{remark}
By standard results in deformation theory, the linear system $|D|$ will certainly define a fibration of $U$ whenever
$N = \mathcal{O}_D(D)$ is holomorphically trivial and $h^{0,1}(D) = 0$.
\end{remark}

\begin{remark}
One sometimes encounters a slightly weaker flatness condition than \ref{obs:fibred}: that the real hypersurface $\partial U$ is \emph{Levi-flat}, \ie foliated by complex hypersurfaces of the ambient space.
\end{remark}

To conclude this appendix, we wish to explain on an intuitive level why the existence of an
ACyl Hermitian metric on $U \setminus D$ is equivalent to $N = \mathcal{O}_D(D)$ being trivial as a holomorphic line bundle. More precise results along these lines are proved in Sections \ref{s:hol_cptf} and \ref{s:reduction}.

$\bullet$ Suppose we are given an ACyl Hermitian metric on $U \setminus D$. We assume that the cylindrical end is $\R^+ \times \Sph^1 \times D$ with an ACyl diffeomorphism of the form $(t,\theta,x) \mapsto \Psi(e^{-t-i\theta},x)$ with $\Psi$ as in \ref{obs:holo_disks}. Using the ACyl metric, we can see that the purely vertical ($\wedge^2T^*D$) components of  $i\partial\bar{\partial} \log |w|$ must 
vanish as $w \to 0$. On the other hand, writing $K$ as in \ref{obs:holo_trivial}, we have $i\partial\bar{\partial}\log |w| = -\frac{1}{2}d({\rm Re}\,\frac{dw}{w} \circ \Psi_*K)$;
since $K$ is a smooth section of $T^*D \otimes T\Delta$, this equation tells us that
$i\partial\bar\partial\log|w|$ has zero horizontal, $O(|w|^{-2}|K| + |w|^{-1}|\partial_h K|  )$ mixed, and $O(|w|^{-1}|\partial_vK|)$ vertical components,
where $\partial_h$ and $\partial_v$ denote horizontal and vertical partials. It is therefore essentially forced on us that $K = O(|w|^2)$.

$\bullet$ Conversely, given $D \subset U$ and a defining function $w$, it is natural to try and construct an ACyl Hermitian metric on $U \setminus D$ by making an ansatz of the form $i\partial\bar{\partial}(\log |w|)^2 + \omega_0$ for some Hermitian metric $\omega_0$ on $U$. With a diffeomorphism $\Psi$ as in \ref{obs:holo_disks}, computations as above show that $K = O(|w|^2)$ then suffices in order for this ansatz to be ACyl with ACyl diffeomorphism $(t,\theta,x) \mapsto \Psi(e^{-t-i\theta},x)$; for instance, the purely vertical components of $i\partial\bar{\partial}(\log |w|)^2$ are $O(|w|^{-1}|{\log |w|}| |\partial_v K|)$.

\section{Error estimates for the nonfibred case of Theorem \ref{t:geom-exi}}\label{s:nonfibred_errors}

In this section we prove the estimates (\ref{result_epsilons}) for the integrals defined in  (\ref{define_epsilons}), using the auxiliary estimates (\ref{aux_epsilons}). We write the domain of integration as a union of two regions that will be treated separately:~the \emph{annulus} $(\Delta(r + 2s) \setminus \Delta(r - 2s)) \times D$ and the \emph{tube} $\Delta(r-2s) \times D$.
In each case, the integrand is a wedge product of $2$-forms with $n$ factors. We decompose each of these $2$-form factors into its horizontal ($\wedge^2T^*\Delta$), mixed ($T^*\Delta \otimes T^*D$), and vertical ($\wedge^2 T^*D$) components, estimates for which can be found in (\ref{aux_epsilons}). In addition to the absolute value bounds of (\ref{aux_epsilons}), we will also make use of the fact that $({\rm\textit{horizontal}}\,)^{\wedge a} \wedge ({\rm\textit{mixed}}\,)^{\wedge b} = 0$ if
$a \geq 2$ or $a = 1, b \geq 1$ or  $b \geq 3$.

Before estimating the errors $\epsilon_{\ell,p}$, let us quickly note the following bounds for the constants $c_0, c_1$ of (\ref{e:nonlinear_volume_form}) and (\ref{result_epsilons}), whose proofs are similar but much less complicated (see also (\ref{e:linear_volume_form})):
\begin{align}
c_0 = \int_M (\tilde\omega_0^n + n\lambda i\partial\bar\partial(\chi u) \wedge \tilde\omega_0^{n-1} - i^{n^2}\Omega\wedge\bar\Omega)\sim -|{\log r}|, \\
c_1 = \int_M ni\partial\bar\partial B \wedge \tilde\omega_0^{n-1} \sim rs.
\end{align}

We subdivide the remaining estimates into three cases. We abbreviate horizontal/mixed/vertical $2$-forms by $h/m/v$ respectively, and $v^p$ refers to a wedge product of $p$ vertical $2$-forms etc.

\subsection{Estimating $\epsilon_{0,p}$ for $p \in \{2,\ldots,n\}$} This is the easiest case because there are no $i\partial\bar\partial B$ factors. We have the following contributions to $\epsilon_{0,p}$, the crosses indicating the dominant ones.\vskip3mm

\begin{center}
\begin{tabular}{l|l|l|l}
\hline\hline
annulus & $v^p$ &  $rs(r|{\log r}|)^{p}$ & \\\hline
& $v^{p-1}m$ & $rs(r |{\log r}|)^{p-1}|{\log r}|$ & \\\hline
&$v^{p-1}h$ & $rs (r |{\log r}|)^{p-1}\frac{|{\log r}|}{r^2}$ & $\times$ \\\hline
& $v^{p-2}m^2$ & $rs(r |{\log r}|)^{p-2}|{\log r}|^2$ & \\\hline\hline
tube & $v^p$ & $\int_0^r \rho(\rho |{\log \rho}|)^{p}  \, d\rho $ & \\\hline
& $v^{p-1}m$ & $\int_0^r \rho(\rho |{\log \rho}|)^{p-1} |{\log \rho}| \, d\rho$ & \\\hline
& $v^{p-1}h$ & $\int_0^r \rho(\rho |{\log \rho}|)^{p-1} \frac{1}{\rho^2}\, d\rho$ & $\times$ \\\hline
& $v^{p-2}m^2$ & $\int_0^r \rho(\rho |{\log \rho}|)^{p-2} |{\log \rho}|^2 \, d\rho$ & \\\hline\hline\end{tabular}
\end{center}
\vskip3mm
It follows immediately that
\begin{equation}
\sum_{p=2}^n \epsilon_{0,p}  = O((r + s|{\log r}|)|{\log r}|).
\end{equation}

\subsection{Estimating $\epsilon_{1,p}$ for  $p \in \{1, \ldots , n-1\}$} The only nonzero contributions to the integrand arise by multiplying a component from the left column of the following table with a component from the right column labelled with the same colour.

\vskip3mm

\begin{center}
\begin{tabular}{l|l}
$i\partial\bar\partial B$ & $(i\partial\bar\partial(\chi u))^p$ \\\hline
$v$ {\color{blue}$\bullet$}  & $v^p$ {\color{blue}$\bullet$} {\color{red}$\bullet$}  {\color{green}$\bullet$}\\
$m$ {\color{red}$\bullet$} & $v^{p-1}m$ {\color{blue}$\bullet$} {\color{red}$\bullet$} \\
$h$ {\color{green}$\bullet$}& $v^{p-1}h$ {\color{blue}$\bullet$}    \\
$\;$ & $v^{p-2}m^2$ $($if $p \geq 2$) {\color{blue}$\bullet$}\\\hline
\end{tabular}
\end{center}

\vskip3mm

\noindent Then $\epsilon_{1,p}$ consists of the following contributions, the cross again indicating the largest one.

\vskip3mm

\begin{center}
\begin{tabular}{l|l|l|l}
\hline\hline
annulus & {\color{blue}$\bullet$} & $rs (r^2s) (r |{\log r}|)^p$ & \\
& & $rs (r^2s) (r |{\log r}|)^{p-1} |{\log r}| $ & \\
& & $rs (r^2s) (r |{\log r}|)^{p-1} \frac{|{\log r}|}{r^2}$ & \\
& & $rs (r^2s) (r |{\log r}|)^{p-2}|{\log r}|^2$ (if $p \geq 2$) & \\\hline
& {\color{red}$\bullet$}  & $rs (r^2)(r |{\log r}|)^p$ &  \\
& & $rs (r^2)(r |{\log r}|)^{p-1} |{\log r}|$ & \\\hline
& {\color{green}$\bullet$}  & $rs (r |{\log r}|)^p$ & $\times$ \\\hline\hline
tube & {\color{blue}$\bullet$}  & $\int_0^r \rho(rs\rho) (\rho|{\log \rho}|)^p \, d\rho$ & \\
& & $\int_0^r \rho(rs\rho) (\rho |{\log \rho}|)^{p-1} |{\log \rho}|\, d\rho$ & \\
& & $\int_0^r \rho(rs\rho)(\rho |{\log \rho}|)^{p-1}\frac{1}{\rho^2} \, d\rho$ & \\
& & $\int_0^r \rho(rs\rho)(\rho |{\log \rho}|)^{p-2}|{\log \rho}|^2\,d\rho$ (if $p \geq 2$) & \\\hline
& {\color{red}$\bullet$}  & $\int_0^r \rho(rs)(\rho |{\log \rho}|)^p\, d\rho$ &\\
& & $\int_0^r \rho(rs)(\rho |{\log \rho}|)^{p-1}|{\log \rho}|\,d\rho$ & \\\hline
& {\color{green}$\bullet$}  & $0$ & \\\hline\hline
\end{tabular}
\end{center}

\vskip3mm

\noindent As an immediate consequence,
\begin{equation}
\sum_{p=1}^{n-1} \epsilon_{1,p} = O( r|{\log r}| rs).
\end{equation}

\subsection{Estimating $\epsilon_{\ell,p}$ for $\ell \in \{2, \ldots, n\}$ and $p \in\{0, \ldots, n-\ell\}$} This step is entirely similar to the previous one, if slightly more complicated, so we only give the tables and the final result.

\vskip3mm

\begin{center}
\begin{tabular}{l|l}
$(i\partial\bar\partial B)^\ell$ & $(i\partial\bar\partial(\chi u))^p$\\\hline

$v^\ell$ {\color{blue}$\bullet$} & $v^p$ {\color{blue}$\bullet$} {\color{red}$\bullet$} {\color{green}$\bullet$} {\color{black}$\bullet$}\\
$v^{\ell-1}m$ {\color{red}$\bullet$} & $v^{p-1}m$ (if $p \geq 1$) {\color{blue}$\bullet$} {\color{red}$\bullet$}\\
$v^{\ell-1}h$ {\color{green}$\bullet$} & $v^{p-1}h$ (if $p \geq 1$) {\color{blue}$\bullet$}\\
$v^{\ell-2}m^2$ $\bullet$ & $v^{p-2}m^2$ (if $p \geq 2$) {\color{blue}$\bullet$}\\\hline
\end{tabular}
\end{center}

\vskip3mm

\begin{center}
\begin{tabular}{l|l|l|l}
\hline\hline
annulus &  {\color{blue}$\bullet$} & $rs (r^2s)^\ell(r|{\log r}|)^p$ &\\
& & $rs(r^2s)^{\ell}(r |{\log r}|)^{p-1}|{\log r}|$ (if $p \geq 1$)&\\
& & $rs(r^2s)^\ell(r |{\log r}|)^{p-1}\frac{|{\log r}|}{r^2}$ (if $p \geq 1$)&\\
& & $rs(r^2s)^\ell(r |{\log r}|)^{p-2}|{\log r}|^2$ (if $p \geq 2$)&\\\hline
&  {\color{red}$\bullet$} & $rs(r^2s)^{\ell-1}r^2(r |{\log r}|)^p$&\\
& & $rs(r^2s)^{\ell-1}r^2(r |{\log r}|)^{p-1}|{\log r}|$ (if $p \geq 1$)&\\\hline
&  {\color{green}$\bullet$} & $rs(r^2s)^{\ell-1} (r |{\log r}|)^p$ & $\times$ \\ \hline
&  {\color{black}$\bullet$} & $rs(r^2s)^{\ell-2} r^4 (r |{\log r}|)^p$ & $\times$ \\\hline\hline
tube &  {\color{blue}$\bullet$} & $\int_0^r \rho(rs\rho)^\ell (\rho |{\log \rho}|)^p \, d\rho$&\\
& & $\int_0^r \rho(rs\rho)^\ell (\rho |{\log \rho}|)^{p-1}|{\log \rho}|\, d\rho$ (if $p \geq 1$)&\\
& & $\int_0^r \rho(rs\rho)^\ell (\rho |{\log \rho}|)^{p-1} \frac{1}{\rho^2}\, d\rho$ (if $p \geq 1$)&\\
& & $\int_0^r \rho(rs\rho)^\ell (\rho |{\log \rho}|)^{p-2} |{\log \rho}|^2\, d\rho$ (if $p \geq 2$)&\\\hline
&  {\color{red}$\bullet$} & $\int_0^r \rho(rs\rho)^{\ell-1} rs (\rho |{\log \rho}|)^p \, d\rho$ &\\
& & $\int_0^r \rho(rs\rho)^{\ell-1} rs (\rho |{\log \rho}|)^{p-1} |{\log \rho}|\, d\rho$ (if $p \geq 1$)&\\\hline
& {\color{green}$\bullet$} & $0$&\\\hline
&  {\color{black}$\bullet$} & $\int_0^r \rho(rs\rho)^{\ell-2}(rs)^2(\rho |{\log \rho}|)^p \, d\rho$& \\\hline\hline
\end{tabular}
\end{center}

\vskip3mm

\begin{equation}
\sum_{p = 0}^{n-\ell} \epsilon_{\ell,p} = O((r^2s)^{\ell-1}(rs + r^3))
\end{equation}

\bibliographystyle{amsplain}
\bibliography{bib}

\providecommand{\bysame}{\leavevmode\hbox to3em{\hrulefill}\thinspace}
\providecommand{\MR}{\relax\ifhmode\unskip\space\fi MR }
% \MRhref is called by the amsart/book/proc definition of \MR.
\providecommand{\MRhref}[2]{%
  \href{http://www.ams.org/mathscinet-getitem?mr=#1}{#2}
}
\providecommand{\href}[2]{#2}
\begin{thebibliography}{10}

\bibitem{atiyah75}
M.F. Atiyah, V.K. Patodi, and I.M. Singer, \emph{Spectral asymmetry and
  {R}iemannian geometry, {I}}, Math. Proc. Camb. Phil. Soc. \textbf{77} (1975),
  97--118.

\bibitem{besse87}
A.L. Besse, \emph{Einstein manifolds}, Springer-Verlag, New York, 1987.

\bibitem{BM}
O.~Biquard and V.~Minerbe, \emph{A {K}ummer construction for gravitational
  instantons}, Comm. Math. Phys. \textbf{308} (2011), 773--794.

\bibitem{blocki}
Z.~B{\l}ocki, \emph{The {C}alabi-{Y}au theorem}, Complex Monge-Amp{\`e}re
  equations and geodesics in the space of K{\"a}hler metrics, Lecture Notes in
  Math., vol. 2038, Springer, 2012, pp.~201--227.

\bibitem{cheeger71}
J.~Cheeger and D.~Gromoll, \emph{The splitting theorem for manifolds of
  non-negative {R}icci curvature}, J. Diff. Geom. \textbf{6} (1971), 119--128.

\bibitem{cg1}
\bysame, \emph{On the structure of complete manifolds of nonnegative
  curvature}, Ann. of Math. (2) \textbf{96} (1972), 413--443.

\bibitem{cheeger:tian}
J.~Cheeger and G.~Tian, \emph{On the cone structure at infinity of {R}icci-flat
  manifolds with {E}uclidean volume growth and quadratic curvature decay},
  Invent. Math. \textbf{118} (1994), 493--571.

\bibitem{AC3}
R.J. Conlon and H.-J. Hein, \emph{Asymptotically conical {C}alabi-{Y}au
  manifolds, {III}}, arXiv:1405.7140.

\bibitem{CMR}
R.J. Conlon, R.~Mazzeo, and F.~Rochon, \emph{The moduli space of asymptotically
  cylindrical {C}alabi-{Y}au manifolds}, arXiv:1408.6562.

\bibitem{chnp1}
A.~Corti, M.~Haskins, J.~Nordström, and T.~Pacini, \emph{Asymptotically cylindrical {C}alabi-{Y}au $3$-folds from weak
  {F}ano $3$-folds}, Geom. Topol. \textbf{17} (2013), 1955--2059.

\bibitem{chnp2}
\bysame, \emph{${G}_2$-manifolds and
  associative submanifolds via semi-{F}ano $3$-folds}, arXiv:1207.4470, to
  appear in Duke Math. J.

\bibitem{fischer}
A.E. Fischer and J.A. Wolf, \emph{The structure of compact {R}icci-flat
  {R}iemannian manifolds}, J. Diff. Geom. \textbf{10} (1975), 277--288.

\bibitem{frantzenthesis}
K.~Frantzen, \emph{${K}3$ surfaces with special symmetry}, Ph.D. thesis,
  Ruhr-Universität Bochum, 2008.

\bibitem{fujiki}
A.~Fujiki, \emph{Finite automorphism groups of complex tori of dimension two},
  Publ. Res. Inst. Math. Sci. \textbf{24} (1988), 1--97.

\bibitem{greb13}
D.~{Greb} and C.~{Lehn}, \emph{Base manifolds for {L}agrangian fibrations on
  hyperk\"ahler manifolds}, IMRN (2014), 5483--5487.

\bibitem{gsc}
A.~Grigoryan and L.~Saloff-Coste, \emph{Stability results for {H}arnack
  inequalities}, Ann. Inst. Fourier \textbf{55} (2005), 825--890.

\bibitem{hanlin}
Q.~Han and F.~Lin, \emph{Elliptic partial differential equations}, Courant
  Lecture Notes in Mathematics, vol.~1, New York University Courant Institute
  of Mathematical Sciences, New York, 1997.

\bibitem{hein1}
H.-J. Hein, \emph{Weighted {S}obolev inequalities under lower {R}icci curvature
  bounds}, Proc. Amer. Math. Soc. \textbf{139} (2011), 2943--2955.

\bibitem{hein}
\bysame, \emph{Gravitational instantons from rational elliptic surfaces}, J.
  Amer. Math. Soc. \textbf{25} (2012), 355--393.

\bibitem{hwang}
J.-M. Hwang, \emph{Base manifolds for fibrations of projective irreducible
  symplectic manifolds}, Invent. Math. \textbf{174} (2008), 625--644.

\bibitem{janich}
K.~J\"anich, \emph{Vector analysis}, Undergraduate Texts in Mathematics,
  Springer-Verlag, New York, 2001.

\bibitem{joyce00}
D.D. Joyce, \emph{Compact manifolds with special holonomy}, OUP Mathematical
  Monographs series, Oxford University Press, 2000.

\bibitem{koz05}
J.~Keum, K.~Oguiso, and D.-Q. Zhang, \emph{The alternating group of degree $6$
  in the geometry of the {L}eech lattice and ${K}3$ surfaces}, Proc. London
  Math. Soc. \textbf{90} (2005), no.~3, 371--394.

\bibitem{kovalev03}
A.G. Kovalev, \emph{Twisted connected sums and special {R}iemannian holonomy},
  J. Reine Angew. Math. \textbf{565} (2003), 125--160.

\bibitem{kovalev05}
\bysame, \emph{Ricci-flat deformations of asymptotically cylindrical
  {C}alabi-{Y}au manifolds}, Proceedings of {G}ökova Geometry-Topology
  Conference 2005, International Press, 2006, pp.~140--156.

\bibitem{kovalev-lee08}
A.G. Kovalev and N.-H. Lee, \emph{{$K3$} surfaces with non-symplectic
  involution and compact irreducible {$G\sb 2$}-manifolds}, Math. Proc. Camb.
  Phil. Soc. \textbf{151} (2011), 193--218.

\bibitem{ChiLi}
C.~Li, \emph{Deformations of complex cones and neighborhoods of ample
  divisors}, arXiv:1405.2433.

\bibitem{lockhart85}
R.B. Lockhart and R.C. McOwen, \emph{Elliptic differential operators on
  noncompact manifolds}, Ann. Scuola Norm. Sup. Pisa Cl. Sci. \textbf{12}
  (1985), 409--447.

\bibitem{matsushita}
D.~Matsushita, \emph{On fibre space structures of a projective irreducible
  symplectic manifold}, Topology \textbf{38} (1999), 79--83.

\bibitem{mazya78}
V.G. Maz'ya and B.A. Plamenevski\u{\i}, \emph{Estimates in ${L}_{p}$ and
  {H}ölder classes and the {M}iranda-{A}gmon maximum principle for solutions
  of elliptic boundary value problems in domains with singular points on the
  boundary}, Math. Nachr. \textbf{81} (1978), 25--82, English translation: {\it
  Amer. Math. Soc. Transl. Ser. 2}, 123:1--56, 1984.

\bibitem{melrose94}
R.~Melrose, \emph{The {A}tiyah-{P}atodi-{S}inger index theorem}, AK Peters,
  Wellesley, MA, 1994.

\bibitem{min}
V.~Minerbe, \emph{Weighted {S}obolev inequalities and {R}icci-flat manifolds},
  Geom. Funct. Anal. \textbf{18} (2009), 1696--1749.

\bibitem{mok}
N.~Mok, \emph{An embedding theorem of complete {K}\"ahler manifolds of positive
  {R}icci curvature onto quasi-projective varieties}, Math. Ann. \textbf{286}
  (1990), 373--408.

\bibitem{mok-zhong}
N.~Mok and J.Q. Zhong, \emph{Compactifying complete {K}\"ahler-{E}instein
  manifolds of finite topological type and bounded curvature}, Ann. of Math.
  (2) \textbf{129} (1989), 427--470.

\bibitem{mukai:finite:k3}
S.~Mukai, \emph{Finite groups of automorphisms of {$K3$} surfaces and the
  {M}athieu group}, Invent. Math. \textbf{94} (1988), 183--221.

\bibitem{nij-woolf}
A.~Nijenhuis and W.~Woolf, \emph{Some integration problems in almost-complex
  and complex manifolds}, Ann. of Math. \textbf{77} (1963), 424--489.

\bibitem{nikulin80}
V.V. Nikulin, \emph{Finite groups of automorphisms of {K}ählerian surfaces of
  type ${K}3$ ({E}nglish translation)}, Moscow Math. Soc. \textbf{38} (1980),
  71--137.

\bibitem{jnthesis}
J.~Nordstr\"om, \emph{Deformations and gluing of asymptotically cylindrical
  manifolds with exceptional holonomy}, Ph.D. thesis, University of Cambridge,
  2008.

\bibitem{jn1}
\bysame, \emph{Deformations of asymptotically cylindrical ${G}_{2}$-manifolds},
  Math. Proc. Camb. Phil. Soc. \textbf{145} (2008), 311--348.

\bibitem{P}
T.~Perutz, \emph{Hamiltonian handleslides for {H}eegaard {F}loer homology},
  Proceedings of G\"okova Geometry-Topology Conference 2007, G\"okova, 2008,
  pp.~15--35.

\bibitem{siu}
Y.-T. Siu, \emph{The $\bar{\partial}$-problem with uniform bounds on
  derivatives}, Math. Ann. \textbf{207} (1974), 163--176.

\bibitem{sor}
C.~Sormani, \emph{Busemann functions on manifolds with lower bounds on {R}icci
  curvature and minimal volume growth}, J. Diff. Geom. \textbf{48} (1998),
  557--585.

\bibitem{tianyau90}
G.~Tian and S.-T. Yau, \emph{Complete {K}\"ahler manifolds with zero {R}icci
  curvature, {I}}, J. Amer. Math. Soc. \textbf{3} (1990), 579--610.

\bibitem{tianyau91}
\bysame, \emph{Complete {K}\"ahler manifolds with zero {R}icci curvature,
  {II}}, Invent. Math. \textbf{106} (1991), 27--60.

\bibitem{V}
J.~Varouchas, \emph{Stabilit{\'e} de la classe des vari\'et\'es
  k\"ahl\'eriennes par certains morphismes propres}, Invent. Math. \textbf{77}
  (1984), 117--127.

\bibitem{wang89}
M.Y. Wang, \emph{Parallel spinors and parallel forms}, Ann. Global Anal. Geom.
  \textbf{7} (1989), 59--68.

\bibitem{yau78}
S.-T. Yau, \emph{On the {R}icci curvature of a compact {K}ähler manifold and
  the complex {M}onge-{A}mp\`{e}re equation, {I}}, Comm. Pure Appl. Math
  \textbf{31} (1978), 339--411.

\bibitem{yau:ICM78}
\bysame, \emph{The role of partial differential equations in differential
  geometry}, Proceedings of the International Congress of Mathematicians
  (Helsinki, 1978), Acad. Sci. Fennica, 1980, pp.~237--250.

\end{thebibliography}

\end{document}